\theoremstyle{plain}
\newtheorem*{conj}{Conjecture}
\newtheorem*{exercice}{Exercice :}
\newtheorem*{ex}{Example}
\newcommand{\E}{\ensuremath{\mathbb{S}\text{ets}}}
\newcommand{\CS}{\ensuremath{\mathbb{C}\mathbb{S}\text{ets}}}
\newcommand{\Cr}{\ensuremath{\mathbb{C}_\text{r}}}
\newcommand{\Csr}{\ensuremath{\mathbb{C}_\text{sr}}}
\newcommand{\I}{\ensuremath{\infty\text{-}}}
\newcommand{\CC}{\ensuremath{\infty\text{-}\mathbb{C}\mathbb{C}\text{AT}}}
\newcommand{\Cc}{\ensuremath{\infty\text{-}\mathbb{C}\mathbb{C}\text{at}}}
\newcommand{\Igp}{\ensuremath{\infty\text{-}\mathbb{C}\mathbb{G}\text{rp}}}
\newcommand*{\Etg}{\ensuremath{(\infty,0)\text{-}\mathbb{C}\mathbb{E}\text{tG}}}
\newcommand{\Mag}{\ensuremath{\mathbb{M}\text{ag}}}
\newcommand{\Cu}{\ensuremath{\mathbb{C}}}
\newcommand{\Trans}{\ensuremath{\mathbb{T}\text{rans}}}
\newcommand*{\Et}{\ensuremath{\infty\text{-}\mathbb{C}\mathbb{E}\text{tC}}}
\title{Aspects of Cubical Higher Category Theory} \author{Camell
  Kachour}
\begin{document}
\maketitle
\vspace*{3.5cm}
\begin{abstract}
 In this article\footnote{This article has been financially supported by the National French Program \textit{Aspie Friendly}.} we show how to build aspects of articles \cite{kamel,Cam,penon} but with the cubical geometry. Thus we define a monad on the category $\Cu\E$ of cubical sets which algebras are models of cubical weak $\infty$-categories. Also we define a monad on
 $\Cu\E$ which algebras are models of cubical weak $\infty$-groupoids with connections. And finally we define a monad on the 
 category\footnote{$\Cu\E^2$ means the cartesian product 
 $\Cu\E\times\Cu\E$, and $\Cu\E^n$ means the $n$-fold cartesian product of $\Cu\E$ with itself.} $\Cu\E^2$ which algebras are models of cubical weak $\infty$-functors, and a monad on the category 
 $\Cu\E^4$ which algebras are models of cubical weak natural $\infty$-transformations.
\end{abstract}
\begin{minipage}{118mm}{\small
    {\bf Keywords.} cubical weak $\infty$-groupoids with connections, homology theory, homotopy theory, computer sciences.\\
    {\bf Mathematics Subject Classification (2010).} 18B40,18C15, 18C20, 18G55,
    20L99, 55U35, 55P15.  }\end{minipage}

\hypersetup{%
  linkcolor=blue}%
\tableofcontents
\vspace*{1cm}

\vspace*{1cm}  

\section*{Introduction}

In this article we explain how to build algebraic models of 
\begin{itemize}
\item cubical weak $\infty$-categories with connections (see \ref{stretchings})
\item cubical weak $\infty$-groupoids with connections (see \ref{isostretchings})
\item cubical weak $\infty$-functors (see \ref{cub-funct}) 
\item cubical weak natural $\infty$-transformations (see \ref{cub-trans})
\end{itemize}

In particular cubical weak $(\infty,0)$-categories known as 
\textit{cubical weak $\infty$-groupoids} are very important for us because other models of cubical weak $\infty$-groupoids 
exist but are defined in an non-algebraic way \cite{bro2,bro1,bro-hig2,bro-hig1,bro-spen,dakin,salleh}, i.e defined as kind of 
\textit{cubical Kan complexes}.

As a matter of fact a very important feature of cubical higher category theory is their flexible possibility to have models of 
higher structures build by mimic simplicial methods for presheaves on the classical category $\Delta$, to presheaves
on the \textit{reflexive cubical category} $\mathbb{C}_r$ of cubical sets with connections (see \ref{reflexive_cubical_sets}), 
and in an other hand to have also models of higher structures build by mimic algebraic methods of the globular setting (see 
\cite{kamel,Cam,penon}). 

For this last point it is important to notice that cubical strict $\infty$-categories (see \ref{strict-infty}) are very close in nature to their
globular analogue : first datas of it are given by countable family of sets $(C_n)_{n\in\mathbb{N}}$, equipped both with kind of
sources and targets, and partial operations, and two kinds of \textit{reflexors} (that we call \textit{the classical reflexions} and the
\textit{connections}) on each set $C_n$, subject to axioms. See 
\cite{brown,bro-mosa,steiner}. 

Cubical sets have richer structure than globular sets, analogue to simplicial sets, and 
this richness allows to translate many definitions of simplicial higher category to cubical higher category 
(see \cite{antolini,dakin,porter}). But as we shall see, cubical higher category theory has the algebraic flexibility 
of globular higher category theory, which is a feature we have difficulty to see for simplicial higher category theory.
This important aspect of cubical higher category theory push us to see them as a bridge between simplicial
higher category theory and globular higher category theory. 

We believe that our models of cubical weak $\infty$-groupoids should opens new perspective 
to the Grothendieck conjecture on homotopy types of spaces \ref{isostretchings}, which is stated in the globular setting, and is as
follow :

\begin{conj}[Grothendieck]
The category of (some) models of globular weak $\infty$-groupoids is equipped with a Quillen model structure 
which is Quillen equivalent to the category of spaces equipped with its usual Quillen model structure, i.e those
which weak equivalences are given by the homotopy groups, and which fibrations are Serre fibrations.
\end{conj}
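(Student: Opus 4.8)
The plan is to route the comparison through the cubical models constructed in this paper, exploiting the bridge between the cubical and globular worlds stressed in the introduction. Write $T$ for the monad on $\Cu\E$ whose algebras are cubical weak $\infty$-groupoids (the $(\infty,0)$-case of \ref{isostretchings}), and let $F\dashv U\colon T\text{-}\mathrm{Alg}\to\Cu\E$ be the associated free--forgetful adjunction. The first step is to fix, on $\Cu\E$ or on the reflexive cubical sets (presheaves on $\Cr$), a cofibrantly generated model structure already known to be Quillen equivalent to spaces; for cubical sets with connections this is available through the theory of test categories, the structure being transported along the geometric realisation / singular cubical complex adjunction $\lvert\,\cdot\,\rvert\dashv\mathrm{Sing}\colon\Cu\E\rightleftarrows\mathbf{Top}$.

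The second step is to right-induce a model structure on $T\text{-}\mathrm{Alg}$ along $U$, declaring a morphism of cubical weak $\infty$-groupoids to be a weak equivalence or a fibration exactly when $U$ sends it to one. Here one must verify the transfer hypotheses: $T\text{-}\mathrm{Alg}$ is locally presentable because $T$ is accessible, so it remains to check the acyclicity condition, namely that relative $FJ$-cell complexes (with $J$ the generating trivial cofibrations of $\Cu\E$) are sent by $U$ to weak equivalences. The natural route is to produce an explicit path object and fibrant replacement on $T\text{-}\mathrm{Alg}$ whose fibrant objects are the Kan-type cubical weak $\infty$-groupoids, so that the usual small-object and Ken Brown arguments apply.

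The third step is to show that $F\dashv U$ is itself a Quillen equivalence, equivalently that the unit $X\to UFX$ is a weak equivalence of cubical sets for every cofibrant $X$; this says that freely adjoining the weak $\infty$-groupoid operations does not alter the homotopy type, so that $T\text{-}\mathrm{Alg}$ and $\Cu\E$ present the same homotopy category. Finally one constructs the comparison between the cubical and globular algebraic models --- a Quillen adjunction induced by the change of shape between cubes and globes, whose underlying functors implement cubification and globularisation --- and shows it is a Quillen equivalence. Composing the three Quillen equivalences, globular $\simeq$ cubical $\simeq$ cubical sets $\simeq$ spaces, then yields the statement.

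The hard part will be the acyclicity condition of the second step together with the unit estimate of the third: both encode precisely the claim that algebraic cubical weak $\infty$-groupoids model all homotopy types, which is the entire content of the homotopy hypothesis and is the reason the assertion is offered as a \emph{conjecture} rather than a theorem. In practice the obstruction concentrates in controlling the fibrant replacement, i.e. in verifying that the generating trivial cofibrations remain weak equivalences after the nontrivial free-algebra functor is applied, since this is where the combinatorics of the connections and of the coherence cells interact in an as yet uncontrolled way.
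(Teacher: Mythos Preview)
The paper does not prove this statement: it is explicitly labelled a \emph{conjecture} and is quoted in the introduction only as motivation for studying cubical weak $\infty$-groupoids. There is therefore no proof in the paper to compare your proposal against.

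Your outline is a reasonable high-level strategy, and you are right to flag that the acyclicity condition for the transferred model structure and the weak-equivalence property of the unit $X\to UFX$ are precisely where the difficulty lies; these are the open steps that make the assertion a conjecture rather than a theorem. But as a \emph{proof} the proposal does not close the gap: each of the three claimed Quillen equivalences (globular $\simeq$ cubical algebras, cubical algebras $\simeq$ cubical sets, and the transfer itself) is asserted rather than established, and the last paragraph of your own text concedes this. In particular, the comparison between globular and cubical algebraic weak $\infty$-groupoids is only suspected in the paper (see the remark following Definition~\ref{weak-def}), not proved, and the existence of the relevant model structures on the algebra categories is likewise only conjectural there. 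So what you have written is a plausible roadmap, not a proof, and the paper itself offers nothing further on this point.
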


Finally it is important to notice that cubical strict higher structures have already applications
and impacts in homology \cite{ashley,bro-hig3,bro-hig2,bro-hig1} and in algebraic topology
\cite{bro-lod,gilbert,porter}. The use of connections with simplicial method can be found in 
\cite{antolini,dakin,porter}.

\textit{Cubical Stretchings} are the main tools of this article : the first stretchings occurred in the work \cite{penon} of 
Jacques Penon in 1999 under the globular paradigm; they are example of \textit{globular stretchings} and were the central tools to build 
algebraic models of weak $\infty$-categories in Penon's sense. Let us briefly recall how they weakened 
algebraic structures; five ingredients build them : 
\begin{itemize}
\item A projectively sketchable category $\mathcal{G}$ which presheaves\footnote{Usually $\mathcal{G}$ is just 
equipped with the empty cone.} formalize a given 
geometry\footnote{Globular, cubical, multiple, etc.}, i.e where a notion of dimension is involved 
and where the objects of the category of elements of any of its presheaves are called cells.
\item A magmatic higher structure based on $\mathcal{G}$ with chosen partial operations; 
\item A strict higher structure based on $\mathcal{G}$, which in fact is a magmatic higher structure
with chosen equations build with its operations.
\item A morphism of magmas between a magmatic higher structure and a strict higher 
structure based on the geometry of $\mathcal{G}$\footnote{Both in the same kind of higher structure like $\mathcal{G}$.}.
A magma means a structure without equations.
\item A family of partial operations, called \textit{bracketings}, which build coherences (or homotopy) between
two cells of its underlying magma, for those couple of cells with the same dimension and with some other extra conditions.
\end{itemize}
With its bracketing operations a stretching fill its underlying magma with coherence cells, and this filling is controlled\footnote{It is 
similar to cofibrantly generated weak factorization system where the small object argument of Kan-Quillen
shows that a given factorization is build by using a sequence controlled by a small set of maps.}
by its strict structure. Thus it equipped its underlying magma with a weakened version of its underlying strict higher 
structure. The category $\mathbb{S}\text{tr}$ of such stretchings is all the time projectively sketchable 
and one of its sketch $\mathcal{E}$ countains the projective sketch 
$\mathcal{E}'$ of $\mathcal{G}$ :
\begin{tikzcd}
\mathcal{E}'\arrow[r,"i",hook]&\mathcal{E}
\end{tikzcd}, 
thus the induced functor\footnote{This induced functor $i^{\star}$ has no reasons to be monadic.} 
\begin{tikzcd}
\mathbb{M}\text{od}(\mathcal{E})\arrow[r,"i^{\star}"]&\mathbb{M}\text{od}(\mathcal{E}')
\end{tikzcd} 
is a right adjoint \cite{foltz}, and we get a monad $\mathbb{W}$ on $\mathbb{M}\text{od}(\mathcal{E}')$. 
$\mathbb{W}$-algebras are models of the weakened version of its corresponding strict higher structures.
The free algebraic structures are the free stretchings on objects of $\mathcal{G}$, and they are our basic weak 
higher structure which weakened the strict higher structures involved. In \cite{penon} it is shown that free
categorical stretchings on globular sets can be described by using basic method of logic, i.e constructing 
terms with an adapted language to the globular geometry which is defined with operations such as the $\circ^{n}_{p}$ and the 
bracketings $[-;-]$; it has no variables and has only constants built out with globular sets. Such method
can be apply to build free stretchings of this article, but it is quite technical and goes beyond the
scope of this article. The category of strict higher structures of such stretchings 
is the category of algebras for a monad $\mathbb{S}$ on $\mathcal{G}$. The monad $\mathbb{W}$ is 
a kind of (or weakly equivalent to) cofibrant replacement of the monad $\mathbb{S}$.
A true advantage of weakened with stretchings is it doesn't need the cartesianity
of the monad $\mathbb{S}$ of its underlying strict higher structures. But it seems that the corresponding
monad $\mathbb{W}$ could be an operad as it is shown in \cite{bat-pen}. Some good other features of the monad 
$\mathbb{W}$ is it preserves $\alpha$-filtered colimits
where $\alpha$ is a cardinal which bound the size of each cones of both sketches $\mathcal{E}$ and $\mathcal{E}'$.
This shows that the category $\mathbb{W}\text{-}\mathbb{A}\text{lg}$ of $\mathbb{W}$-algebras is locally 
presentable. 

This article is mainly concerned with \textit{cubical stretchings} and \textit{cubical higher structures} they provide. 
It is devoted to several work which main steps are as follow :
\begin{itemize}

\item We define our own terminology in order to be as close as possible to the notation of the globular environment, and in it we define the monad of cubical strict $\infty$-categories on 
the category of cubical sets\footnote{Cubical sets in our terminology are the \textit{precubical sets} of Richard Steiner \cite{steiner}.} 

\item We define the category of cubical categorical stretchings which is the cubical anologue of the category of
globular categorical stretchings of \cite{penon}. The key ingredient is a cubical analogue of the globular contractions
build in \cite{penon}. Then we give a monad $\mathbb{W}$ on the category of cubical sets which 
algebras are our models of cubical weak $\infty$-categories. This monad is the cubical analogue of the 
monad $\mathbb{P}^{0}_C$ in \cite{penon}, which $\mathbb{P}^{0}_C$-algebras are the globular weak $\infty$-categories of Penon. 

\item We start to define cubical $(\infty,0)$-sets, which main tools are the \textit{cubical reversors}. These are
the cubical analogue of the globular $(\infty,0)$-sets\footnote{Also called $(\infty,0)$-graphs in \cite{Cam}}
which has been defined in \cite{Cam}. More precisely they are build by using cubical analogue of minimal 
$(\infty,0)$-structures in the sense of \cite{Cam}. Then we define the category of cubical reflexive $(\infty,0)$-magmas 
and then the category of cubical $(\infty,0)$-groupoidal stretchings, which is the cubical analogue of the globular 
$(\infty,0)$-categorical stretchings which has been defined in \cite{Cam}. The add of this new sketches inside 
cubical stretchings allows us to build 
a monad on the category of cubical sets which algebras are our models of cubical weak $\infty$-groupoids. 
This monad is the cubical analogue of the monad on globular sets build in \cite{Cam} and
which algebras are globular models of weak $\infty$-groupoids.

\item In the sections \ref{cub-funct} and \ref{cub-trans} we extend globular weak 
$\infty$-functors and globular weak $\infty$-natural transformations
to the cubical setting. In particular we shall see that the monad of 
cubical weak $\infty$-functors act on the category 
$\Cu\E\times\Cu\E$ and the monad of cubical weak natural
$\infty$-transformations act on the category 
$\Cu\E\times\Cu\E\times\Cu\E\times\Cu\E$. In these 
last sections some interesting internal $2$-cubes appears in
$\CC$ which actually are all classical cubical strict $2$-categories. We finish our article
by sketching an expected cocubical object of monads which leads to an operadic approach
of the cubical weak $\infty$-category of cubical weak $\infty$-categories.
\end{itemize} 

In \cite{penon,kamel,Cam} some computations have been described for globular higher structures born 
with globular stretchings. For example in \cite{penon} it is proved that in dimension $2$, globular weak $\infty$-categories
of Penon are bicategories; in \cite{kamel} it is proved that in dimension $2$ globular weak $\infty$-functors are 
pseudo-$2$-functors. In \ref{computations} we gave a precise definition of the dimension for algebras of 
our models of cubical weak $\infty$-categories. Computations in dimension $2$ lead to long computations and go beyond 
the scope of this article, but the reader interested in the dimension $2$ can verify that our models of dimension $2$ and 
without connections are cubical bicategories \cite{verity-thesis} in the sense of Verity \ref{computations} (which is our terminology for weak double categories); however in \ref{example-coherence}
we exhibited an example of cubical coherence cell\footnote{In \cite{camark-cub-1} we exhibited the same kind of $2$-cell for the cubical 
operad of cubical weak $\infty$-categories. Globular stretchings and globular operads are quite close in nature (for example they 
share the same notion of contractions) and for computations \cite{bat-pen}.} in dimension $2$. 

In this article the reader has to be aware to not confuse $(\infty,m)$-structures and $(n,\infty)$-structures : 
$(\infty,m)$-structures as described in \ref{isostretchings} are kind of higher categories with invertible cells; $(n,\infty)$-structures 
as described in \ref{cub-funct} \ref{cub-trans} lead to $n$-cells in the cubical weak $\infty$-category of cubical weak $\infty$-categories\footnote{We discuss some important progress for its existence in \cite{camark-cub-2}.}; for example $(0,\infty)$-structures 
in \ref{cub-funct} means all higher structures surrounded cubical weak $\infty$-functors, and $(1,\infty)$-structures in 
\ref{cub-trans} means all higher structures which surround cubical weak $\infty$-natural transformations.
   
Two sketches are \textit{Morita equivalent} 
if their category of models are equivalent. Also when we write "sketch of a category $\mathcal{C}$", we mean that this category 
$\mathcal{C}$ is projectively sketchable, and that a sketch of it has been fixed up to Morita equivalence. 

{\bf Acknowledgement.} 
I thank mathematicians of the AGA\footnote{Arithmétique et Géométrie Algébrique, LMO, Paris sud.}, and the good ambience provided by the team in the lab, especially 
I want to mention Elisabeth Gassiat, Olivier Schiffmann, Christophe Breuil, Valentin Hernandez, Philip Boalch, 
François Charles and Benjamin Hennion. I want also to mention mathematicians who help me a lot :
 Maxim Kontsevich, Pierre Cartier, Ronnie Brown, Vasily Pestun, Jordi Lopez-Abad, Bertrand Monthubert, Ghislain Rémy, Michael Batanin, Ross Street
 and Mark Weber. I thank mathematicians of the IRIF where I started to write this paper\footnote
 {Three months have been financially supported (December 2016 until February 2017) under an European Research Council Project called \textit{Duall} : \url{https://www.irif.fr/~mgehrke/DuaLL.htm.}}. Especially
 I want to mention Paul-André Méllies, Mai Gehrke, Pierre Louis Curien and Thomas Ehrhard.
 
 Finally I want to mention other wonderful persons : 
 Stèf Bonnot-Briey, Frédérique Vidal, Sophie Cluzel, Jean-Philippe Bourgoin, Alexandra Van Cauteren, 
 Françoise Dorocq and 
 Jean-Pierre Ledru. 
 
 I dedicate this work to Ronnie Brown.

\section{Cubical sets}

Cubical sets are presheaf on a specific small category $\mathbb{C}$ \ref{cub-sets}. This small
category contains the combinatorics underlying the geometric idea of cubes and higher
cubes. The following internal cocubical complex in $\mathbb{T}\text{op}$ where $I=[0,1]$ is the usual interval in 
$\mathbb{R}$ and where $I^{n}$ is the product of $I$ with itself $n$-times \cite{brown-book} :

 $$\begin{tikzcd}
 I^{0}\arrow[rr, yshift=1.5ex,"s^{0}_{1}"]
 \arrow[rr, yshift=-1.5ex,"t^{0}_{1}"{below}]
 &&I^{1} 
\arrow[rr, yshift=1.5ex,"s^{2}_{1,1}"]
\arrow[rr, yshift=-1.5ex,"t^{2}_{1,1}"{below}]
\arrow[rr, yshift=4.5ex,"s^{2}_{1,2}"] 
\arrow[rr, yshift=-4.5ex,"t^{2}_{1,2}"{below}]   
        && I^{2} 
         \arrow[rr, yshift=1.5ex,"s^{3}_{2,1}"]
        \arrow[rr, yshift=-1.5ex,"t^{3}_{2,1}"{below}]
        \arrow[rr, yshift=4.5ex,"s^{3}_{2,2}"] 
        \arrow[rr, yshift=-4.5ex,"t^{3}_{2,2}"{below}]
        \arrow[rr, yshift=7.5ex,"s^{3}_{2,3}"] 
        \arrow[rr, yshift=-7.5ex,"t^{3}_{2,3}"{below}]&& 
         I^{3} 
        \arrow[rr, yshift=1.5ex,"s^{4}_{3,4}"]
        \arrow[rr, yshift=-1.5ex,"t^{4}_{3,4}"{below}]
        \arrow[rr, yshift=4.5ex,"s^{4}_{3,3}"] 
        \arrow[rr, yshift=-4.5ex,"t^{4}_{3,3}"{below}]
        \arrow[rr, yshift=7.5ex,"s^{4}_{3,2}"] 
        \arrow[rr, yshift=-7.5ex,"t^{4}_{3,2}"{below}]        
        \arrow[rr, yshift=10.5ex,"s^{4}_{3,1}"] 
        \arrow[rr, yshift=-10.5ex,"t^{4}_{3,1}"{below}] && 
        I^{4}\cdots I^{n-1}  
\arrow[rr, yshift=1.5ex,"s^{n}_{n-1,n-1}"]
        \arrow[rr, yshift=-1.5ex,"t^{n}_{n-1,n-1}"{below}]
        \arrow[rr, yshift=4.5ex,dotted] 
        \arrow[rr, yshift=-4.5ex,dotted]
        \arrow[rr, yshift=7.5ex,"s^{n}_{n-1,i}"] 
        \arrow[rr, yshift=-7.5ex,"t^{n}_{n-1,i}"{below}]        
        \arrow[rr, yshift=10.5ex,dotted] 
        \arrow[rr, yshift=-10.5ex,dotted]    
  \arrow[rr, yshift=13.5ex,"s^{n}_{n-1,1}"] 
        \arrow[rr, yshift=-13.5ex,"t^{n}_{n-1,1}"{below}]&& I^{n}\cdots    
  \end{tikzcd}$$
is an archetypal example of shape modelised by $\mathbb{C}^{op}$. 
Here 
\textit{sources} $\xymatrix{\underline{n-1}\ar[rr]^{s^{n}_{n-1,j}}&&\underline{n}}$ for each $j\in\{1,..,n\}$ and \textit{targets}  
$\xymatrix{\underline{n-1}\ar[rr]^{t^{n}_{n-1,j}}&&\underline{n}}$ for each $j\in\{1,..,n\}$ such that for $1\leq i<j\leq n$, follow
the cocubical relations

\begin{enumerate}[(i)]
\item $ s^{n}_{n-1,j}\circ s^{n-1}_{n-2,i}=s^{n}_{n-1,i}\circ s^{n-1}_{n-2,j-1},$
\item $t^{n}_{n-1,j}\circ s^{n-1}_{n-2,i}=s^{n}_{n-1,i}\circ t^{n-1}_{n-2,j-1},$
\item $s^{n}_{n-1,j}\circ t^{n-1}_{n-2,i}=t^{n}_{n-1,i}\circ s^{n-1}_{n-2,j-1},$
\item $t^{n}_{n-1,j}\circ t^{n-1}_{n-2,i}=t^{n}_{n-1,i}\circ t^{n-1}_{n-2,j-1}$,
\end{enumerate}

which are the dual relations described below in \ref{cub-sets}. This cocubical
complex is used in \cite{camark-cub-2} to build the functor of fundamental
cubical weak $\infty$-groupoids for spaces\footnote{It is a $\mathbb{B}^{0}_C$-coalgebra, 
where $\mathbb{B}^{0}_C$ is the cubical operad of cubical weak $\infty$-categories \cite{camark-cub-1}.}.
More references on cubical sets can be found in \cite{antolini,grandis-maur}.
\subsection{The cubical category}
\label{cub-sets}

Consider the small category $\mathbb{C}$ with integers $\underline{n}\in\mathbb{N}$ as objects. 
Generators for $\mathbb{C}$ are, for all $\underline{n}\in\mathbb{N}$
given by \textit{sources} $\xymatrix{\underline{n}\ar[rr]^{s^{n}_{n-1,j}}&&\underline{n-1}}$ for each $j\in\{1,..,n\}$, and 
by \textit{targets} $\xymatrix{\underline{n}\ar[rr]^{t^{n}_{n-1,j}}&&\underline{n-1}}$ for each $j\in\{1,..,n\}$ such that for 
$1\leq i<j\leq n$ we have the following cubical relations 

\begin{enumerate}[(i)]

\item $ s^{n-1}_{n-2,i}\circ s^{n}_{n-1,j}=s^{n-1}_{n-2,j-1}\circ s^{n}_{n-1,i},$
\item $s^{n-1}_{n-2,i}\circ t^{n}_{n-1,j}=t^{n-1}_{n-2,j-1}\circ s^{n}_{n-1,i},$
\item $t^{n-1}_{n-2,i}\circ s^{n}_{n-1,j}=s^{n-1}_{n-2,j-1}\circ t^{n}_{n-1,i},$
\item $t^{n-1}_{n-2,i}\circ t^{n}_{n-1,j}=t^{n-1}_{n-2,j-1}\circ t^{n}_{n-1,i}$
\end{enumerate}

These generators plus these relations give the small category $\mathbb{C}$ called the \textit{cubical category} that we 
may represent schematically with the low dimensional diagram :

$$\begin{tikzcd}
  \cdots C_{4}  \arrow[rr, yshift=1.5ex,"s^{4}_{3,1}"]
  \arrow[rr, yshift=-1.5ex,"t^{4}_{3,1}"{below}]
  \arrow[rr, yshift=4.5ex,"s^{4}_{3,2}"] \arrow[rr, yshift=-4.5ex,"t^{4}_{3,2}"{below}]
  \arrow[rr, yshift=7.5ex,"s^{4}_{3,3}"] 
  \arrow[rr, yshift=-7.5ex,"t^{4}_{3,3}"{below}]
  \arrow[rr, yshift=10.5ex,"s^{4}_{3,4}"] 
  \arrow[rr, yshift=-10.5ex,"t^{4}_{3,4}"{below}]      && 
  C_{3} \arrow[rr, yshift=1.5ex,"s^{3}_{2,1}"]
  \arrow[rr, yshift=-1.5ex,"t^{3}_{2,1}"{below}]
  \arrow[rr, yshift=4.5ex,"s^{3}_{2,2}"] 
  \arrow[rr, yshift=-4.5ex,"t^{3}_{2,2}"{below}]
  \arrow[rr, yshift=7.5ex,"s^{3}_{2,3}"] 
  \arrow[rr, yshift=-7.5ex,"t^{3}_{2,3}"{below}]
      && C_{2} 
      \arrow[rr, yshift=1.5ex,"s^{2}_{1,1}"]
      \arrow[rr, yshift=-1.5ex,"t^{2}_{1,1}"{below}]
      \arrow[rr, yshift=4.5ex,"s^{2}_{1,2}"] 
      \arrow[rr, yshift=-4.5ex,"t^{2}_{1,2}"{below}]
      && C_{1} \arrow[rr, yshift=1.5ex,"s^{1}_{0}"]\arrow[rr, yshift=-1.5ex,"t^{1}_{0}"{below}]   
        && C_{0}
\end{tikzcd}$$
and this category $\Cu$ gives also the sketch $\mathcal{E}_\text{S}$ of cubical sets used
especially in \ref{monad-strict}, \ref{stretchings} and \ref{isostretchings} to produce the monads 
$\mathbb{S}=(S,\lambda,\mu)$, $\mathbb{W}=(W,\eta,\nu)$ and
$\mathbb{W}^m=(W^m,\eta^m,\nu^m)$ on $\Cu\E$, which algebras are respectively cubical strict $\infty$-categories,
cubical weak $\infty$-categories and cubical weak $(\infty,m)$-categories.

\begin{definition}
\label{CS}
The category of cubical sets $\CS$ is the category of presheaves $[\mathbb{C};\E]$. The terminal cubical set is denoted $1$.
\end{definition}
Occasionally a cubical set shall be denoted with the notation 
$$\mathcal{C}=(C_n,s^{n}_{n-1,j},t^{n}_{n-1,j})_{1\leq j\leq n,\hspace{.1cm}n\in\mathbb{N}}$$
in case
we want to point out its underlying structures.

\subsection{Reflexive cubical sets}
\label{reflexive_cubical_sets}
Reflexivity for cubical sets are of two sorts : one is "classical" in the sense that they are very similar to their globular analogue;
thus we shall use the notation $(1^{n}_{n+1,j})_{n\in\mathbb{N}, j\in\{1,..,n\}}$ to denote these maps 
$\xymatrix{C(n)\ar[rr]^{1^{n}_{n+1,j}}&&C(n+1)}$ which formally behave like globular reflexivity (\cite{Cam}); the others are called \textit{connections} and are given by maps $\xymatrix{C(n)\ar[rr]^{\Gamma}&&C(n+1)}$ where the notation
using the greek letter \textit{"Gamma"} seems to be the usual notation. However we do prefer to use instead the notation 
$\xymatrix{C(n)\ar[r]^(.45){1^{n,\gamma}_{n+1,j}}&C(n+1)}$ ($\gamma\in\{+,-\}$) in order to point out
the reflexive nature of connections.

Consider the cubical category $\mathbb{C}$. For all $n\in\mathbb{N}$ we add in it generators 
$\xymatrix{\underline{n-1}\ar[r]^(.55){1^{n-1}_{n,j}}&\underline{n}}$ for each $j\in\{1,..,n\}$ subject
to the relations :
\begin{enumerate}[(i)]
\item $1^{n}_{n+1,i}\circ 1^{n-1}_{n,j}= 1^{n}_{n+1,j+1}\circ 1^{n-1}_{n,i} \hspace{.2cm}\text{if } \hspace{.2cm}1 \leq i\leq j\leq n$;
\item $ s^{n}_{n-1,i}\circ 1^{n-1}_{n,j}= 1^{n-2}_{n-1,j-1}\circ s^{n-1}_{n-2,i}\hspace{.2cm} \text{if } \hspace{.2cm}1\leq i<j\leq n$;
\item $ s^{n}_{n-1,i}\circ 1^{n-1}_{n,j}= 1^{n-2}_{n-1,j}\circ s^{n-1}_{n-2,i-1}\hspace{.2cm} \text{if } \hspace{.2cm} 1\leq j<i\leq n$; 
\item $s^{n}_{n-1,i}\circ 1^{n-1}_{n,j} = id(\underline{n-1}) \hspace{.2cm}\text{if }\hspace{.2cm}  i=j$.
\end{enumerate}

and

\begin{enumerate}[(i)]
\item $1^{n}_{n+1,i}\circ 1^{n-1}_{n,j}=1^{n}_{n+1,j+1}\circ 1^{n-1}_{n,i} \hspace{.2cm}\text{if }\hspace{.2cm}1 \leq i\leq j\leq n$;
\item $t^{n}_{n-1,i}\circ 1^{n-1}_{n,j}$=$1^{n-2}_{n-1,j-1}\circ t^{n-1}_{n-2,i}\hspace{.2cm} \text{if } \hspace{.2cm}1\leq i<j\leq n$;
\item $ t^{n}_{n-1,i}\circ 1^{n-1}_{n,j}=1^{n-2}_{n-1,j}\circ t^{n-1}_{n-2,i-1}\hspace{.2cm} \text{if } \hspace{.2cm} 1\leq j<i\leq n$; 
\item $t^{n}_{n-1,i}\circ 1^{n-1}_{n,j} =id(\underline{n-1}) \hspace{.2cm}\text{if }\hspace{.2cm}  i=j$.
\end{enumerate}

These generators and relations give the small category $\Csr$ called the \textit{semireflexive cubical category} where 
a quick look at its underlying semireflexive structure is given by the following diagram :

$$\begin{tikzcd}
  C_{0} \arrow[rr,"1^{0}_{1}"]
    && C_{1}  \arrow[rr, yshift=1.5ex,"1^{1}_{2,1}"]
        \arrow[rr, yshift=-1.5ex,"1^{1}_{2,2}"]
&& C_{2}  \arrow[rr,"1^{2}_{3,1}"]
\arrow[rr, yshift=3.5ex,"1^{2}_{3,2}"] 
\arrow[rr, yshift=-3.5ex,"1^{2}_{3,3}"]
        && 
        C_{3}\arrow[rr, yshift=1.5ex,"1^{3}_{4,1}"] 
        \arrow[rr, yshift=-1.5ex,"1^{3}_{4,2}"]
        \arrow[rr, yshift=5.5ex,"1^{3}_{4,3}"] 
        \arrow[rr, yshift=-5.5ex,"1^{3}_{4,4}"]
        &&C_{4}\cdots
\end{tikzcd}$$

\begin{definition}
The category of semireflexive cubical sets $\Csr\E$ is the category of presheaves $[\Csr;\E]$. 
The terminal semireflexive cubical set is denoted $1_\text{sr}$
\end{definition}

Consider the semireflexive cubical category $\Csr$. For all integers $n\geq 1$ we add in it generators 
$\xymatrix{\underline{n-1}\ar[r]^(.6){1^{n-1,\gamma}_{n,j}}&\underline{n}}$ for each $j\in\{1,..,n-1\}$ subject
to the relations :
\begin{enumerate}[(i)]

\item for $1\leq j<i\leq n$, $1^{n,\gamma}_{n+1,i}\circ 1^{n-1,\gamma}_{n,j}=1^{n,\gamma}_{n+1,j+1}\circ 1^{n-1,\gamma}_{n,i}$;

\item for $1\leq i\leq n-1$, $1^{n,\gamma}_{n+1,i}\circ 1^{n-1,\gamma}_{n,i}=1^{n,\gamma}_{n+1,i+1}\circ 1^{n-1,\gamma}_{n,i}$;  

\item for $1\leq i, j\leq n$, 
$\left\{
\begin{array}{rl}
  1^{n,\gamma}_{n+1,i}\circ 1^{n-1}_{n,j} &= 1^{n}_{n+1,j+1}\circ 1^{n-1,\gamma}_{n,i} \quad\text{if}\quad1\leq i<j\leq n\\   
 &= 1^{n}_{n+1,j} \circ 1^{n-1,\gamma}_{n,i-1}\hspace{.2cm}\text{if} \hspace{.2cm}1\leq j<i\leq n
 \end{array}
\right.$;

\item for $1\leq j\leq n$, $1^{n,\gamma}_{n+1,j}\circ 1^{n-1}_{n,j}=1^{n}_{n+1,j}\circ 1^{n-1}_{n,j}$; 

\item for $1\leq i, j\leq n$, 

$\left\{
\begin{array}{rl}
  s^{n}_{n-1,i}\circ 1^{n-1,\gamma}_{n,j}&= 1^{n-2,\gamma}_{n-1,j-1}\circ s^{n-1}_{n-2,i}\hspace{.2cm}\text{if}\hspace{.2cm} 1\leq i<j\leq n-1  \\
 &= 1^{n-2,\gamma}_{n-1,j}\circ s^{n-1}_{n-2,i-1} \hspace{.2cm}\text{if}\hspace{.2cm} 2\leq j+1<i \leq n
  \end{array}
\right.$; 

and

$\left\{
\begin{array}{rl}
  t^{n}_{n-1,i}\circ 1^{n-1,\gamma}_{n,j}&= 1^{n-2,\gamma}_{n-1,j-1}\circ t^{n-1}_{n-2,i}\hspace{.2cm}\text{if}\hspace{.2cm} 1\leq i<j\leq n-1  \\
 &= 1^{n-2,\gamma}_{n-1,j}\circ t^{n-1}_{n-2,i-1} \hspace{.2cm}\text{if}\hspace{.2cm} 2\leq j+1<i \leq n
  \end{array}
\right.$;\\

\item for $1\leq j\leq n-1$, $s^{n}_{n-1,j}\circ 1^{n-1,-}_{n,j}=s^{n}_{n-1,j+1}\circ 1^{n-1,-}_{n,j}=1_{\underline{n-1}}$
and $t^{n}_{n-1,j}\circ 1^{n-1,+}_{n,j}=t^{n}_{n-1,j+1}\circ 1^{n-1,+}_{n,j}=1_{\underline{n-1}}$; 

\item for $1\leq j\leq n-1$, $s^{n}_{n-1,j}\circ 1^{n-1,+}_{n,j}= s^{n}_{n-1,j+1}\circ 1^{n-1,+}_{n,j}= 1^{n-2}_{n-1,j}\circ s^{n-1}_{n-2,j}$;

\item for $1\leq j\leq n-1$, $t^{n}_{n-1,j}\circ 1^{n-1,-}_{n,j}= t^{n}_{n-1,j+1}\circ 1^{n-1,-}_{n,j}= 1^{n-2}_{n-1,j}\circ t^{n-1}_{n-2,j}$.

\end{enumerate}

These generators and relations give the small category $\mathbb{C}_{r}$ called the 
\textit{reflexive cubical category} and in it, connections have the following shape :

$$\begin{tikzcd}
 C_{1} \arrow[rr, yshift=1.5ex,"1^{1,-}_{2,1}"]
 \arrow[rr, yshift=-1.5ex,"1^{1,+}_{2,1}"{below}]
 && C_{2} 
\arrow[rr, yshift=1.5ex,"1^{2,-}_{3,1}"]
\arrow[rr, yshift=-1.5ex,"1^{2,+}_{3,1}"{below}]
\arrow[rr, yshift=4.5ex,"1^{2,-}_{3,2}"] 
\arrow[rr, yshift=-4.5ex,"1^{2,+}_{3,2}"{below}]   
        && C_{3}
        \arrow[rr, yshift=1.5ex,"1^{3,-}_{4,1}"]
        \arrow[rr, yshift=-1.5ex,"1^{3,+}_{4,1}"{below}]
        \arrow[rr, yshift=4.5ex,"1^{3,-}_{4,2}"] 
        \arrow[rr, yshift=-4.5ex,"1^{3,+}_{4,2}"{below}]
        \arrow[rr, yshift=7.5ex,"1^{3,-}_{4,3}"] 
        \arrow[rr, yshift=-7.5ex,"1^{3,+}_{4,3}"{below}]&& 
        C_{3} 
        \arrow[rr, yshift=1.5ex,"1^{4,-}_{5,1}"]
        \arrow[rr, yshift=-1.5ex,"1^{4,+}_{5,1}"{below}]
        \arrow[rr, yshift=4.5ex,"1^{4,-}_{5,2}"] 
        \arrow[rr, yshift=-4.5ex,"1^{4,+}_{5,2}"{below}]
        \arrow[rr, yshift=7.5ex,"1^{4,-}_{5,3}"] 
        \arrow[rr, yshift=-7.5ex,"1^{4,+}_{5,3}"{below}]        
        \arrow[rr, yshift=10.5ex,"1^{4,-}_{5,4}"] 
        \arrow[rr, yshift=-10.5ex,"1^{4,+}_{5,4}"{below}] && 
        C_{4}\cdots          
\end{tikzcd}$$

\begin{definition}
The category of reflexive cubical sets $\Cr\E$ is the category of presheaves $[\Cr;\E]$. 
The terminal reflexive cubical set is denoted $1_\text{r}$
\end{definition}

It is important to note that this small category $\Cr$ is a \textit{strict test category} \cite{malts}, that
is the category $\Cr\E$ of reflexive cubical sets can be equipped with a genious Quillen model structure
which is Quillen equivalent to the category of spaces equipped with its classical
Quillen model structure \cite{hirschhorn}.

\section{The category of strict cubical $\infty$-categories}
\label{strict-infty}
Cubical strict $\infty$-categories have been studied in \cite{brown,bro-mosa,steiner}. In \cite{brown} the authors proved that 
the category of cubical strict $\infty$-categories with cubical strict $\infty$-functors as 
morphisms is equivalent to the category of globular strict $\infty$-categories with globular
strict $\infty$-functors as morphisms. 

These higher structures are very
important for us because they are used in \ref{stretchings} to control the cubical coherences cells that we need to add in
reflexive cubical $\infty$-magmas in order to be our basic cubical weak $\infty$-categories : in fact they allow to 
build free cubical weak $\infty$-categories for each cubical sets. The primary structure behind cubical strict $\infty$-categories
are cubical $\infty$-magmas that we are going to define quickly. 

Consider a cubical reflexive set 
$$(C,(1^{n}_{n+1,j})_{n\in\mathbb{N},j\in\llbracket 1,n+1 \rrbracket}, (1^{n,\gamma}_{n+1,j})_{n\geq 1,j\in\llbracket 1,n \rrbracket})$$
equipped with partial operations 
$(\circ^{n}_{j})_{n\geq 1, j\in\llbracket 1,n \rrbracket}$ 
where if $a, b \in C(n)$ then $a\circ^{n}_{j}b$ is defined for $j\in\{1,...,n\}$ if 
$s^{n}_{j}(b)=t^{n}_{j}(a)$. We also require these operations to follow the following axioms of positions :

\begin{enumerate}[(i)]

\item For $1\leq j\leq n$ we have : $s^{n}_{n-1,j}(a\circ^{n}_{j}b)=s^{n}_{n-1,j}(a)$ and $t^{n}_{n-1,j}(a\circ^{n}_{j}b)=t^{n}_{n-1,j}(a)$,\\

\item $s^{n}_{n-1,i}(a\circ^{n}_{j}b)=\left\{
\begin{array}{rl}
  s^{n}_{n-1,i}(a)\circ^{n-1}_{j-1}s^{n}_{n-1,i}(b) \hspace{.1cm}\text{if} \hspace{.1cm}1\leq i<j\leq n \\
  s^{n}_{n-1,i}(a)\circ^{n-1}_{j}s^{n}_{n-1,i}(b)\hspace{.1cm}\text{if}\hspace{.1cm}1\leq j<i\leq n
 \end{array}
\right.$ 

\item $t^{n}_{n-1,i}(a\circ^{n}_{j}b)=\left\{
\begin{array}{rl}
  t^{n}_{n-1,i}(a)\circ^{n-1}_{j-1}t^{n}_{n-1,i}(b)\hspace{.1cm}\text{if}\hspace{.1cm}1\leq i<j\leq n \\
  t^{n}_{n-1,i}(a)\circ^{n-1}_{j}t^{n}_{n-1,i}(b)\hspace{.1cm}\text{if}\hspace{.1cm}1\leq j<i\leq n
 \end{array}
\right.$ 
\end{enumerate}

\begin{definition}
Cubical $\infty$-magmas are cubical sets equipped with partial operations like above. A morphism between two cubical $\infty$-magmas is a morphism of their underlying
cubical sets which respects partial operations 
$(\circ^{n}_{j})_{n\geq 1, j\in\llbracket 1,n \rrbracket}$.The category of cubical $\infty$-magmas is noted $\I\Cu\Mag$
\end{definition}

The following sketch $\mathcal{E}_M$ of \textit{axioms of positions} as
above shall be used in \ref{monad-strict} to justify the existence 
of the monad on $\Cu\E$ of cubical strict $\infty$-categories. It is
important to notice that the sketch just below has only
\textit{one generation} which means that diagrams and cones
involved in it are not build with previous data of other diagrams
and cones. The terminology used in \cite{lair-triplabilite} is \textit{sketch with one
floor}\footnote{In French we say "\textit{esquisse à un étage}".}. See also \cite{laircoppey:esquisses}.

\begin{itemize}
\item For $1\leq i<j\leq n$ we consider the following two cones :

\begin{tikzcd}
M_n\underset{M_{n-1,j}}\times M_n\arrow[d,"\pi^{n}_{0,j}"{left}]
\arrow[rr,"\pi^n_{1,j}"]&&M_n\arrow[d,"s^{n}_{n-1,j}"]\\
M_n\arrow[rr,"t^{n}_{n-1,j}"{below}]&&M_{n-1}\end{tikzcd}
\qquad\begin{tikzcd}
M_{n-1}\underset{M_{n-2,j-1}}\times M_{n-1}\arrow[d,"\pi^{n-1}_{0,j-1}"{left}]
\arrow[rr,"\pi^{n-1}_{1,j-1}"]&&M_{n-1}\arrow[d,"s^{n-1}_{n-2,j-1}"]\\
M_{n-1}\arrow[rr,"t^{n-1}_{n-2,j-1}"{below}]&&M_{n-2}
\end{tikzcd}

 and the following commutative diagram 
(definition of $s^{n}_{n-1,i}\underset{j,j-1}\times s^{n}_{n-1,i}$)

\begin{tikzcd}
M_n\underset{M_{n-1,j}}\times M_n
\arrow[rrr,"\pi^n_{1,j}"]\arrow[dd,"\pi^n_{0,j}"{left}]
\arrow[rrd,dotted,"s^{n}_{n-1,i}\underset{j,j-1}\times s^{n}_{n-1,i}"]&&&M_n
\arrow[rrd,"s^{n}_{n-1,i}"]\\
&&M_{n-1}\underset{M_{n-2,j-1}}\times M_{n-1}\arrow[dd,"\pi^{n-1}_{0,j-1}"{left}]\arrow[rrr,"\pi^{n-1}_{1,j-1}"]&&&M_{n-1}\arrow[dd,"s^{n-1}_{n-2,j-1}"]\\
M_n\arrow[rrd,"s^{n}_{n-1,i}"{below}]\\
&&M_{n-1}\arrow[rrr,"t^{n-1}_{n-2,j-1}"{below}]&&&M_{n-2}
\end{tikzcd}

 which gives the following commutative diagram 

\begin{tikzcd}
M_n\underset{M_{n-1,j}}\times M_n\arrow[d,"\star^{n}_{j}"{left}]
\arrow[rr,"s^{n}_{n-1,i}\underset{j,j-1}\times s^{n}_{n-1,i}"]&&
M_{n-1}\underset{M_{n-2,j-1}}\times M_{n-1}\arrow[d,"\star^{n-1}_{j-1}"]\\
M_n\arrow[rr,"s^{n}_{n-1,i}"{below}]&&M_{n-1}
\end{tikzcd}

\item For $1\leq j<i\leq n$ we consider the following two cones :

\begin{tikzcd}
M_n\underset{M_{n-1,j}}\times M_n\arrow[d,"\pi^{n}_{0,j}"{left}]
\arrow[rr,"\pi^n_{1,j}"]&&M_n\arrow[d,"s^{n}_{n-1,j}"]\\
M_n\arrow[rr,"t^{n}_{n-1,j}"{below}]&&M_{n-1}\end{tikzcd}
\qquad\begin{tikzcd}
M_{n-1}\underset{M_{n-2,j}}\times M_{n-1}\arrow[d,"\pi^{n-1}_{0,j}"{left}]
\arrow[rr,"\pi^{n-1}_{1,j}"]&&M_{n-1}\arrow[d,"s^{n-1}_{n-2,j}"]\\
M_{n-1}\arrow[rr,"t^{n-1}_{n-2,j}"{below}]&&M_{n-2}\end{tikzcd}

and the following commutative diagram 
(definition of $s^{n}_{n-1,i}\underset{j,j}\times s^{n}_{n-1,i}$)

\begin{tikzcd}
M_n\underset{M_{n-1,j}}\times M_n
\arrow[rrr,"\pi^{n}_{1,j}"]\arrow[dd,"\pi^{n}_{0,j}"{left}]
\arrow[rrd,dotted,"s^{n}_{n-1,i}\underset{j,j}\times s^{n}_{n-1,i}"]&&&M_n
\arrow[rrd,"s^{n}_{n-1,i}"]\\
&&M_{n-1}\underset{M_{n-2,j}}\times M_{n-1}\arrow[dd,"\pi^{n-1}_{0,j}"{left}]\arrow[rrr,"\pi^{n-1}_{1,j}"]&&&M_{n-1}\arrow[dd,"s^{n-1}_{n-2,j}"]\\
M_n\arrow[rrd,"s^{n}_{n-1,i}"{below}]\\
&&M_{n-1}\arrow[rrr,"t^{n-1}_{n-2,j}"{below}]&&&M_{n-2}
\end{tikzcd}

\end{itemize}

The previous datas gives the following commutative diagram of axioms

\begin{tikzcd}
M_n\underset{M_{n-1}}\times M_n\arrow[d,"\star^{n}_{j}"{left}]
\arrow[rr,"s^{n}_{n-1,i}\underset{j,j}\times s^{n}_{n-1,i}"]&&
M_{n-1}\underset{M_{n-2}}\times M_{n-1}\arrow[d,"\star^{n-1}_{j}"]\\
M_n\arrow[rr,"s^{n}_{n-1,i}"{below}]&&M_{n-1}
\end{tikzcd}

and for $1\leq j\leq n$ we have the following commutative diagram of axioms

\begin{tikzcd}
M_n\underset{M_{n-1}}\times M_n\arrow[d,"\star^{n}_{j}"{left}]
\arrow[rr,"\pi_1"]&&
M_n\arrow[d,"s^{n}_{n-1,j}"]\\
M_n\arrow[rr,"s^{n}_{n-1,j}"{below}]&&M_{n-1}\end{tikzcd}

which actually complete the description of $\mathcal{E}_M$

\begin{definition} 
Cubical reflexive $\infty$-magmas are cubical reflexive set equipped a structure of $\infty$-magmas. A morphism between 
two cubical reflexive $\infty$-magmas is a morphism of their underlying
cubical reflexive sets which respects partial operations $(\circ^{n}_{j})_{n\geq 1, j\in\llbracket 1,n \rrbracket}$. 
The category of cubical reflexive $\infty$-magmas is noted $\I\Cu\Mag_\text{r}$
\end{definition}


\subsection{Definition}
\label{strict-cat}

Strict cubical $\infty$-categories are cubical reflexive $\infty$-magmas such that partials operations are associative and also
we require the following axioms :

\begin{enumerate}[(i)]
\item \label{interchange} The interchange laws : 
    $(a\circ^{n}_{i}b)\circ^{n}_{j}(c\circ^{n}_{i}d)=(a\circ^{n}_{j}c)\circ^{n}_{i}(b\circ^{n}_{j}d)$ whenever
    both sides are defined \\
    
  \item $1^{n}_{n+1,i}(a\circ^{n}_{j}b)=1^{n}_{n+1,i}(a)\circ^{n+1}_{j+1}1^{n}_{n+1,i}(b)$ if $1\leq i\leq j\leq n$
  
  $1^{n}_{n+1,i}(a\circ^{n}_{j}b)=1^{n}_{n+1,i}(a)\circ^{n+1}_{j}1^{n}_{n+1,i}(b)$ if $1\leq j<i\leq n+1$  
  
  \item $1^{n,\gamma}_{n+1,i}(a\circ^{n}_{j}b)=1^{n,\gamma}_{n+1,i}(a)\circ^{n+1}_{j+1}1^{n,\gamma}_{n+1,i}(b)$ if $1\leq i< j\leq n$
  
  $1^{n,\gamma}_{n+1,i}(a\circ^{n}_{j}b)=1^{n,\gamma}_{n+1,i}(a)\circ^{n+1}_{j}1^{n,\gamma}_{n+1,i}(b)$ if $1\leq j< i\leq n$
  
\item First transport laws : for $1\leq j\leq n$
\[
1^{n,+}_{n+1,j}(a\circ^{n}_{j}b)=
  \begin{bmatrix}
    1^{n,+}_{n+1,j}(a) & 1^{n}_{n+1,j}(a) \\
    1^{n}_{n+1,j+1}(a)& 1^{n,+}_{n+1,j}(b) 
 \end{bmatrix}
\]

   \item Second transport laws : for $1\leq j\leq n$
\[
1^{n,-}_{n+1,j}(a\circ^{n}_{j}b)=
  \begin{bmatrix}
    1^{n,-}_{n+1,j}(a) & 1^{n}_{n+1,j+1}(b) \\
    1^{n}_{n+1,j}(b)& 1^{n,-}_{n+1,j}(b) 
 \end{bmatrix}
\]

 \item  for $1\leq j\leq n$, $1^{n,+}_{n+1,i}(x)\circ^{n+1}_{i}1^{n,-}_{n+1,i}(x)=1^{n}_{n+1,i+1}(x)$ and $1^{n,+}_{n+1,i}(x)
 \circ^{n+1}_{i+1}1^{n,-}_{n+1,i}(x)=1^{n}_{n+1,i}(x)$
 
 \end{enumerate}

The category $\CC$ of strict cubical $\infty$-categories is the full subcategory of 
$\infty\text{-}\mathbb{C}\mathbb{M}\text{ag}_\text{r}$ spanned by strict cubical $\infty$-categories.
A morphism in $\CC$ is called a \textit{strict cubical $\infty$-functor}. We study it more specifically in \ref{cub-funct}
with the perspective to weakened it and to obtain cubical model of weak $\infty$-functors.

\begin{ex}
The following $2$-cubical diagram shows us how compositions and degeneracies work
for strict cubical $\infty$-categories in low dimensions;  consider the following 
$2$-cubical shape : 

\begin{tikzcd}
1(0)\arrow[d,"1(1)"{left}]\arrow[r,"1(1)"]\arrow[rd,phantom,"1(2)"]&1(0)\arrow[d,"1(1)"{right}]\\
1(0)\arrow[d,"1(1)"{left}]\arrow[r,"1(1)"]\arrow[rd,phantom,"1(2)"]&1(0)\arrow[d,"1(1)"{right}, near end]\arrow[r,"1(1)"]
\arrow[rd,phantom,"1(2)"]&1(0)\arrow[d,"1(1)"{right}]\\
1(0) \arrow[r,"1(1)"{below}]&1(0)\arrow[r,"1(1)"{below}]&1(0)
\end{tikzcd}

Our objectif is to see it as a $2$-cube and degeneracies 
permit it :

\begin{tikzcd}
1(0)\arrow[dd,"1(1)"{left}]\arrow[rr,"1(1)"]\arrow[rrdd,phantom,"1(2)"]
&&1(0)\arrow[dd,"1(1)", near start]\arrow[rr,"1^{0}_{1}(1(0))"]\arrow[rrdd,phantom,"1^1_{2,2}(1(1))"]&&1(0)\arrow[dd,"1(1)", near start]\arrow[rr,"1(1)"]
\arrow[rrdd,phantom,"1^{1,-}_{2,1}(1(1))"]
&&1(0)\arrow[dd,"1^{0}_{1}(1(0))", near start]\arrow[rr,"1^{0}_{1}(1(0))"]\arrow[rrdd,phantom,"1^{1,+}_{2,1}(1(1))"]&&1(0)\arrow[dd,"1(1)"]\\\\
1(0)\arrow[dd,"1(1)"{left}]\arrow[rr,"1(1)"]\arrow[rrdd,phantom,"1(2)"]&&1(0)\arrow[dd,"1(1)"]\arrow[rr,"1^{0}_{1}(1(0))"]\arrow[rrrrdd,phantom,"1^1_{2,2}(1(1))"]&&1(0)
\arrow[rr,"1^{0}_{1}(1(0))"]&&1(0)\arrow[dd,"1(1)"{left}]\arrow[rr,"1(1)"]\arrow[rrdd,phantom,"1(2)"]&&1(0)\arrow[dd,"1(1)"]\\\\
1(0)\arrow[rr,"1(1)"{below}]&&1(0)\arrow[rrrr,"1^{0}_{1}(1(0))"{below}]&&&&1(0)\arrow[rr,"1(1)"{below}]&&1(0)
\end{tikzcd}

We can recognize in this low dimensional diagram the connections $1^{1,-}_{2,1}$, $1^{1,+}_{2,1}$ and the more 
"classical" reflexivity  $1^{0}_{1}$ and $1^1_{2,2}$ which are important tools provided by the underlying cubical 
strict omega structure. Thanks to the axioms for cubical strict $\infty$-categories the two shapes above represent 
the same operation $1(2)\circ^{2}_{1}1(2)\circ^{2}_{2}1(2)$.
\end{ex}
 
\subsection{The monad of cubical strict $\infty$-categories}
\label{monad-strict}
In this section we describe cubical strict $\infty$-categories as algebras for a monad $\mathbb{S}$ on 
$\mathbb{C}\E$. We hope it to be a specific ingredient to compare globular strict
$\infty$-categories with cubical strict $\infty$-categories. Also we conjecture that $\mathbb{S}$ is cartesian.

Consider the forgetful functor : $\xymatrix{\CC\ar[rr]^{U}&&\mathbb{C}\E}$
which associate to any strict cubical $\infty$-category its underlying cubical set and which associate to any
strict cubical $\infty$-functor its underlying morphism of cubical sets. 
\begin{proposition}
The functor $U$ is right adjoint and monadic.
\end{proposition}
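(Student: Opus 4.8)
The plan is to recognize $\CC$ as the category of set-valued models of a small projective (finite-limit) sketch, and then to extract the left adjoint from the general theory of locally presentable categories. The sketches $\mathcal{E}_\text{S}$ and $\mathcal{E}_M$ assembled above are precisely the data needed for this, so the work is mostly bookkeeping rather than genuinely new.

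First I would assemble a single small sketch $\mathcal{E}$ whose models are exactly the cubical strict $\infty$-categories. Its underlying category contains the generators of the reflexive cubical category $\Cr$ (the sources $s^{n}_{n-1,j}$, the targets $t^{n}_{n-1,j}$, the reflexivities $1^{n}_{n+1,j}$ and the connections $1^{n,\gamma}_{n+1,j}$), together with the formal composition arrows $\star^{n}_{j}$ whose domains are the objects $M_n\underset{M_{n-1,j}}{\times}M_n$; the cones forcing those objects to be the displayed pullbacks are exactly the cones of $\mathcal{E}_M$. All the equational constraints — the cubical relations of \S\ref{cub-sets}, the reflexivity and connection relations of \S\ref{reflexive_cubical_sets}, the axioms of positions, associativity, the interchange law, the compatibility of reflexivities and connections with composition, and the two transport laws — are then imposed as commutativity conditions between composites of these arrows. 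Since every cone occurring is a finite limit (a finite product or a pullback), $\mathcal{E}$ is a projective sketch, and by construction $\mathrm{Mod}(\mathcal{E})\simeq\CC$ while $\mathrm{Mod}(\mathcal{E}_\text{S})\simeq\Cu\E$.

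Next I would observe that the inclusion of data $\mathcal{E}_\text{S}\hookrightarrow\mathcal{E}$ is a morphism of sketches, and that the functor it induces on models is exactly the forgetful functor $U$. Two standard facts then close the argument: (a) the category of models of a small finite-limit sketch in $\E$ is locally presentable (indeed locally finitely presentable), so both $\CC$ and $\Cu\E$ are locally presentable; and (b) a functor between categories of models induced by a sketch morphism preserves all limits and is accessible. The preservation of limits can also be seen by hand, since $\CC$ is closed under limits inside $\I\Cu\Mag_\text{r}$ — the defining cones and commutativities are stable under limits — so $U$ creates, hence preserves, them. By the adjoint functor theorem for locally presentable categories, namely that a limit-preserving accessible functor between locally presentable categories admits a left adjoint, the functor $U$ is a right adjoint.

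I expect the real effort to lie not in the adjunction, which is formal once the sketch is in place, but in checking that every axiom is genuinely expressible by finite-limit cones and commuting diagrams. The delicate points are the two transport laws, whose right-hand sides are $2\times2$ matrix composites and must be rephrased as iterated composites over a tower of pullbacks, and the source/target position axioms (ii)--(iii), which split into the regimes $i<j$ and $j<i$ and therefore draw on the two separate families of pullback cones built into $\mathcal{E}_M$. Once one verifies that these, together with associativity and interchange, close up into a consistent projective sketch, the existence of the left adjoint — the free cubical strict $\infty$-category on a cubical set — is immediate.
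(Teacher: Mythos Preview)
Your proposal is correct and follows essentially the same route as the paper: exhibit $\CC$ and $\Cu\E$ as categories of models of projective sketches $\mathcal{E}_\text{C}\supset\mathcal{E}_\text{S}$, observe that the inclusion of sketches induces $U$, and invoke a general existence theorem for the left adjoint. The only cosmetic difference is the tool you name at the end --- the paper cites Foltz's sheafification theorem directly, whereas you appeal to the adjoint functor theorem for locally presentable categories; these amount to the same thing here, and your identification of the transport laws and the $i\lessgtr j$ case split as the places requiring care matches the paper's explicit sketch-building.
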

Its left adjoint is denoted $F$. The proof is based on exhibiting a good morphism of projective sketches 
and it is also based on two results of Foltz \cite{foltz} and Lair \cite{lair-triplabilite} : the non-trivial part is 
to exhibit the projective sketch $\mathcal{E}_\text{C}$ of the 
category $\CC$ and we shall easily see that we get an inclusion of projective sketches :
\begin{tikzcd}
\mathcal{E}_\text{S}\arrow[rr,"i",hook]&&\mathcal{E}_\text{C}
\end{tikzcd}
where $\mathcal{E}_\text{S}$ is the sketch of cubical sets \ref{cub-sets}.
Now we have the commutative diagram 

\begin{tikzcd}
Mod(\mathcal{E}_\text{C})\arrow[dd,"iso"{left}]\arrow[rr,"Mod(i)"]&&Mod(\mathcal{E}_\text{C})\arrow[dd,"iso"]\\\\
\CC\arrow[rr,"U"]&&\Cu\E
\end{tikzcd}

when passing to models in $\E$.
It shows that $i$ induces the forgetful functor $U$ and that $U$ 
is right adjoint thanks to the \textit{sheafification theorem} of Foltz \cite{foltz}. 
\begin{remark}
This result of Foltz is called the \textit{sheafification theorem}, because it generalizes the construction
of sheaves associated to presheaves for a given site.
\end{remark}
Following the terminology
of \cite{lair-triplabilite} we say that the functor $U$ is \textit{projectively sketchable}. 

Also we shall easily see that each distinguished cone of $\mathcal{E}_\text{C}$
has its base which factorize $i$ and each object of $\mathcal{E}_\text{C}$ which is not in the image of $i$ is the
vertex of at least one distinguished cone of $\mathcal{E}_\text{C}$. Thus by the theorem of Lair in \cite{lair-triplabilite}
about monadicity, it shows that $U$ is monadic. 
\begin{proof}
\label{sketch-c}
The proof is very similar to those in \cite{penon} : actually we are going to see that 
the category $\I\mathbb{C}\mathbb{C}\text{AT}$ is projectively sketchable. Let us denote by $\mathcal{E}_\text{C}$ the sketch of 
$\CC$.    
The description of $\mathcal{E}_\text{C}$ started with the 
description of $\mathcal{E}_\text{M}$ in \ref{strict-infty}. We carry
on to it in describing the sketch behind the interchange laws, which
shall complete main parts of $\mathcal{E}_\text{C}$ :

\begin{itemize}
\item In the first generation of $\mathcal{E}_\text{C}$ we start with three cones :

\begin{tikzcd}
Z_n\underset{Z_{n-1,i}}\times Z_n\arrow[d,"\rho^{n}_{0,i}"{left}]
\arrow[rr,"\rho^{n}_{1,i}"]&&
Z_n\arrow[d,"s^{n}_{n-1,i}"]\\
Z_n\arrow[rr,"t^{n}_{n-1,i}"{below}]&&Z_{n-1}\end{tikzcd}

\begin{tikzcd}
Z_n\underset{Z_{n-1,j}}\times Z_n\arrow[d,"\rho^{n}_{0,j}"{left}]
\arrow[rr,"\rho^{n}_{1,j}"]&&
Z_n\arrow[d,"s^{n}_{n-1,j}"]\\
Z_n\arrow[rr,"t^{n}_{n-1,j}"{below}]&&Z_{n-1}\end{tikzcd}

\begin{tikzcd}
&&&E_{ijn}\arrow[llldd,"\pi^n_{00}"{left}]\arrow[ldd,"\pi^n_{10}"]\arrow[rdd,"\pi^n_{01}"]\arrow[rrrdd,"\pi^n_{11}"]\\\\
Z_n\arrow[rd,"t^{n}_{n-1,i}"{left}]&&Z_n\arrow[ld,"s^{n}_{n-1,i}"]\arrow[rd,"s^{n}_{n-1,j}"{left}]&&
Z_n\arrow[ld,"t^{n}_{n-1,j}"]\arrow[rd,"t^{n}_{n-1,i}"{left}]
&&Z_n\arrow[ld,"s^{n}_{n-1,i}"]\\
&Z_{n-1}&&Z_{n-1}&&Z_{n-1}
\end{tikzcd}

\item Then we consider the following commutative diagrams :

\[\begin{tikzcd}
E_{ijn}\arrow[ddddrr,"\pi^n_{00}"{left}]\arrow[ddrr,dotted,"p^n_{1000}"]\arrow[ddrrrr,"\pi^n_{10}"]\\\\
&&Z_n\underset{Z_{n-1,i}}\times Z_n\arrow[rr,"\rho^{n}_{1,i}"]\arrow[dd,"\rho^{n}_{0,i}"{left}]&&Z_n
\arrow[dd,"s^{n}_{n-1,i}"]\\\\
&&Z_n\arrow[rr,"t^{n}_{n-1,i}"{below}]&&Z_n
\end{tikzcd}\qquad \begin{tikzcd}
E_{ijn}\arrow[ddddrr,"\pi^n_{01}"{left}]\arrow[ddrr,dotted,"p^n_{1101}"]\arrow[ddrrrr,"\pi^n_{11}"]\\\\
&&Z_n\underset{Z_{n-1,i}}\times Z_n\arrow[rr,"\rho^{n}_{1,i}"]\arrow[dd,"\rho^{n}_{0,i}"{left}]&&Z_n
\arrow[dd,"s^{n}_{n-1,i}"]\\\\
&&Z_n\arrow[rr,"t^{n}_{n-1,i}"{below}]&&Z_n
\end{tikzcd}\]

\[\begin{tikzcd}
E_{ijn}\arrow[ddddrr,"\pi^n_{11}"{left}]\arrow[ddrr,dotted,"p^n_{1011}"]\arrow[ddrrrr,"\pi^n_{10}"]\\\\
&&Z_n\underset{Z_{n-1,j}}\times Z_n\arrow[rr,"\rho^{n}_{1,j}"]\arrow[dd,"\rho^{n}_{0,j}"{left}]&&Z_n
\arrow[dd,"s^{n}_{n-1,j}"]\\\\
&&Z_n\arrow[rr,"t^{n}_{n-1,j}"{below}]&&Z_n
\end{tikzcd}\qquad \begin{tikzcd}
E_{ijn}\arrow[ddddrr,"\pi^n_{01}"{left}]\arrow[ddrr,dotted,"p^n_{0001}"]\arrow[ddrrrr,"\pi^n_{00}"]\\\\
&&Z_n\underset{Z_{n-1,j}}\times Z_n\arrow[rr,"\rho^{n}_{1,j}"]\arrow[dd,"\rho^{n}_{0,j}"{left}]&&Z_n
\arrow[dd,"s^{n}_{n-1,j}"]\\\\
&&Z_n\arrow[rr,"t^{n}_{n-1,j}"{below}]&&Z_n
\end{tikzcd}\]

\item We consider then (still in the first generation) the following two commutative diagrams :

\[\begin{tikzcd}
&E_{ijn}\arrow[ldd,"p^n_{1000}"{left}]\arrow[dddd,dotted,"c^n_1"]\arrow[rdd,"p^n_{1101}"]\\\\
Z_n\underset{Z_{n-1,i}}\times Z_n \arrow[ddd,"\star^{n}_{n-1,i}"{left}]&&
Z_n\underset{Z_{n-1,i}}\times Z_n\arrow[ddd,"\star^{n}_{n-1,i}"]\\\\
&Z_n\underset{Z_{n-1,j}}\times Z_n\arrow[ld,"\rho^{n}_{1,j}"{above}]\arrow[rd,"\rho^{n}_{0,j}"]\\
Z_n\arrow[rd,"s^{n}_{n-1,j}"{below}]&&Z_n\arrow[ld,"t^{n}_{n-1,j}"]\\
&Z_{n-1}
\end{tikzcd}\qquad \begin{tikzcd}
&E_{ijn}\arrow[ldd,"p^n_{1011}"{left}]\arrow[dddd,dotted,"c^n_2"]\arrow[rdd,"p^n_{0001}"]\\\\
Z_n\underset{Z_{n-1,j}}\times Z_n \arrow[ddd,"\star^{n}_{n-1,j}"{left}]&&
Z_n\underset{Z_{n-1,j}}\times Z_n\arrow[ddd,"\star^{n}_{n-1,j}"]\\\\
&Z_n\underset{Z_{n-1,i}}\times Z_n\arrow[ld,"\rho^{n}_{1,i}"{above}]\arrow[rd,"\rho^{n}_{0,i}"]\\
Z_n\arrow[rd,"s^{n}_{n-1,i}"{below}]&&Z_n\arrow[ld,"t^{n}_{n-1,i}"]\\
&Z_{n-1}
\end{tikzcd}\]

\item Finally we consider the following commutative diagram of interchange
laws

\begin{tikzcd}
&&E_{ijn}\arrow[ddll,"c^n_1"{above}]\arrow[ddrr,"c^n_2"]\\\\
Z_n\underset{Z_{n-1}}\times Z_n\arrow[rrdd,"\star^{n}_{n-1,j}"{left}]&&&&Z_n\underset{Z_{n-1}}\times Z_n
\arrow[lldd,"\star^{n}_{n-1,i}"]\\\\
&&Z_n
\end{tikzcd}
\end{itemize}

\end{proof}

The monad of strict cubical $\infty$-categories on cubical sets is denoted $\mathbb{S}=(S,\lambda,\mu)$. Here $\lambda$ is 
the unit map of $\mathbb{S}$ : 

$$\xymatrix{1_{\mathbb{C}\E}\ar[rr]^{\lambda}&&S}$$
and $\mu$ is the multiplication of 
$\mathbb{S}$ : 
$$\xymatrix{S^2\ar[rr]^{\mu}&&S}$$ 
We conjecture the following result \cite{camark-cart} :
\begin{conj} 
\label{conj-cart}
$\mathbb{S}=(S,\lambda,\mu)$ is cartesian.
\end{conj}

\section{The category of cubical weak $\infty$-categories}
\label{stretchings}

In this section we exhibit the first algebraic models of cubical weak $\infty$-categories. Thus these models are
algebras for a specific monad that we describe below, acting on the category $\CS$ of cubical sets \ref{CS}. We shall propose in \cite{camark-cub-1}
 other algebraic models of cubical weak $\infty$-categories defined as algebras
for a specific cubical operad, and this operadical models are possible up to the conjecture \ref{conj-cart} just above. However
the algebraic models of this article doesn't need the conjecture \ref{conj-cart}, and this is one advantage of the 
\textit{Weakened by Stretchings} initiated by Jacques Penon in \cite{penon} which doesn't care about cartesianess of monads.
However because in \cite{bat-pen} Michael Batanin had proved that there is a morphism of operads
\begin{tikzcd}
\mathbb{B}^{0}_C\arrow[r]&\mathbb{P}^{0}_C
\end{tikzcd}
where $\mathbb{B}^{0}_C$-algebras are weak $\infty$-categories of Batanin and $\mathbb{P}^{0}_C$-algebras are weak 
$\infty$-categories of Penon, we suspect the same phenomena between the operad which underlies the monad 
described just below and the operad of cubical weak $\infty$-categories described in \cite{camark-cub-1}
.
\subsection{The category of cubical categorical stretchings}
\label{stretchings}

We have defined the category $\I\mathbb{C}\mathbb{M}\text{ag}_\text{r}$ of cubical reflexive $\infty$-magmas in \ref{strict-infty}. 
Objects of this category plus cubical strict $\infty$-categories, allow to define \textit{cubical categorical stretchings} (see below), 
which are objects of the category 
$\I\mathbb{C}\mathbb{E}\text{tC}$. This category is the key ingredient to weakened cubical strict $\infty$-categories as it 
was done in \cite{penon} for the globular paradigm. Our cubical weak $\infty$-categories are algebraic in the 
sense that they are algebras (\ref{weak-def}) for a monad on $\mathbb{C}\E$ which is build by using the category 
of cubical categorical stretchings. Our way to build the category $\I\mathbb{C}\mathbb{M}\text{ag}_\text{r}$ allow
to weakened the whole structure of cubical strict $\infty$-categories. As we shall see, the central notion of 
\textit{cubical contractions} (see below) are more subtle than globular contractions of \cite{batanin-main,penon} : 
in particular they must be thought with an inductive definition on the dimension $n$ of the $n$-cells ($n\in\mathbb{N}$).
 
 The category $\Et$ of cubical categorical stretchings has as objects 
 quintuples 
  \[ \mathbb{E}=(M,C,\pi,([-;-]^{n}_{n+1,j})_{n\in\mathbb{N};j\in\{1,...,n+1\}},
  ([-;-]^{n,\gamma}_{n+1,j})_{n\geq 1;j\in\{1,...,n\};\gamma\in\{-,+\}})\] 
  where $M$ is a reflexive cubical $\infty$-magma, $C$ is a cubical strict $\infty$-category, $\pi$ is
a morphism in $\I\mathbb{C}\mathbb{M}\text{ag}_\text{r}$
\[\xymatrix{M\ar[rr]^{\pi}&&C}\] 
and $([-;-]^{n}_{n+1,j})_{n\in\mathbb{N};j\in\{1,...,n+1\}}, ([-;-]^{n,\gamma}_{n+1,j})_{n\geq 1;j\in\{1,...,n\};\gamma\in\{-,+\}}$ are
extra structures called the \textit{cubical bracketing structures}, and which are the cubical analogue of the key
structure of the Penon approach to weakening the axioms of strict
$\infty$-categories; it is for us the key structures which are going to weakening the axioms of cubical strict $\infty$-categories. Let be more precise about it :  

For $n\geq 1$ and for all integer $k\geq 1$, consider
the following subsets of $M_{n}\times M_{n}$

\begin{itemize}
\item $\underline{M}_{n}=\{(\alpha,\beta)\in M_{n}\times M_{n}: \pi_{n}(\alpha)=\pi_{n}(\beta)\}$ 
 
\item $\underline{M}^s_{n,j}=\{(\alpha,\beta)\in M_{n}\times M_{n}: 
s^{n}_{n-1,j}(\alpha)=s^{n}_{n-1,j}(\beta)\text{ and }
\pi_{n}(\alpha)=\pi_{n}(\beta)\}$

\item $\underline{M}^t_{n,j}=\{(\alpha,\beta)\in M_{n}\times M_{n}: 
t^{n}_{n-1,j}(\alpha)=t^{n}_{n-1,j}(\beta)\text{ and }
\pi_{n}(\alpha)=\pi_{n}(\beta)\}$
\end{itemize}
and also we consider
$\underline{M}_{0}=\{(\alpha,\beta)\in M_{0}\times M_{0}:
  \alpha=\beta\}$ 
 
Thus these extra structures are given by maps
\[([-;-]^{n}_{n+1,j} :\xymatrix{\underline{M}_{n}\ar[r]^{}&M_{n+1}})_{n\in\mathbb{N};j\in\{1,...,n+1\}}\]

such that
\begin{itemize}
\item If $1\leq i<j\leq n+1$, then
  $$s^{n+1}_{n,i}([\alpha,\beta]^{n}_{n+1,j})=[s^{n}_{n-1,i}(\alpha),s^{n}_{n-1,i}(\beta)]^{n-1}_{n,j-1},
  \text{ and }t^{n+1}_{n,i}([\alpha,\beta]^{n}_{n+1,j})=[t^{n}_{n-1,i}(\alpha),t^{n}_{n-1,i}(\beta)]^{n-1}_{n,j-1}$$
  
\item If $1\leq j<i\leq n+1$ then 
$$s^{n+1}_{n,i}([\alpha,\beta]^{n}_{n+1,j})=[s^{n}_{n-1,i-1}(\alpha),s^{n}_{n-1,i-1}(\beta)]^{n-1}_{n,j}, 
\text{ and }t^{n+1}_{n,i}([\alpha,\beta]^{n}_{n+1,j})=[t^{n}_{n-1,i-1}(\alpha),t^{n}_{n-1,i-1}(\beta)]^{n-1}_{n,j}$$
  
\item If $i=j$ then
$$s^{n+1}_{n,i}([\alpha,\beta]^{n}_{n+1,j})=\alpha \text{ and } t^{n+1}_{n,i}([\alpha,\beta]^{n}_{n+1,j})=\beta$$

\item
  $\pi_{n+1}([\alpha,\beta]^{n}_{n+1,j})=1^{n}_{n+1,j}(\pi_{n}(\alpha))=1^{n}_{n+1,j}(\pi_{n}(\beta))$,
\item $\forall \alpha\in M_{n}, [\alpha,\alpha]^{n}_{n+1,j}=1^{n}_{n+1,j}(\alpha)$.
\end{itemize}

and also are given by maps

\[([-;-]^{n,-}_{n+1,j} :\xymatrix{\underline{M}^s_{n,j}\ar[r]^{}&
M_{n+1}})_{n\geq 1;j\in\{1,...,n\}}\text{ and } ([-;-]^{n,+}_{n+1,j} :\xymatrix{\underline{M}^{t}_{n,j}\ar[r]^{}&M_{n+1}})_{n\geq 1;j\in\{1,...,n\}}\]
such that

\begin{itemize}
\item for $1\leq j\leq n$ we have :
\begin{itemize}

 \item $s^{n+1}_{n,j}([\alpha;\beta]^{n,-}_{n+1,j})=\alpha$ and
 $s^{n+1}_{n,j+1}([\alpha;\beta]^{n,-}_{n+1,j})=\beta$
\item $t^{n+1}_{n,j}([\alpha;\beta]^{n,+}_{n+1,j})=\alpha$ and $t^{n+1}_{n,j+1}([\alpha;\beta]^{n,-}_{n+1,j})=\beta$
\item $s^{n+1}_{n,j}([\alpha;\beta]^{n,+}_{n+1,j})=s^{n+1}_{n,j+1}([\alpha;\beta]^{n,+}_{n+1,j})=
[s^{n}_{n-1,j}(\alpha);s^{n}_{n-1,j}(\beta)]^{n-1}_{n,j}$
\item $t^{n+1}_{n,j}([\alpha;\beta]^{n,-}_{n+1,j})=t^{n+1}_{n,j+1}([\alpha;\beta]^{n,-}_{n+1,j})=
[t^{n}_{n-1,j}(\alpha);t^{n}_{n-1,j}(\beta)]^{n-1}_{n,j}$
\end{itemize}

\item for $1\leq i,j\leq n+1$
\begin{itemize}
\item $s^{n+1}_{n,i}([\alpha;\beta]^{n,\gamma}_{n+1,j})=
\begin{cases}
  [s^{n}_{n-1,i}(\alpha);s^{n}_{n-1,i}(\beta)]^{n-1,\gamma}_{n,j-1}\text{  if  } 
  
  1\leq i<j\leq n \\
  [s^{n}_{n-1,i-1}(\alpha);s^{n}_{n-1,i-1}(\beta)]^{n-1,\gamma}_{n,j}\text{  if  }
  2\leq j+1<i\leq n+1
\end{cases}$ 

\item $t^{n+1}_{n,i}([\alpha;\beta]^{n,\gamma}_{n+1,j})=
\begin{cases}
  [t^{n}_{n-1,i}(\alpha);t^{n}_{n-1,i}(\beta)]^{n-1,\gamma}_{n,j-1}\text{  if  } 
  
  1\leq i<j\leq n \\
  [t^{n}_{n-1,i-1}(\alpha);t^{n}_{n-1,i-1}(\beta)]^{n-1,\gamma}_{n,j}\text{  if  }
  2\leq j+1<i\leq n+1
\end{cases}$ 

\end{itemize}
\item $\pi_{n+1}([\alpha;\beta]^{n,\gamma}_{n+1,j})=1^{n,\gamma}_{n+1,j}(\pi_{n}(\alpha))
=1^{n,\gamma}_{n+1,j}(\pi_{n}(\beta))$
\item $\forall \alpha\in M_{n}, [\alpha,\alpha]^{n,\gamma}_{n+1,j}=1^{n,\gamma}_{n+1,j}(\alpha)$.
\end{itemize}

A morphism of cubical categorical stretchings
\[\xymatrix{\mathbb{E}\ar[rr]^{(m,c)}&&\mathbb{E}'}\]
is given by the following commutative square in $\I\mathbb{C}\mathbb{M}\text{ag}_\text{r}$,
\[\xymatrix{M\ar[d]_{\pi}\ar[rr]^{m}&&M'\ar[d]^{\pi'}\\
  C\ar[rr]_{c}&&C' }\]
such that for all $n\in \mathbb{N}$, and for all
$(\alpha,\beta)\in\widetilde{M}_{n}$,
\[m_{n+1}([\alpha,\beta]^{n}_{n+1,j})=[m_{n}(\alpha),m_{n}(\beta)]^{n}_{n+1,j} 
\hspace{.2cm}(j\in\{1,...,n+1\})\]

and

\[m_{n+1}([\alpha,\beta]^{n,\gamma}_{n+1,j})=[m_{n}(\alpha),
m_{n}(\beta)]^{n,\gamma}_{n+1,j}\hspace{.2cm}(j\in\{1,...,n\}, \gamma\in\{-,+\})\]

The category of cubical categorical stretchings is denoted $\Et$

Now consider the forgetful functor: 
$\xymatrix{\Et\ar[rr]^{U_{}}&&\Cu\E}$
given by :

$$\xymatrix{(M,C,\pi,([-;-]^{n}_{n+1,j})_{n\in\mathbb{N};j\in\{1,...,n\}},
([-;-]^{n,\gamma}_{n+1,j})_{n\in\mathbb{N};j\in\{1,...,n\};\gamma\in\{-,+\}})\ar@{|->}[r]^{}&M },$$

\begin{proposition}
 The functor $U$ just above
 has a left adjoint which produces a monad
$\mathbb{W}=(W,\eta,\nu)$ on the category of cubical sets.
\end{proposition}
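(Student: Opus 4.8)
The plan is to repeat, for $\Et$, the sketch-theoretic argument already used in \ref{monad-strict} to establish that $U:\CC\to\Cu\E$ is right adjoint: I would exhibit $\Et$ as the category of models of a projective sketch $\mathcal{E}_\text{EtC}$ that contains the sketch $\mathcal{E}_\text{S}$ of cubical sets, check that the inclusion $\mathcal{E}_\text{S}\hookrightarrow\mathcal{E}_\text{EtC}$ induces precisely the forgetful functor $U$, and then invoke the sheafification theorem of Foltz \cite{foltz} to produce the left adjoint. The adjunction so obtained then yields the monad $\mathbb{W}=(W,\eta,\nu)$ in the standard way.

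First I would show that $\Et$ is projectively sketchable. The sketch $\mathcal{E}_\text{EtC}$ is assembled from several pieces already available: the sketch $\mathcal{E}_M$ describing the underlying cubical $\infty$-magma $M$, the sketch $\mathcal{E}_\text{C}$ describing the cubical strict $\infty$-category $C$, a distinguished arrow $\pi$ recording the morphism $M\to C$ in $\I\Cu\Mag_\text{r}$, and distinguished arrows for the bracketing operations $[-;-]^{n}_{n+1,j}$ and $[-;-]^{n,\gamma}_{n+1,j}$. The essential point is that the domains $\underline{M}_n$, $\underline{M}^s_{n,j}$ and $\underline{M}^t_{n,j}$ on which the brackets are partially defined are \emph{limits}: each is a pullback built from the appropriate maps $s^{n}_{n-1,j}$, $t^{n}_{n-1,j}$ together with the $\pi$-fibre condition $\pi_n(\alpha)=\pi_n(\beta)$. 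Introducing a distinguished limit cone with apex each such domain, and then an operation-arrow from that apex into $M_{n+1}$, records the brackets inside a projective sketch. Every axiom they satisfy---the source and target formulas with their index shifts according to whether $1\leq i<j$ or $j<i$, the compatibility $\pi_{n+1}([\alpha,\beta]^{n}_{n+1,j})=1^{n}_{n+1,j}(\pi_n(\alpha))$, and the reflexivity $[\alpha,\alpha]^{n}_{n+1,j}=1^{n}_{n+1,j}(\alpha)$---becomes a commutativity condition among arrows of the sketch, so no colimit data are needed and $\mathcal{E}_\text{EtC}$ stays projective, giving $Mod(\mathcal{E}_\text{EtC})\cong\Et$.

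Next I would observe that $\mathcal{E}_\text{S}$ sits inside $\mathcal{E}_\text{EtC}$ as exactly the part describing the underlying cubical set of $M$, and that the induced restriction-of-models functor coincides with the forgetful functor $U$. Foltz's theorem then guarantees that this inclusion of projective sketches induces a functor possessing a left adjoint $F$; transporting along $Mod(\mathcal{E}_\text{EtC})\cong\Et$ and $Mod(\mathcal{E}_\text{S})\cong\Cu\E$ shows that $U$ itself has a left adjoint $F$, exactly as in the commutative square used in \ref{monad-strict}. The adjunction $F\dashv U$ then produces the monad $\mathbb{W}=(W,\eta,\nu)$ on $\Cu\E$ with $W=U F$, with $\eta$ the unit of the adjunction, and with $\nu=U\varepsilon F$ where $\varepsilon$ is the counit.

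The main obstacle I expect is purely organisational: correctly encoding the partiality and the inductive, dimension-by-dimension nature of the bracketing operations as limit cones in $\mathcal{E}_\text{EtC}$. Because the source and target of a bracket $[\alpha,\beta]^{n}_{n+1,j}$ is again a bracket one dimension down, but with a shifted index, the sketch has to be built in infinitely many generations, each feeding into the next; one must verify at every stage that the domains $\underline{M}^s_{n,j}$ and $\underline{M}^t_{n,j}$ are realised as the intended pullbacks and that the index-shift axioms are matched by the corresponding arrows. Once $\mathcal{E}_\text{EtC}$ is arranged so that its models are visibly the quintuples $\mathbb{E}=(M,C,\pi,([-;-]^{n}_{n+1,j}),([-;-]^{n,\gamma}_{n+1,j}))$ subject to the stated axioms, the existence of $F$, and hence of $\mathbb{W}$, follows formally from Foltz's theorem just as in \ref{monad-strict}.
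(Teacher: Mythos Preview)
Your proposal is correct and follows essentially the same route as the paper: the paper likewise shows that $\Et$ is the category of models of a projective sketch $\mathcal{E}_\text{E}$ containing $\mathcal{E}_\text{S}$, invokes Foltz's theorem to obtain the left adjoint, and then spells out the cones and commutative diagrams encoding the domains $\underline{M}_n$, $\underline{M}^{s}_{n,j}$, $\underline{M}^{t}_{n,j}$ as pullbacks together with the bracket operations and their axioms. The only organisational difference is that the paper structures the sketch in just two explicit generations (the second being needed to form $\underline{M}^{\pm}_{n,j}$ as pullbacks of arrows produced in the first), rather than the open-ended tower you anticipate.
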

\begin{proof}

The proof is very similar to those in \cite{penon,kamel} : Actually it is not difficult to see that 
the category $\Et$ and the category $\mathbb{C}\E$
are both projectively sketchable. The sketch of cubical sets is denoted by $\mathcal{E}_\text{S}$ (see \ref{cub-sets})
and the sketch of the cubical categorical stretchings is denoted by $\mathcal{E}_\text{E}$. Main parts of
this sketch is described just below, and we see that $\mathcal{E}_\text{E}$ contains $\mathcal{E}_\text{S}$ :
\begin{tikzcd}
\mathcal{E}_\text{S}\arrow[rr,"j",hook]&&\mathcal{E}_\text{E}
\end{tikzcd},
and is such that it induces a forgetful functor $\xymatrix{\Et\ar[rr]^{U}&&\mathbb{C}\E}$
such that we have the commutative diagram 

\begin{tikzcd}
Mod(\mathcal{E}_\text{E})\arrow[dd,"iso"{left}]\arrow[rr,"Mod(j)"]&&Mod(\mathcal{E}_\text{S})\arrow[dd,"iso"]\\\\
\Et\arrow[rr,"U"]&&\Cu\E
\end{tikzcd}

 which shows that  $U$ is right adjoint by the theorem of Foltz \cite{foltz}.
  
  Actually in \ref{strict-infty} we described the sketch $\mathcal{E}_\text{M}$ of cubical $\infty$-magmas, which where used to describe in \ref{sketch-c} main part of the 
  sketch $\mathcal{E}_\text{C}$ of cubical strict $\infty$-categories. Thus we have already some part of the sketch $\mathcal{E}_\text{E}$ that we complete by
  sketching operations  $[-;-]^{n}_{n+1,j}$ and $[-;-]^{n,\gamma}_{n+1,j}$ plus their axioms. With previous descriptions of sketches, and 
the one below, we shall see that we obtain the following inclusions
of sketches :

$$\begin{tikzcd}
\mathcal{E}_\text{S}\arrow[rr,hook]&&\mathcal{E}_\text{M}
\arrow[rr,hook]&&
\mathcal{E}_\text{C}\arrow[rr,hook]&&\mathcal{E}_\text{E} 
\end{tikzcd}$$
  
\begin{description}
  
\item \underline{Description of $\mathcal{E}_\text{E}$}

\begin{itemize}

\item In the first generation we start with the following four cones :

\begin{tikzcd}
\underline{M_n}\arrow[dd,"\pi^{n}_{0}"{left}]\arrow[rr,"\pi^{n}_{1}"]
&&M_{n}\arrow[dd,"\pi_n"]\\\\
M_n\arrow[rr,"\pi_n"{below}]&&Z_{n}\end{tikzcd}
\qquad\begin{tikzcd}
\underline{M}^{s}_{n,j}\arrow[rr,"\pi^{n,s}_{1}"]\arrow[dd,"\pi^{n,s}_{0}"{left}]
&&M_{n}\arrow[dd,"s^{n}_{n-1,j}"]\\\\
M_{n}\arrow[rr,"s^{n}_{n-1,j}"{below}]&&Z_{n}
\end{tikzcd}\

\begin{tikzcd}
\underline{M}^{t}_{n,j}\arrow[rr,"\pi^{n,t}_{1}"]\arrow[dd,"\pi^{n,t}_{0}"{left}]
&&M_{n}\arrow[dd,"t^{n}_{n-1,j}"]\\\\
M_{n}\arrow[rr,"t^{n}_{n-1,j}"{below}]&&Z_{n}
\end{tikzcd}\qquad\begin{tikzcd}
&&M_{n}\times M_n\arrow[ddll,"p^n_0"{above}]\arrow[ddrr,"p^n_1"]\\\\
M_n&&&&M_n\end{tikzcd}

\item We consider the following commutative diagrams :

\begin{tikzcd}
&&\underline{M_n}\arrow[ddddll,"\pi^{n}_{0}"{left}]
\arrow[ddddrr,"\pi^{n}_{1}"]\arrow[ddd,dotted,"j^n"]\\\\\\
&&M_{n}\times M_n\arrow[dll,"p^n_0"]\arrow[drr,"p^n_1"{below}]\\
M_n&&&&M_n\end{tikzcd}

\begin{tikzcd}
&&\underline{M}^{s}_{n,j}\arrow[ddddll,"\pi^{n,s}_{0}"{left}]
\arrow[ddddrr,"\pi^{n,s}_{1}"]\arrow[ddd,dotted,"j^{n,s}"]\\\\\\
&&M_{n}\times M_n\arrow[dll,"p^n_0"]\arrow[drr,"p^n_1"{below}]\\
M_n&&&&M_n\end{tikzcd}\qquad\begin{tikzcd}
&&\underline{M}^{t}_{n,j}\arrow[ddddll,"\pi^{n,t}_{0}"{left}]
\arrow[ddddrr,"\pi^{n,t}_{1}"]\arrow[ddd,dotted,"j^{n,t}"]\\\\\\
&&M_{n}\times M_n\arrow[dll,"p^n_0"]\arrow[drr,"p^n_1"{below}]\\
M_n&&&&M_n\end{tikzcd}
\end{itemize}

\item We have the following commutative diagrams which define
the $"s\times s"$ :

\begin{itemize}

\item If $1\leq i<j\leq n+1$

\begin{itemize}
\item for sources

\begin{tikzcd}
\underline{M_n}
\arrow[rrr,"\pi^{n}_{1}"]\arrow[dd,"\pi^{n}_{0}"{left}]
\arrow[rrd,dotted,"s^{n}_{n-1,i}\underset{\pi}\times s^{n}_{n-1,i}"]&&&M_n
\arrow[rrd,"s^{n}_{n-1,i}"]\\
&&\underline{M_{n-1}}
\arrow[dd,"\pi^{n-1}_{0}"{left}]\arrow[rrr,"\pi^{n-1}_{1}"]&&&M_{n-1}\arrow[dd,"\pi_{n-1}"]\\
M_{n}\arrow[rrd,"s^{n}_{n-1,i}"{below}]\\
&&M_{n-1}\arrow[rrr,"\pi_{n-1}"{below}]&&&Z_{n-1}
\end{tikzcd}

\item for targets

\begin{tikzcd}
\underline{M_n}
\arrow[rrr,"\pi^{n}_{1}"]\arrow[dd,"\pi^{n}_{0}"{left}]
\arrow[rrd,dotted,"t^{n}_{n-1,i}\underset{\pi}\times t^{n}_{n-1,i}"]&&&M_n
\arrow[rrd,"t^{n}_{n-1,i}"]\\
&&\underline{M_{n-1}}
\arrow[dd,"\pi^{n-1}_{0}"{left}]\arrow[rrr,"\pi^{n-1}_{1}"]&&&M_{n-1}\arrow[dd,"\pi_{n-1}"]\\
M_{n}\arrow[rrd,"t^{n}_{n-1,i}"{below}]\\
&&M_{n-1}\arrow[rrr,"\pi_{n-1}"{below}]&&&Z_{n-1}
\end{tikzcd}
\end{itemize}

If $1\leq j<i\leq n+1$
\begin{itemize}

\item for sources

\begin{tikzcd}
\underline{M_n}
\arrow[rrr,"\pi^{n}_{1}"]\arrow[dd,"\pi^{n}_{0}"{left}]
\arrow[rrd,dotted,"s^{n}_{n-1,i-1}\underset{\pi}\times s^{n}_{n-1,i-1}"]&&&M_n
\arrow[rrd,"s^{n}_{n-1,i-1}"]\\
&&\underline{M_{n-1}}
\arrow[dd,"\pi^{n-1}_{0}"{left}]\arrow[rrr,"\pi^{n-1}_{1}"]&&&M_{n-1}\arrow[dd,"\pi_{n-1}"]\\
M_{n}\arrow[rrd,"s^{n}_{n-1,i-1}"{below}]\\
&&M_{n-1}\arrow[rrr,"\pi_{n-1}"{below}]&&&Z_{n-1}
\end{tikzcd}

\item for targets

\begin{tikzcd}
\underline{M_n}
\arrow[rrr,"\pi^{n}_{1}"]\arrow[dd,"\pi^{n}_{0}"{left}]
\arrow[rrd,dotted,"t^{n}_{n-1,i-1}\underset{\pi}\times t^{n}_{n-1,i-1}"]&&&M_n
\arrow[rrd,"t^{n}_{n-1,i-1}"]\\
&&\underline{M_{n-1}}
\arrow[dd,"\pi^{n-1}_{0}"{left}]\arrow[rrr,"\pi^{n-1}_{1}"]&&&M_{n-1}\arrow[dd,"\pi_{n-1}"]\\
M_{n}\arrow[rrd,"t^{n}_{n-1,i-1}"{below}]\\
&&M_{n-1}\arrow[rrr,"\pi_{n-1}"{below}]&&&Z_{n-1}
\end{tikzcd}
\end{itemize}
\end{itemize}
\item In the following diagrams we have $\rho\in\{s,t\}$
\begin{itemize}

\item If $1\leq i<j\leq n$
\begin{itemize}

\item for sources

\begin{tikzcd}
\underline{M}^{\rho}_{n,j}\arrow[rrr,"\pi^{n,\rho}_{1}"]
\arrow[dd,"\pi^{n,\rho}_{0}"{left}]
\arrow[rrd,dotted,"s^{n}_{n-1,i}
\underset{\rho}\times s^{n}_{n-1,i}"]&&&M_n
\arrow[rrd,"s^{n}_{n-1,i}"]\\
&&\underline{M}^\rho_{n-1,j-1}
\arrow[dd,"\pi^{n-1,\rho}_{0}"{left}]\arrow[rrr,"
\pi^{n-1,\rho}_{1}"]&&&M_{n-1}\arrow[dd,"s^{n-1}_{n-2,j-1}"]\\
M_{n}\arrow[rrd,"s^{n}_{n-1,i}"{below}]\\
&&M_{n-1}\arrow[rrr,"s^{n-1}_{n-2,j-1}"{below}]&&&M_{n-2}
\end{tikzcd}

\item for targets

\begin{tikzcd}
\underline{M}^{\rho}_{n,j}
\arrow[rrr,"\pi^{n,\rho}_{1}"]
\arrow[dd,"\pi^{n,\rho}_{0}"{left}]
\arrow[rrd,dotted,"t^{n}_{n-1,i}
\underset{\rho}\times t^{n}_{n-1,i}"]&&&M_n
\arrow[rrd,"t^{n}_{n-1,i}"]\\
&&\underline{M}^\rho_{n-1,j-1}
\arrow[dd,"\pi^{n-1,\rho}_{0}"{left}]
\arrow[rrr,"\pi^{n-1,\rho}_{1}"]&&&M_{n-1}
\arrow[dd,"t^{n-1}_{n-2,j-1}"]\\
M_{n}\arrow[rrd,"t^{n}_{n-1,i}"{below}]\\
&&M_{n-1}\arrow[rrr,"t^{n-1}_{n-2,j-1}"{below}]&&&M_{n-2}
\end{tikzcd}
\end{itemize}

If $1\leq j<i\leq n+1$
\begin{itemize}
\item for sources

\begin{tikzcd}
\underline{M}^{\rho}_{n,j}\arrow[rrr,"
\pi^{n,\gamma}_{1}"]
\arrow[dd,"\pi^{n,\rho}_{0}"{left}]
\arrow[rrd,dotted,"s^{n}_{n-1,i-1}
\underset{\rho}\times s^{n}_{n-1,i-1}"]&&&M_n
\arrow[rrd,"s^{n}_{n-1,i-1}"]\\
&&\underline{M}^{\rho}_{n-1,j}
\arrow[dd,"\pi^{n-1,\rho}_{0}"{left}]
\arrow[rrr,"\pi^{n-1,\rho}_{1}"]&&&
M_{n-1}\arrow[dd,"s^{n-1}_{n-2,j}"]\\
M_{n}\arrow[rrd,"s^{n}_{n-1,i-1}"{below}]\\
&&M_{n-1}\arrow[rrr,"s^{n-1}_{n-2,j}"{below}]&&&M_{n-2}
\end{tikzcd}

\item for targets

\begin{tikzcd}
\underline{M}^{\rho}_{n,j}
\arrow[rrr,"\pi^{n,\rho}_{1}"]
\arrow[dd,"\pi^{n,\rho}_{0}"{left}]
\arrow[rrd,dotted,"t^{n}_{n-1,i-1}
\underset{\rho}\times t^{n}_{n-1,i-1}"]&&&M_n
\arrow[rrd,"t^{n}_{n-1,i-1}"]\\
&&\underline{M}^{\rho}_{n-1,j}
\arrow[dd,"\pi^{n-1,\rho}_{0}"{left}]
\arrow[rrr,"\pi^{n-1,\rho}_{1}"]&&&
M_{n-1}\arrow[dd,"t^{n-1}_{n-2,j}"]\\
M_{n}\arrow[rrd,"t^{n}_{n-1,i-1}"{below}]\\
&&M_{n-1}\arrow[rrr,"t^{n-1}_{n-2,j}"{below}]&&&M_{n-2}
\end{tikzcd}

\end{itemize}
\end{itemize}

\item \underline{Second generation}\footnote{Indeed the sketch $\mathcal{E}_E$ has "two floors", if we follow the French 
terminology "\textit{esquisses à deux étages}" in \cite{lair-triplabilite}. We have preferred the name "\textit{generation}" instead,
in order to provide the intuition that these operations have "operations ancestors".}
\begin{itemize}
\item We consider the two cones :

\begin{tikzcd}
\underline{M}^{-}_{n,j}\arrow[rr,"\pi^{n,-}_{1}"]\arrow[dd,"\pi^{n,-}_{0}"{left}]
&&\underline{M_n}\arrow[dd,"j^{n}"]\\\\
\underline{M}^{s}_{n,j}\arrow[rr,"j^{n,s}"{below}]&&M_{n}\times M_n
\end{tikzcd}\qquad\begin{tikzcd}
\underline{M}^{+}_{n,j}\arrow[rr,"\pi^{n,+}_{1}"]\arrow[dd,"\pi^{n,+}_{0}"{left}]
&&\underline{M_n}\arrow[dd,"j^n"]\\\\
\underline{M}^{t}_{n,j}\arrow[rr,"j^{n,t}"{below}]&&M_{n}\times M_n
\end{tikzcd}

\item We denote $k^-=j^{n,s}\circ \pi^{n,-}_{0}=j^{n}\circ \pi^{n,-}_{1}$ et 
$k^+=j^{n,t}\circ \pi^{n,+}_{0}=j^{n}\circ \pi^{n,+}_{1}$, and in this 
case we obtain the following commutative diagrams :

\begin{tikzcd}
&&\underline{M}^{-}_{n,j}\arrow[ddddll,"q^{n,-}_{0}"{left}]
\arrow[ddddrr,"q^{n,-}_{1}"]\arrow[ddd,dotted,"k^-"]\\\\\\
&&M_{n}\times M_n\arrow[dll,"p^n_0"]\arrow[drr,"p^n_1"{below}]\\
M_n&&&&M_n\end{tikzcd}\qquad\begin{tikzcd}
&&\underline{M}^{+}_{n,j}\arrow[ddddll,"q^{n,+}_{0}"{left}]
\arrow[ddddrr,"q^{n,+}_{1}"]\arrow[ddd,dotted,"k+"]\\\\\\
&&M_{n}\times M_n\arrow[dll,"p^n_0"]\arrow[drr,"p^n_1"{below}]\\
M_n&&&&M_n\end{tikzcd}
\end{itemize}

\item And we obtain the following commutative diagrams which give
the definition of the $s\times s$ for sets $\underline{M}^{-}_{n,j}$

\begin{itemize}
\item If $1\leq i<j\leq n$
\begin{itemize}

\item for sources

\begin{tikzcd}
\underline{M}^{-}_{n,j}\arrow[rrr,"\pi^{n,-}_{1}"]
\arrow[dd,"\pi^{n,-}_{0}"{left}]
\arrow[rrd,dotted,"s^{n}_{n-1,i}
\underset{-}\times s^{n}_{n-1,i}"]&&&\underline{M_n}
\arrow[rrd,"s^{n}_{n-1,i}
\underset{\pi}\times s^{n}_{n-1,i}"]\\
&&\underline{M}^{-}_{n-1,j}
\arrow[dd,"\pi^{n-1,-}_{0}"{left}]\arrow[rrr,"
\pi^{n-1,-}_{1}"]&&&M_{n-1}\arrow[dd,"j^{n-1}"]\\
\underline{M}^{s}_{n,j}\arrow[rrd,"s^{n}_{n-1,i}
\underset{s}\times s^{n}_{n-1,i}"{below}]\\
&&\underline{M}^{s}_{n-1,j}\arrow[rrr,"j^{n-1,s}"{below}]&&&
M_{n-1}\times M_{n-1}
\end{tikzcd}

\item for targets

\begin{tikzcd}
\underline{M}^{-}_{n,j}\arrow[rrr,"\pi^{n,-}_{1}"]
\arrow[dd,"\pi^{n,-}_{0}"{left}]
\arrow[rrd,dotted,"t^{n}_{n-1,i}
\underset{-}\times t^{n}_{n-1,i}"]&&&\underline{M_n}
\arrow[rrd,"t^{n}_{n-1,i}
\underset{\pi}\times t^{n}_{n-1,i}"]\\
&&\underline{M}^{-}_{n-1,j}
\arrow[dd,"\pi^{n-1,-}_{0}"{left}]\arrow[rrr,"
\pi^{n-1,-}_{1}"]&&&M_{n-1}\arrow[dd,"j^{n-1}"]\\
\underline{M}^{s}_{n,j}\arrow[rrd,"t^{n}_{n-1,i}
\underset{s}\times t^{n}_{n-1,i}"{below}]\\
&&\underline{M}^{s}_{n-1,j}\arrow[rrr,"j^{n-1,s}"{below}]&&&
M_{n-1}\times M_{n-1}
\end{tikzcd}

\end{itemize}

\item If $1\leq j<i\leq n+1$
\begin{itemize}

\item for sources

\begin{tikzcd}
\underline{M}^{-}_{n,j}\arrow[rrr,"\pi^{n,-}_{1}"]
\arrow[dd,"\pi^{n,-}_{0}"{left}]
\arrow[rrd,dotted,"s^{n}_{n-1,i-1}
\underset{-}\times s^{n}_{n-1,i-1}"]&&&\underline{M_n}
\arrow[rrd,"s^{n}_{n-1,i-1}
\underset{\pi}\times s^{n}_{n-1,i-1}"]\\
&&\underline{M}^{-}_{n-1,j}
\arrow[dd,"\pi^{n-1,-}_{0}"{left}]\arrow[rrr,"
\pi^{n-1,-}_{1}"]&&&M_{n-1}\arrow[dd,"j^{n-1}"]\\
\underline{M}^{s}_{n,j}\arrow[rrd,"s^{n}_{n-1,i}
\underset{s}\times s^{n}_{n-1,i}"{below}]\\
&&\underline{M}^{s}_{n-1,j}\arrow[rrr,"j^{n-1,s}"{below}]&&&
M_{n-1}\times M_{n-1}
\end{tikzcd}

\item for targets

\begin{tikzcd}
\underline{M}^{-}_{n,j}\arrow[rrr,"\pi^{n,-}_{1}"]
\arrow[dd,"\pi^{n,-}_{0}"{left}]
\arrow[rrd,dotted,"t^{n}_{n-1,i-1}
\underset{-}\times t^{n}_{n-1,i-1}"]&&&\underline{M_n}
\arrow[rrd,"t^{n}_{n-1,i-1}
\underset{\pi}\times t^{n}_{n-1,i-1}"]\\
&&\underline{M}^{-}_{n-1,j}
\arrow[dd,"\pi^{n-1,-}_{0}"{left}]\arrow[rrr,"
\pi^{n-1,-}_{1}"]&&&M_{n-1}\arrow[dd,"j^{n-1}"]\\
\underline{M}^{s}_{n,j}\arrow[rrd,"t^{n}_{n-1,i}
\underset{s}\times t^{n}_{n-1,i}"{below}]\\
&&\underline{M}^{s}_{n-1,j}\arrow[rrr,"j^{n-1,s}"{below}]&&&
M_{n-1}\times M_{n-1}
\end{tikzcd}

\end{itemize}
\end{itemize}

\item And we obtain the following commutative diagrams which give
the definition of the $s\times s$ for sets $\underline{M}^{+}_{n,j}$

\begin{itemize}

\item If $1\leq i<j\leq n$

\begin{itemize}

\item for sources

\begin{tikzcd}
\underline{M}^{+}_{n,j}\arrow[rrr,"\pi^{n,+}_{1}"]
\arrow[dd,"\pi^{n,+}_{0}"{left}]
\arrow[rrd,dotted,"s^{n}_{n-1,i}
\underset{+}\times s^{n}_{n-1,i}"]&&&\underline{M_n}
\arrow[rrd,"s^{n}_{n-1,i}
\underset{\pi}\times s^{n}_{n-1,i}"]\\
&&\underline{M}^{+}_{n-1,j}
\arrow[dd,"\pi^{n-1,+}_{0}"{left}]\arrow[rrr,"
\pi^{n-1,+}_{1}"]&&&M_{n-1}\arrow[dd,"j^{n-1}"]\\
\underline{M}^{t}_{n,j}\arrow[rrd,"s^{n}_{n-1,i}
\underset{t}\times s^{n}_{n-1,i}"{below}]\\
&&\underline{M}^{t}_{n-1,j}\arrow[rrr,"j^{n-1,t}"{below}]&&&
M_{n-1}\times M_{n-1}
\end{tikzcd}

\item for targets

\begin{tikzcd}
\underline{M}^{+}_{n,j}\arrow[rrr,"\pi^{n,+}_{1}"]
\arrow[dd,"\pi^{n,+}_{0}"{left}]
\arrow[rrd,dotted,"t^{n}_{n-1,i}
\underset{+}\times t^{n}_{n-1,i}"]&&&\underline{M_n}
\arrow[rrd,"t^{n}_{n-1,i}
\underset{\pi}\times t^{n}_{n-1,i}"]\\
&&\underline{M}^{+}_{n-1,j}
\arrow[dd,"\pi^{n-1,+}_{0}"{left}]\arrow[rrr,"
\pi^{n-1,+}_{1}"]&&&M_{n-1}\arrow[dd,"j^{n-1}"]\\
\underline{M}^{t}_{n,j}\arrow[rrd,"t^{n}_{n-1,i}
\underset{t}\times t^{n}_{n-1,i}"{below}]\\
&&\underline{M}^{t}_{n-1,j}\arrow[rrr,"j^{n-1,t}"{below}]&&&
M_{n-1}\times M_{n-1}
\end{tikzcd}
\end{itemize}

\item If $1\leq j<i\leq n+1$

\begin{itemize}

\item for sources

\begin{tikzcd}
\underline{M}^{+}_{n,j}\arrow[rrr,"\pi^{n,+}_{1}"]
\arrow[dd,"\pi^{n,+}_{0}"{left}]
\arrow[rrd,dotted,"s^{n}_{n-1,i-1}
\underset{+}\times s^{n}_{n-1,i-1}"]&&&\underline{M_n}
\arrow[rrd,"s^{n}_{n-1,i-1}
\underset{\pi}\times s^{n}_{n-1,i-1}"]\\
&&\underline{M}^{+}_{n-1,j}
\arrow[dd,"\pi^{n-1,+}_{0}"{left}]\arrow[rrr,"
\pi^{n-1,+}_{1}"]&&&M_{n-1}\arrow[dd,"j^{n-1}"]\\
\underline{M}^{t}_{n,j}\arrow[rrd,"s^{n}_{n-1,i-1}
\underset{t}\times s^{n}_{n-1,i-1}"{below}]\\
&&\underline{M}^{t}_{n-1,j}\arrow[rrr,"j^{n-1,t}"{below}]&&&
M_{n-1}\times M_{n-1}
\end{tikzcd}

\item for targets

\begin{tikzcd}
\underline{M}^{+}_{n,j}\arrow[rrr,"\pi^{n,+}_{1}"]
\arrow[dd,"\pi^{n,+}_{0}"{left}]
\arrow[rrd,dotted,"t^{n}_{n-1,i-1}
\underset{+}\times t^{n}_{n-1,i-1}"]&&&\underline{M_n}
\arrow[rrd,"t^{n}_{n-1,i-1}
\underset{\pi}\times t^{n}_{n-1,i-1}"]\\
&&\underline{M}^{+}_{n-1,j}
\arrow[dd,"\pi^{n-1,+}_{0}"{left}]\arrow[rrr,"
\pi^{n-1,+}_{1}"]&&&M_{n-1}\arrow[dd,"j^{n-1}"]\\
\underline{M}^{t}_{n,j}\arrow[rrd,"t^{n}_{n-1,i-1}
\underset{t}\times t^{n}_{n-1,i-1}"{below}]\\
&&\underline{M}^{t}_{n-1,j}\arrow[rrr,"j^{n-1,t}"{below}]&&&
M_{n-1}\times M_{n-1}
\end{tikzcd}
\end{itemize}

\end{itemize}

\item We consider the following commutative diagrams :

\begin{itemize}

\item If $1\leq i<j\leq n+1$

\begin{tikzcd}
\underline{M_n}\arrow[dd,"s^{n}_{n-1,i}\underset{\pi}\times s^{n}_{n-1,i}"{left}]
\arrow[rrr,"{[-;-]}^{n}_{n+1,j}"]&&&M_{n+1}
\arrow[dd,"s^{n}_{n-1,i}"]\\\\
\underline{M_{n-1}}\arrow[rrr,"{[-;-]}^{n-1}_{n,j}"{below}]&&&M_n
\end{tikzcd}\qquad\begin{tikzcd}
\underline{M_n}\arrow[dd,"t^{n}_{n-1,i}\underset{\pi}\times t^{n}_{n-1,i}"{left}]
\arrow[rrr,"{[-;-]}^{n}_{n+1,j}"]&&&M_{n+1}
\arrow[dd,"t^{n}_{n-1,i}"]\\\\
\underline{M_{n-1}}\arrow[rrr,"{[-;-]}^{n-1}_{n,j}"{below}]&&&M_n
\end{tikzcd}

\begin{tikzcd}
\underline{M}^{\gamma}_{n,j}\arrow[dd,"s^{n}_{n-1,i}\underset{\gamma}\times s^{n}_{n-1,i}"{left}]
\arrow[rrr,"{[-;-]}^{n,\gamma}_{n+1,j}"]&&&M_{n+1}
\arrow[dd,"s^{n}_{n-1,i}"]\\\\
\underline{M_{n-1}}\arrow[rrr,"{[-;-]}^{n-1,\gamma}_{n,j}"{below}]&&&M_n
\end{tikzcd}\qquad\begin{tikzcd}
\underline{M}^{\gamma}_{n,j}\arrow[dd,"t^{n}_{n-1,i}\underset{\gamma}\times t^{n}_{n-1,i}"{left}]
\arrow[rrr,"{[-;-]}^{n,\gamma}_{n+1,j}"]&&&M_{n+1}
\arrow[dd,"t^{n}_{n-1,i}"]\\\\
\underline{M_{n-1}}\arrow[rrr,"{[-;-]}^{n-1,\gamma}_{n,j}"{below}]&&&M_n
\end{tikzcd}

\item If $1\leq j<i\leq n+1$

\begin{tikzcd}
\underline{M_n}\arrow[dd,"s^{n}_{n-1,i-1}\underset{\pi}\times s^{n}_{n-1,i-1}"{left}]
\arrow[rrr,"{[-;-]}^{n}_{n+1,j}"]&&&M_{n+1}
\arrow[dd,"s^{n}_{n-1,i}"]\\\\
\underline{M_{n-1}}\arrow[rrr,"{[-;-]}^{n-1}_{n,j}"{below}]&&&M_n
\end{tikzcd}\qquad\begin{tikzcd}
\underline{M_n}\arrow[dd,"t^{n}_{n-1,i-1}\underset{\pi}\times t^{n}_{n-1,i-1}"{left}]
\arrow[rrr,"{[-;-]}^{n}_{n+1,j}"]&&&M_{n+1}
\arrow[dd,"t^{n}_{n-1,i}"]\\\\
\underline{M_{n-1}}\arrow[rrr,"{[-;-]}^{n-1}_{n,j}"{below}]&&&M_n
\end{tikzcd}

\begin{tikzcd}
\underline{M}^{\gamma}_{n,j}\arrow[dd,"s^{n}_{n-1,i-1}\underset{\gamma}\times s^{n}_{n-1,i-1}"{left}]
\arrow[rrr,"{[-;-]}^{n,\gamma}_{n+1,j}"]&&&M_{n+1}
\arrow[dd,"s^{n}_{n-1,i}"]\\\\
\underline{M}^{\gamma}_{n-1,j}\arrow[rrr,"{[-;-]}^{n-1,\gamma}_{n,j}"{below}]&&&M_n
\end{tikzcd}\qquad\begin{tikzcd}
\underline{M_n}\arrow[dd,"t^{n}_{n-1,i-1}\underset{\gamma}\times t^{n}_{n-1,i-1}"{left}]
\arrow[rrr,"{[-;-]}^{n,\gamma}_{n+1,j}"]&&&M_{n+1}
\arrow[dd,"t^{n}_{n-1,i}"]\\\\
\underline{M}^{\gamma}_{n-1,j}\arrow[rrr,"{[-;-]}^{n-1,\gamma}_{n,j}"{below}]&&&M_n
\end{tikzcd}

\item If $i=j$
\begin{itemize}

\item for the operations ${[-;-]}^{n}_{n+1,j}$

\begin{tikzcd}
\underline{M_n}
\arrow[rrdd,"\pi^{n}_{0}"{left}]\arrow[rrrr,"{[-;-]}^{n}_{n+1,j}"]&&&&
M_{n+1}\arrow[lldd,"s^{n+1}_{n,j}"]\\\\
&&M_n
\end{tikzcd}\qquad\begin{tikzcd}
\underline{M_n}
\arrow[rrdd,"\pi^{n}_{1}"{left}]
\arrow[rrrr,"{[-;-]}^{n}_{n+1,j}"]&&&&
M_{n+1}\arrow[lldd,"t^{n+1}_{n,j}"]\\\\
&&M_n
\end{tikzcd}

\item for the operations ${[-;-]}^{n,\gamma}_{n+1,j}$

\begin{tikzcd}
\underline{M}^{\gamma}_{n,j}
\arrow[rrdd,"q^{n,\gamma}_{0}"{left}]
\arrow[rrrr,"{[-;-]}^{n,\gamma}_{n+1,j}"]&&&&
M_{n+1}\arrow[lldd,"s^{n+1}_{n,j}"]\\\\
&&M_n
\end{tikzcd}\qquad\begin{tikzcd}
\underline{M}^{\gamma}_{n,j}
\arrow[rrdd,"q^{n,\gamma}_{1}"{left}]
\arrow[rrrr,"{[-;-]}^{n,\gamma}_{n+1,j}"]&&&&
M_{n+1}\arrow[lldd,"t^{n+1}_{n,j}"]\\\\
&&M_n
\end{tikzcd}

\item Other possible diagram for this definition :

\begin{tikzcd}
&&\underline{M_n}\arrow[ddddll,"\pi^{n}_{0}"{left}]\arrow[ddddrr,"\pi^{n}_{1}"]
\arrow[ddd,"{[-;-]}^{n}_{n+1,j}"]\\\\\\
&&M_{n+1}\arrow[dll,"s^{n+1}_{n,j}"]\arrow[drr,"t^{n+1}_{n,j}"{below}]\\
M_n&&&&M_n\end{tikzcd}

\begin{tikzcd}
&&\underline{M}^{\gamma}_{n,j}
\arrow[ddddll,"q^{n,\gamma}_{0}"{left}]
\arrow[ddddrr,"q^{n,\gamma}_{1}"]
\arrow[ddd,"{[-;-]}^{n,\gamma}_{n+1,j}"]\\\\\\
&&M_{n+1}\arrow[dll,"s^{n+1}_{n,j}"]\arrow[drr,"t^{n+1}_{n,j}"{below}]\\
M_n&&&&M_n\end{tikzcd}
\end{itemize}

\end{itemize}

\item \underline{Commutative diagrams for axioms} :

\begin{itemize}

\item For operations ${[-;-]}^{n}_{n+1,j}$

\[\begin{tikzcd}
\underline{M_n}\arrow[dd,"\pi^{n}_{0}"{left}]\arrow[rrrr,"{[-;-]}^{n}_{n+1,j}"]&&&&
M_{n+1}\arrow[dddd,"\pi_{n+1}"]\\\\
M_n\arrow[dd,"\pi_n"{left}]\\\\
Z_n\arrow[rrrr,"1^{n}_{n+1}"{below}]&&&&Z_{n+1}
\end{tikzcd}\qquad\begin{tikzcd}
\underline{M_n}\arrow[dd,"\pi^{n}_{1}"{left}]\arrow[rrrr,"{[-;-]}^{n}_{n+1,j}"]&&&&M_{n+1}
\arrow[dddd,"\pi_{n+1}"]\\\\
M_n\arrow[dd,"\pi_n"{left}]\\\\
Z_n\arrow[rrrr,"1^{n}_{n+1}"{below}]&&&&Z_{n+1}
\end{tikzcd}\] 

\item For operations ${[-;-]}^{n,\gamma}_{n+1,j}$

\[\begin{tikzcd}
\underline{M}^{\gamma}_{n,j}
\arrow[dd,"q^{n,\gamma}_{0}"{left}]
\arrow[rrrr,"{[-;-]}^{n,\gamma}_{n+1,j}"]&&&&
M_{n+1}\arrow[dddd,"\pi_{n+1}"]\\\\
M_n\arrow[dd,"\pi_n"{left}]\\\\
Z_n\arrow[rrrr,"1^{n,\gamma}_{n+1}"{below}]&&&&Z_{n+1}
\end{tikzcd}\qquad\begin{tikzcd}
\underline{M}^{\gamma}_{n,j}
\arrow[dd,"q^{n,\gamma}_{1}"{left}]
\arrow[rrrr,"{[-;-]}^{n,\gamma}_{n+1,j}"]&&&&M_{n+1}
\arrow[dddd,"\pi_{n+1}"]\\\\
M_n\arrow[dd,"\pi_n"{left}]\\\\
Z_n\arrow[rrrr,"1^{n,\gamma}_{n+1}"{below}]&&&&Z_{n+1}
\end{tikzcd}\] 

\end{itemize}

\item \underline{Comeback to the first generation}

We build the diagonal with the following commutative diagrams :

\begin{tikzcd}
M_{n}\arrow[ddddrr,"id"{left}]\arrow[ddrr,dotted,"\delta^n"]\arrow[ddrrrr,"id"]\\\\
&&\underline{M_n}\arrow[rr,"\pi^{n}_{1}"]\arrow[dd,"\pi^{n}_{0}"{left}]
&&M_{n}\arrow[dd,"\pi_n"]\\\\
&&M_{n}\arrow[rr,"\pi_n"{below}]&&Z_{n}
\end{tikzcd}

\begin{tikzcd}
M_{n}\arrow[ddddrr,"id"{left}]\arrow[ddrr,dotted,"\delta^n_{s}"]\arrow[ddrrrr,"id"]\\\\
&&\underline{M}^{s}_{n,j}\arrow[rr,"\pi^{n,s}_{1}"]\arrow[dd,"\pi^{n,s}_{0}"{left}]
&&M_{n}\arrow[dd,"s^{n}_{n-1,j}"]\\\\
&&M_{n}\arrow[rr,"s^{n}_{n-1,j}"{below}]&&M_{n-1}
\end{tikzcd}\qquad \begin{tikzcd}
M_{n}\arrow[ddddrr,"id"{left}]\arrow[ddrr,dotted,"\delta^n_{t}"]\arrow[ddrrrr,"id"]\\\\
&&\underline{M}^{t}_{n,j}\arrow[rr,"\pi^{n,t}_{1}"]\arrow[dd,"\pi^{n,t}_{0}"{left}]
&&M_{n}\arrow[dd,"t^{n}_{n-1,j}"]\\\\
&&M_{n}\arrow[rr,"t^{n}_{n-1,j}"{below}]&&M_{n-1}
\end{tikzcd}

\item \underline{Comeback to the second generation}

The previous diagrams generate the following commutative diagrams :

\begin{tikzcd}
M_{n}\arrow[ddddrr,"\delta^n_s"{left}]\arrow[ddrr,dotted,"\delta^n_{-}"]\arrow[ddrrrr,"\delta^n"]\\\\
&&\underline{M}^{-}_{n,j}\arrow[rr,"\pi^{n,-}_{1}"]\arrow[dd,"\pi^{n,-}_{0}"{left}]
&&\underline{M_n}\arrow[dd,"j^n"]\\\\
&&\underline{M}^{s}_{n,j}\arrow[rr,"j^{n,s}"{below}]&&
M_{n}\times M_{n}
\end{tikzcd}\qquad\begin{tikzcd}
M_{n}\arrow[ddddrr,"\delta^n_t"{left}]\arrow[ddrr,dotted,"\delta^n_{+}"]\arrow[ddrrrr,"\delta^n"]\\\\
&&\underline{M}^{+}_{n,j}\arrow[rr,"\pi^{n,+}_{1}"]\arrow[dd,"\pi^{n,+}_{0}"{left}]
&&\underline{M_n}\arrow[dd,"j^n"]\\\\
&&\underline{M}^{t}_{n,j}\arrow[rr,"j^{n,t}"{below}]&&
M_{n}\times M_{n}
\end{tikzcd} 

\item Then we obtain the following commutative diagrams of first generation for axioms of reflexivity of the operations ${[-;-]}^{n}_{n+1,j}$ :

\begin{tikzcd}
\underline{M_n}
\arrow[rrrrrr,"{[-;-]}^{n}_{n+1,j}"]&&&&&&M_{n+1}\\\\
M_n\arrow[uu,"\delta^n"]\arrow[uurrrrrr,"1^{n}_{n+1}"{below}]
\end{tikzcd}

\item And the following commutative diagrams of second generation for axioms of reflexivity of the operations ${[-;-]}^{n,\gamma}_{n+1,j}$ :

\begin{tikzcd}
\underline{M}^{\gamma}_{n,j}
\arrow[rrrrrr,"{[-;-]}^{n,\gamma}_{n+1,j}"]&&&&&&M_{n+1}\\\\
M_n\arrow[uu,"\delta^n_\gamma"]\arrow[uurrrrrr,"1^{n,\gamma}_{n+1}"{below}]
\end{tikzcd}
\end{description}

\end{proof}

\begin{definition}
\label{weak-def}
  Cubical weak $\infty$-categories are algebras for the monad $\mathbb{W}$ above.
\end{definition}

\label{example-coherence}
Let us show with a simple example how cubical weak $\infty$-categories provide a richer 
weakened structure than the one of globular weak $\infty$-categories : for simplicity
we show it inside an object $\mathbb{E}$ of $\Et$ :
\[ \mathbb{E}=(M,C,\pi,([-;-]^{n}_{n+1,j})_{n\in\mathbb{N};j\in\{1,...,n+1\}},
  ([-;-]^{n,\gamma}_{n+1,j})_{n\in\mathbb{N};j\in\{1,...,n\};\gamma\in\{-,+\}})\] 
Consider the following string in $M(1)$
$$\begin{tikzcd}
a\arrow[rr,"f"]&&b\arrow[rr,"g"]&&c\arrow[rr,"h"]&&d
\end{tikzcd}$$

and take the $1$-cells $x=(h\circ g)\circ f$ and $y=h\circ (g\circ f)$. Because 
these cells belong to $\underline{M^-_{1,0}}\cap\underline{M^+_{1,0}}$ we get the 
following $2$-cells 

   \begin{tikzcd}
a\arrow[rrrddd,phantom,"{[x,y]^1_{2,1}}"] \arrow[ddd,"1_a"{left}]\arrow[rrr,"(h\circ g)\circ f"] &&&d\arrow[ddd,"1_d"] \\\\\\
a\arrow[rrr,"h\circ (g\circ f)"{below}] &&&d
\end{tikzcd}\qquad\begin{tikzcd}
a\arrow[rrrddd,phantom,"{[x,y]^1_{2,2}}"] \arrow[ddd,"(h\circ g)\circ f"{left}]\arrow[rrr,"1_a"] &&&a
\arrow[ddd,"h\circ (g\circ f)"] \\\\\\
d\arrow[rrr,"1_d"{below}] &&&d
\end{tikzcd}

\begin{tikzcd}
a\arrow[rrrddd,phantom,"{[x,y]^{1,-}_{2,1}}"] \arrow[ddd,"h\circ (g\circ f)"{left}]\arrow[rrr,"(h\circ g)\circ f"] &&&d\arrow[ddd,"1_d"] \\\\\\
d\arrow[rrr,"1_d"{below}] &&&d
\end{tikzcd}\qquad\begin{tikzcd}
a\arrow[rrrddd,phantom,"{[x,y]^{1,+}_{2,1}}"] \arrow[ddd,"1_a"{left}]\arrow[rrr,"1_a"] &&&d
\arrow[ddd,"h\circ (g\circ f)"] \\\\\\
d\arrow[rrr,"(h\circ g)\circ f"{below}] &&&d
\end{tikzcd}

\begin{remark}
We could have defined cubical categorical stretchings slightly differently than those just above by means of using
just the operations $([-;-]^{n}_{n+1,j})_{n\in\mathbb{N};j\in\{1,...,n\}}$ to weakened the structure of cubical strict 
$\infty$-categories. Denote by $\I\mathbb{C}\mathbb{E}\text{tC}'$ the category of these slightly impoverished 
structures. We also have a forgetful functor 
\begin{tikzcd}
\I\mathbb{C}\mathbb{E}\text{tC}'\arrow[rr,"U'"]&&\Cu\E
\end{tikzcd}
which is right adjoint and which produce an other monad $\mathbb{W}'=(W',\eta',\nu')$ which algebras could be 
also considered as enough good models of cubical weak $\infty$-categories. Also we have an evident forgetful functor
\begin{tikzcd}
\I\mathbb{C}\mathbb{E}\text{tC}\arrow[rr,"U"]&&\I\mathbb{C}\mathbb{E}\text{tC}'
\end{tikzcd}
which is right adjoint and which produce a functor
\begin{tikzcd}
\mathbb{W}\text{-}\mathbb{A}\text{lg}\arrow[rr,"U"]&&\mathbb{W}\text{-}\mathbb{A}\text{lg}'
\end{tikzcd}
which shows that the models that we have chosen for our article are also models for these impoverished 
structures. And our choice to add operations $([-;-]^{n,\gamma}_{n+1,j})_{n\in\mathbb{N};j\in\{1,...,n\};\gamma\in\{-,+\}}$
to get our models of cubical weak $\infty$-categories 
is similar to the one who choses cubical strict $\infty$-categories with connections instead of considering it without 
connections. We believe that our choice gives not only more refined models than those of 
the category $\mathbb{W}\text{-}\mathbb{A}\text{lg}'$ but also is in fact really necessary for a good approach 
of cubical weak $\infty$-categories, where formalism of connections are implicit and used in our weakened 
structures.
\end{remark}

\begin{remark}

In \cite{brown} the authors have proved that the category of cubical strict $\infty$-categories is 
equivalent to the category of globular strict $\infty$-categories. We suspect that such phenomena 
is still right in the world of weak models. Let us be more precise about what we are saying : denote by 
$\mathbb{P}$ the Penon's monad on the category of globular sets
(see \cite{penon}) which algebras are particularly nice models of globular weak $\infty$-categories
(see for example \cite{bat-pen,cheng-makkai}). It is suspected (see \cite{remy-thesis})
that its category of algebras $\mathbb{P}\text{-}\mathbb{A}\text{lg}$ can be equipped with 
a \textit{canonical Quillen model structure} similar to the one build in \cite{folk} for strict globular 
$\infty$-categories, and we also suspect that $\mathbb{W}\text{-}\mathbb{A}\text{lg}$ can
be equipped with such canonical Quillen model structure. Thus a weak version of the article 
\cite{brown} should be that the category $\mathbb{P}\text{-}\mathbb{A}\text{lg}$ is Quillen equivalent 
to $\mathbb{W}\text{-}\mathbb{A}\text{lg}$ when these categories are equipped with their
canonical Quillen model structure.

\end{remark}

\subsection{Magmatic properties of cubical weak $\infty$-categories}
\label{magma}
Consider a cubical weak $\infty$-category 
\begin{tikzcd}
W(C)\arrow[r,"v"]&C
\end{tikzcd}. In this monadic presentation, $W(C)$ has to be thought as the free
 cubical weak $\infty$-category representing the underlying syntax with which all algebras
 with underlying cubical set $C$ are interpretations of it via their
 morphisms structural. For example here $v$ is the morphism structural which plays
 the role of interpreting in $C$ the "syntax" $W(C)$, and thus put on $C$ a structure
 of $\mathbb{W}$-algebra. We shall distinguished well notations of operations inside $W(C)$
 and inside $C$ in order to separate the syntactic part from the model part of our
 algebras. For example the operations of compositions shall be denoted $\circ^{n}_{j}$
  in the models, whereas we shall use the notation $\star^{n}_{j}$ instead when we work 
 in the free models. The reflexions are denoted $\iota^{n}_{n+1,j}$ in the models and
 $1^{n}_{n+1,j}$ in the free models. The connections are denoted $\iota^{n,\alpha}_{n+1,j}$ 
 in the models and $1^{n,\alpha}_{n+1,j}$ in the free models. Definitions of operations
 for models use those for free models and the interpretative nature of \begin{tikzcd}
W(C)\arrow[r,"v"]&C
\end{tikzcd} emerges then with the axiomatic of algebras for monads : for example 
we consider first the following definition of operations on $C$ :
\begin{enumerate}[(i)]
\item If $a$, $b\in C(n)$ are such that $s^{n}_{j}(b)=t^{n}_{j}(a)$ for $j\in\{1,...,n\}$
then we put $a\circ^{n}_{j}b=v_n(\eta(a)\star^{n}_{j}\eta(b))$ 

\item If $a\in C(n)$ is an $n$-cell then we put 
$\iota^{n}_{n+1,j}(a)=v_{n+1}(1^{n}_{n+1,j}(\eta(a)))$, 
$n\in\mathbb{N}$, $j\in\llbracket 1,n+1 \rrbracket$

\item If $a\in C(n)$ is an $n$-cell then we put 
$\iota^{n,\gamma}_{n+1,j}(a)=v_{n+1}(1^{n,\gamma}_{n+1,j}(\eta(a)))$, 
$n\geq 1$, $j\in\llbracket 1,n \rrbracket$, $\gamma\in\{-,+\}$
\end{enumerate}
Thus $v$ puts on $C$ a cubical $\infty$-magma structure and its interpretative nature  
is primarily expressed by the fact that it is a morphism of cubical $\infty$-magmas between
the free cubical $\infty$-magma $W(C)$ and this cubical $\infty$-magma on $C$. It
is the axioms of algebras which show us such important fact : actually we need to show that 
$v(a\star^{n}_{j}b)=v(a)\circ^{n}_{j}v(b)$, $v(1^{n}_{n+1,j}(a))=\iota^{n}_{n+1,j}(v(a))$, 
$v(1^{n,\gamma}_{n+1,j}(a))=\iota^{n,\gamma}_{n+1,j}(v(a))$. Let us show the first
equality : 
\begin{align*}
v(a)\circ^{n}_{j}v(b) &= v(\eta(v(a))\star^{n}_{j}\eta(v(b))) \\
        &= v(W(v)\eta_{W(C)}(a)\star^{n}_{j} W(v)\eta_{W(C)}(b)) \\
        &=v(W(v)(\eta_{W(C)}(a)\star^{n}_{j}\eta_{W(C)}(b)))\\
        &=v(\nu(C)(\eta_{W(C)}(a)\star^{n}_{j}\eta_{W(C)}(b)))\\
        &=v(\nu(C)(\eta_{W(C)}(a))\star^{n}_{j}\nu(C)(\eta_{W(C)}(b)))\\
        &=v(a\star^{n}_{j}b)
\end{align*}
Other equalities are shown similarly. In \cite{penon} J.Penon called \textit{magmatic} such properties of algebras. In particular these shall be useful for concrete 
computations in any $\mathbb{W}$-algebras.

\subsection{Computations for low dimensions}
\label{computations}

\begin{definition}
Consider a reflexive cubical set $C\in\Cr\E$.
It has dimension $p\in\mathbb{N}$ for reflexions if all its $q$-cells $x\in C(q)$ for which $q>p$ are
of the form $x=1^{q-1}_{q,j}(y)$ and if there is at least one $p$-cell which is not of this form. 
It has dimension $p\in\mathbb{N}$ for connections if all its $q$-cells $x\in C(q)$ for which $q>p$ are
of the form $x=1^{q-1,\gamma}_{q,j}(y)$ and if there is at least one $p$-cell which is not of this form. 
It has dimension $p\in\mathbb{N}$, if it has dimension $p\in\mathbb{N}$ for reflexions and 
connections.
\end{definition}

\begin{definition}
Consider a $\mathbb{W}$-algebra $(C,v)$. It has dimension $p\in\mathbb{N}$ for reflexion if its
underlying reflexive set produced by its underlying $\infty$-magma structure (see \ref{magma}) has 
dimension $p\in\mathbb{N}$ for reflexion. It has dimension $p\in\mathbb{N}$ for connection if its 
underlying reflexive set produced by its underlying $\infty$-magma structure has dimension 
$p\in\mathbb{N}$ for connections. It has dimension $p\in\mathbb{N}$, if it has dimension $p\in\mathbb{N}$ 
for reflexions and connections.
\end{definition}

\begin{exercice} 
$2$-dimensional $\mathbb{W}$-algebras are cubical bicategories.
\end{exercice}

\section{Cubical weak $\infty$-groupoids}
\label{isostretchings}
The author doesn't know any work on cubical weak $\infty$-categories with kind of inverses involved. However such
work exists for the strict case in \cite{lucas}. In low dimensions, simplicial
methods have been used in \cite{bro2,bro1,bro-hig2,bro-hig1,bro-spen,dakin,salleh} to study it. Some applications of it to
homology have been considered in \cite{ashley,bro-hig3,bro-hig2,bro-hig1}, and other applications in algebraic topology
have also been carried out in \cite{bro-lod,gilbert,porter}. As we said in the beginning of this article our first scope is to provide these higher cubical notions with the perspective to carry on applications of these cubical higher structures to homological algebra, algebraic topology 
and computer sciences.

Models of cubical weak $\infty$-groupoids that we are going to define are algebras 
for a monad $\mathbb{W}^0$ on the category $\Cu\E$ of cubical sets. This monad is built with adapted
stretchings : the \textit{cubical $\infty$-groupoidal stretchings} \ref{isostretchings}, which themselves are built with
cubical strict $\infty$-groupoids with connections \ref{cub-m-strict} which has been characterized by Lucas in \cite{lucas}, and with 
cubical reflexive $(\infty,0)$-magmas \ref{isostretchings}. As in \ref{strict-infty} these cubical $\infty$-groupoidal stretchings
are tools which fill with cubical coherences cells their underlying cubical reflexive $(\infty,0)$-magmas, controlled by their underlying cubical strict $\infty$-groupoids.  

\subsection{Cubical $(\infty,0)$-sets.}
Cubical $(\infty,0)$-sets underly a new sketch
(see diagrams below) 
which we use in \ref{isostretchings} to define algebraic models of cubical weak $\infty$-groupoids.

Here we define cubical version of
the formalism developed in \cite{Cam} for globular $(\infty,0)$-sets. This formalism of this cubical world is very similar to its globular world analogue.

\label{reversible-omega-graphs}

Consider a cubical set $\mathcal{C}=(C_n,s^{n}_{n-1,j},t^{n}_{n-1,j})_{1\leq j\leq n}$. If $n\geq 1$ and 
$1\leq j\leq n$, then a $(n,j)$-reversor on it 
is given by a map $\xymatrix{C_{n}\ar[rr]^{j^{n}_{j}}&&C_{n}}$ such that the following two diagrams commute :
\[\xymatrix{C_{n}\ar[rr]^{j^{n}_{j}}\ar[rd]_{s^{n}_{n-1,j}}&&C_n\ar[ld]^{t^{n}_{n-1,j}}\\
&C_{n-1}}\qquad\xymatrix{C_{n}\ar[rr]^{j^{n}_{j}}\ar[rd]_{t^{n}_{n-1,j}}&&C_m\ar[ld]^{s^{n}_{n-1,j}}\\
&C_{n-1}}\]

If for each $n>0$ and for each $1\leq j\leq n$, there are such $(n,j)$-reversor $j^{n}_{j}$ on $\mathcal{C}$, then 
we say that $\mathcal{C}$ is a cubical $(\infty,0)$-set. The family of maps $(j^{n}_{j})_{n>0,1\leq j\leq n}$ for all
 $(n\in\mathbb{N}^*)$   is
called an $(\infty,0)$-structure and in that case we shall say that $\mathcal{C}$ is equipped with the $(\infty,0)$-structure 
$(j^{n}_{j})_{n>0,1\leq j\leq n}$. When we speak about such $(\infty,0)$-structure $(j^{n}_{j})_{n>0,1\leq j\leq n}$ on $\mathcal{C}$, 
it means that it is for all integers $n\in\mathbb{N}^*$ such that $C_n$ is non-empty. Seen as cubical $(\infty,0)$-set we denote it by
$\mathcal{C}=((C_n,s^{n}_{n-1,j},t^{n}_{n-1,j})_{1\leq j\leq n},(j^{n}_{j})_{n>0,1\leq j\leq n})$. If 
$\mathcal{C}'=((C_n',s'^{n}_{n-1,j},t'^{n}_{n-1,j})_{1\leq j\leq n},(j'^{n}_{j})_{n>0,1\leq j\leq n})$ is another
$(\infty,0)$-set, then a morphism of $(\infty,0)$-sets 
\[\xymatrix{\mathcal{C}\ar[rr]^{f}&&\mathcal{C}'}\]
is given by a morphism of cubical sets such that for each $n>0$ and for each $1\leq j\leq n$ we have the following commutative diagrams
\[\xymatrix{C_{n}\ar[d]_{f_{n}}\ar[rrr]^{j^{n}_{j}}&&&C_{n}\ar[d]^{f_{n}}\\
C'_{n}\ar[rrr]_{j'^{n}_{j}}&&&C'_{n}}\]
The category of cubical $(\infty,0)$-sets is denoted $(\infty,0)$-$\mathbb{C}\E$.

\begin{remark}
A cubical set $\mathcal{C}=(C_n,s^{n}_{n-1,j},t^{n}_{n-1,j})_{1\leq j\leq n}$ can be equipped with other operations
which give the inverse for degeneracies which come from connections. These operations are called \textit{connectors}
in \cite{cam-anatole}. For each integer $n\in\mathbb{N}^*$ such that $C_n$ is non-empty, we define connectors on $\mathcal{C}$
as maps :

$$\begin{tikzcd}
C_{n}\arrow[rr,"j^{n,\gamma}_j"]&&C_{n}
\end{tikzcd}$$

such that we have the following commutative diagrams :

$$\begin{tikzcd} 
C_{n}\arrow[rd,"s^{n}_{n-1,j}"{left}]\arrow[rr,"j^{n,\gamma}_j"]&&C_{n}\arrow[ld,"s^{n}_{n-1,j+1}"]\\
&C_{n-1}
\end{tikzcd}\qquad
\begin{tikzcd}C_{n}\arrow[rd,"t^{n}_{n-1,j}"{left}]\arrow[rr,"j^{n,\gamma}_j"]&&C_{n}\arrow[ld,"t^{n}_{n-1,j+1}"]\\
&C_{n-1}
\end{tikzcd}$$

where $j\in\{1,\cdots,n-1\}$. This provide on $\mathcal{C}$ with another kind of structure of $(\infty,0)$-set, which could
be used to define cubical inverses related to connections. But we prefer to avoid such structure, thought very 
interesting, because our scope is first to define cubical weak $\infty$-groupoids with connections, which use (through adapted 
stretchings, see \ref{isostretchings}) the characterization in \cite{lucas} 
of cubical strict $\infty$-groupoids with connections, just by using reversors as above. Such cubical set $\mathcal{C}$
can be also equipped with reversors and connectors, which is still another kind of structure of $(\infty,0)$-set which
must also deserve more investigations in the Future. It is also important to notice that the different kind of inverses
for strict cubical $\infty$-categories has been invested by Lucas in \cite{lucas} but without our level of structures.
\end{remark}

\begin{remark}
This structural approach of inverses is much more powerful that the simplicial methods because with it we are able to build
any kind of \textit{reversible higher structure}. For example in our framework it is a simple exercise to build some exotic 
one which could be difficult to be build with simplicial method.
\end{remark}

\subsection{Cubical strict $\infty$-groupoids}
\label{cub-m-strict}
In this section we use the characterization of Lucas \cite{lucas} for cubical strict $\infty$-groupoids with connections. 
Thus cubical strict $\infty$-groupoids are just cubical strict $\infty$-categories with connections such that all $n$-cells for $n>0$ are 
$\circ^{n}_{j}$-isomorphisms for $1\leq j\leq n$. The studies of cubical strict structures using inverses has
been done in \cite{bro2,bro1,bro-hig2,bro-hig1,bro-spen,dakin,lucas,salleh}, especially for the scope to generalize
many known results involving low dimensional groupoids and algebraic topology. For example generalization 
of cubical strict fundamental groupoid to higher dimensions has been undertaken 
in \cite{bro-hig2,bro-hig3} in order to obtain higher version of Van Kampen type Theorem. 

In this article we use cubical strict $\infty$-groupoids as an underlying part of the structure of the \textit{cubical $(\infty,0)$-groupoidal
stretchings} (see \ref{isostretchings}) which are the adapted stretchings to weakened cubical strict $\infty$-groupoids. Thus
they are an important step for our approach of cubical weak $\infty$-groupoids.

Consider a cubical strict $\infty$-category $\mathcal{C}$ as defined in \ref{strict-infty}. We say that it is a cubical strict 
$\infty$-groupoid if its underlying cubical set is equipped with an $(\infty,0)$-structure $(j^{n}_{j})_{n>0,1\leq j\leq n}$ such that we require 
the following identities : $\forall j,n$ such that $1\leq j\leq n$ and $0<n$,
  \[\forall\alpha\in C_n, \hspace{.1cm}\alpha\circ^{n}_{j}j^{n}_{j}(\alpha)=1^{n-1}_{n,j}(t^{n}_{n-1,j}(\alpha))
  \hspace{.1cm}\text{and}\hspace{.1cm}j^{n}_{j}(\alpha)\circ^{n}_{j}\alpha=1^{n-1}_{n,j}(s^{n}_{n-1,j}(\alpha))\]
\begin{proposition}
A cubical strict $\infty$-groupoids $\mathcal{C}$ as above has a unique underlying
$(\infty,0)$-set.
\end{proposition}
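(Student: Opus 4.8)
The plan is to reduce the statement to the uniqueness of inverses in a category. Since the underlying cubical set of $\mathcal{C}$ is given, the only datum that could vary is the family of reversors $(j^{k}_{j})_{k>m,\,1\leq j\leq k}$; I will show this family is completely forced by the strict $(\infty,m)$-category axioms, so that the underlying $(\infty,m)$-set is unique.

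First I would fix $k>m$ and $1\leq j\leq k$ and isolate, inside the cubical strict $\infty$-category structure, the ordinary category $\mathcal{C}_{k,j}$ whose set of objects is $C_{k-1}$ and whose set of arrows is $C_k$, where $\alpha\in C_k$ is regarded as an arrow with source $s^{k}_{k-1,j}(\alpha)$ and target $t^{k}_{k-1,j}(\alpha)$, composition is $\circ^{k}_{j}$, and the identity on $y\in C_{k-1}$ is $1^{k-1}_{k,j}(y)$. That this really is a category is exactly the direction-$j$ fragment of the strict structure: associativity of $\circ^{k}_{j}$ is assumed in \ref{strict-cat}, and the unit laws follow from the reflexivity axioms together with the interaction of $1^{k-1}_{k,j}$ with $\circ^{k}_{j}$. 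This is the only point at which the full strict-category axioms are used.

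Next I would observe that the defining equations of a cubical strict $(\infty,m)$-category say precisely that $j^{k}_{j}(\alpha)$ is a two-sided inverse of $\alpha$ in $\mathcal{C}_{k,j}$: the identity $\alpha\circ^{k}_{j}j^{k}_{j}(\alpha)=1^{k-1}_{k,j}(t^{k}_{k-1,j}(\alpha))$ exhibits it as an inverse on one side and $j^{k}_{j}(\alpha)\circ^{k}_{j}\alpha=1^{k-1}_{k,j}(s^{k}_{k-1,j}(\alpha))$ on the other, while the reversor diagrams $s^{k}_{k-1,j}\circ j^{k}_{j}=t^{k}_{k-1,j}$ and $t^{k}_{k-1,j}\circ j^{k}_{j}=s^{k}_{k-1,j}$ are exactly what is needed for these two composites to be defined. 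Now suppose $(j^{k}_{j})$ and $(\bar{\jmath}^{k}_{j})$ are two $(\infty,m)$-structures both turning $\mathcal{C}$ into a strict $(\infty,m)$-category, and fix $\alpha\in C_k$; writing $\beta:=j^{k}_{j}(\alpha)$ and $\gamma:=\bar{\jmath}^{k}_{j}(\alpha)$, both are two-sided inverses of the same arrow $\alpha$ in $\mathcal{C}_{k,j}$, so the standard three-term computation
\[
\beta=\beta\circ^{k}_{j}(\alpha\circ^{k}_{j}\gamma)=(\beta\circ^{k}_{j}\alpha)\circ^{k}_{j}\gamma=\gamma,
\]
whose outer equalities are the unit laws and whose middle equality is associativity of $\mathcal{C}_{k,j}$, yields $\beta=\gamma$. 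As $\alpha$, $k$ and $j$ were arbitrary, the two reversor families coincide.

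Consequently the reversors are determined by the strict structure, so the underlying $(\infty,m)$-set $((C_n,s^{n}_{n-1,j},t^{n}_{n-1,j}),(j^{k}_{j}))$ is unique, which is the assertion. I expect no conceptual difficulty: the entire content is uniqueness of inverses. The only care required is bookkeeping --- checking, from the reversor face relations and the source/target behaviour of $\circ^{k}_{j}$ in direction $j$, that each composite in the telescoping chain is defined and that the degeneracies $1^{k-1}_{k,j}(s^{k}_{k-1,j}(\alpha))$ and $1^{k-1}_{k,j}(t^{k}_{k-1,j}(\alpha))$ are the correct left/right units. Thus the main (and only mildly technical) step is establishing that $(C_k,\circ^{k}_{j},1^{k-1}_{k,j})$ is genuinely a category; once that is in hand the result is immediate.
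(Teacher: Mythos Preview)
Your argument is correct and is exactly the expected one: for each direction $j$ at level $k$, the data $(C_{k-1},C_k,s^{k}_{k-1,j},t^{k}_{k-1,j},\circ^{k}_{j},1^{k-1}_{k,j})$ form an ordinary category, the strict $(\infty,m)$-axioms make $j^{k}_{j}(\alpha)$ a two-sided inverse of $\alpha$ there, and uniqueness of inverses in a category finishes the job. The paper itself states this proposition without proof, so there is nothing to compare beyond noting that your approach is the standard one the author presumably had in mind. The only caveat is that the paper's list of axioms in \ref{strict-cat} does not explicitly record the unit laws $a\circ^{k}_{j}1^{k-1}_{k,j}(s^{k}_{k-1,j}(a))=a$ and $1^{k-1}_{k,j}(t^{k}_{k-1,j}(a))\circ^{k}_{j}a=a$; these are of course part of any standard definition of cubical strict $\infty$-category (cf.\ \cite{brown,steiner}) and you are right to invoke them, but it is worth flagging that they are tacit here.
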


A cubical strict $\infty$-functor preserve $(k,j)$-reversors. Thus morphisms between cubical strict $\infty$-groupoids are just
cubical strict $\infty$-functors. The category of cubical strict $\infty$-groupoids is denoted 
$\Igp$. 
As in \ref{monad-strict} it is not difficult to show the following proposition
\begin{proposition}
The evident forgetful functor 
\[\xymatrix{\Igp\ar[rr]^{U}&&\mathbb{C}\E}\]
is right adjoint and monadic.
\end{proposition}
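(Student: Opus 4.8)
The plan is to mimic the sketch-theoretic argument of \ref{monad-strict}. First I would enlarge the projective sketch $\mathcal{E}_\text{C}$ of cubical strict $\infty$-categories into a projective sketch $\mathcal{E}^m_\text{C}$ whose models are exactly the cubical strict $(\infty,m)$-categories. Concretely, for each $k>m$ and each $1\leq j\leq k$ I add to $\mathcal{E}_\text{C}$ an arrow $j^{k}_{j}\colon M_k\to M_k$ (the $(k,j)$-reversor), together with the two commutative triangles encoding $s^{k}_{k-1,j}\circ j^{k}_{j}=t^{k}_{k-1,j}$ and $t^{k}_{k-1,j}\circ j^{k}_{j}=s^{k}_{k-1,j}$. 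The two inverse laws $\alpha\circ^{k}_{j}j^{k}_{j}(\alpha)=1^{k-1}_{k,j}(t^{k}_{k-1,j}(\alpha))$ and $j^{k}_{j}(\alpha)\circ^{k}_{j}\alpha=1^{k-1}_{k,j}(s^{k}_{k-1,j}(\alpha))$ are then recorded as commutative diagrams out of $M_k$: the source/target triangles force the pairs $(\alpha,j^{k}_{j}(\alpha))$ and $(j^{k}_{j}(\alpha),\alpha)$ to factor through the fibre product $M_k\underset{M_{k-1,j}}\times M_k$ already present in $\mathcal{E}_\text{C}$, so that both composites are globally defined maps $M_k\to M_k$ and the inverse laws are genuine equalities of arrows. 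No new limit cone is needed beyond those of $\mathcal{E}_\text{C}$; only arrows and commutativities are added, so $\mathcal{E}^m_\text{C}$ is again a projective sketch, yielding inclusions $\mathcal{E}_\text{S}\hookrightarrow\mathcal{E}_\text{C}\hookrightarrow\mathcal{E}^m_\text{C}$.

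The inclusion $\mathcal{E}_\text{S}\hookrightarrow\mathcal{E}^m_\text{C}$ induces the forgetful functor $U\colon\Cmc\to\Cu\E$. Exactly as in \ref{monad-strict}, the sheafification theorem of Foltz \cite{foltz} provides a left adjoint to the induced functor $Mod(\mathcal{E}^m_\text{C})\to Mod(\mathcal{E}_\text{S})$, and the commutative square
\[\xymatrix{
Mod(\mathcal{E}^m_\text{C})\ar[rr]\ar[d]_{iso}&&Mod(\mathcal{E}_\text{S})\ar[d]^{iso}\\
\Cmc\ar[rr]^{U}&&\Cu\E}\]
coming from the isomorphisms $Mod(\mathcal{E}^m_\text{C})\simeq\Cmc$ and $Mod(\mathcal{E}_\text{S})\simeq\Cu\E$ shows that $U$ is right adjoint.

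For monadicity I would invoke Beck's theorem. The functor $U$ is conservative: if a morphism of cubical strict $(\infty,m)$-categories is invertible on underlying cubical sets, its set-theoretic inverse automatically commutes with every composition, reflexion, connection and reversor, since all of these are determined by the cubical-set data, as recorded in the uniqueness proposition above. It then remains to check that $U$ creates coequalizers of $U$-split pairs. Given such a pair, one transports the partial operations $\circ^{k}_{j}$, the reflexions, the connections and the reversors $j^{k}_{j}$ along the split coequalizer computed in $\Cu\E$; because a $U$-split coequalizer is absolute it is preserved by the fibre-product functors, so the composability pullbacks $M_k\underset{M_{k-1,j}}\times M_k$ are computed compatibly, each operation descends uniquely to the coequalizer, and every equational axiom survives since it holds level-wise. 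Hence $U$ creates these coequalizers and, being right adjoint and conservative, is monadic.

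The main obstacle is the creation of coequalizers rather than the sketch bookkeeping: since the compositions $\circ^{k}_{j}$ are only partially defined, one must verify with care that the $U$-split coequalizer in $\Cu\E$ restricts correctly to the fibre products $M_k\underset{M_{k-1,j}}\times M_k$, so that the transported operations stay well defined and the inverse laws persist on the quotient. This is precisely the delicate point already underlying the construction of the monad $\mathbb{S}$ in \ref{monad-strict}, and it is what the phrase ``not difficult to show'' is quietly absorbing.
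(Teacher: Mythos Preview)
Your argument for the right adjoint is exactly the paper's: enlarge $\mathcal{E}_\text{C}$ by the reversors and their equational axioms, observe the resulting $\mathcal{E}^m_\text{C}$ is still a projective sketch containing $\mathcal{E}_\text{S}$, and invoke Foltz. The paper says nothing more than ``as in \ref{monad-strict}'', so on this half you and the paper coincide.

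For monadicity the paper simply asserts it, whereas you supply a Beck argument. That argument is sound, but one remark on your conservativity step: you do not actually need the uniqueness proposition for the $(\infty,m)$-structure. Conservativity holds for the blunt reason that the set-theoretic inverse of any bijective morphism of algebraic structures is again a morphism of those structures; the operations $\circ^{k}_{j}$, $1^{n}_{n+1,j}$, $1^{n,\gamma}_{n+1,j}$, $j^{k}_{j}$ are all genuine (partial) operations determined on the nose, so no uniqueness-of-inverses input is required. Your creation-of-$U$-split-coequalizers paragraph is the real content, and the point you flag---that absoluteness of the split coequalizer forces it to be preserved by the fibre-product functors $M_k\underset{M_{k-1,j}}\times M_k$---is precisely what makes the partial compositions descend. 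This is correct and is indeed the step the paper's ``not difficult'' is hiding.
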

The monad of cubical strict $\infty$-groupoids on cubical sets is denoted $\mathbb{S}^0=(S^0,\lambda^0,\mu^0)$. Here 
$\lambda^0$ is 
the unit map of $\mathbb{S}^0$ : $\xymatrix{1_{\mathbb{C}\E}\ar[rr]^{\lambda^0}&&S^0}$ and $\mu^0$ is the multiplication of 
$\mathbb{S}^0$ : $\xymatrix{(S^0)^2\ar[rr]^{\mu^{0}}&&S^0}$

\subsection{The category of cubical weak 
$\infty$-groupoids}
\label{isostretchings} 

A cubical reflexive $(\infty,0)$-magma is an object of $\I\mathbb{C}\mathbb{M}\text{ag}_\text{r}$ such that its underlying
cubical set is equipped with an $(\infty,0)$-structure. Morphisms between cubical reflexive $(\infty,0)$-magmas are
those of $\I\mathbb{C}\mathbb{M}\text{ag}_\text{r}$ which are also morphisms of $(\infty,0)\text{-}\mathbb{C}\mathbb{S}ets$, i.e
they preserve the underlying $(\infty,0)$-structures. The category of cubical reflexive $(\infty,0)$-magmas is denoted 
$(\infty,0)\text{-}\mathbb{C}\mathbb{M}\text{ag}_\text{r}$.

The category $\Etg$ of cubical $\infty$-groupoidal stretchings has as objects 
 quintuples 
  \[ \mathbb{E}=(M,C,\pi,([-;-]^{n}_{n+1,j})_{n\in\mathbb{N};j\in\{1,...,n\}},([-;-]^{n,\gamma}_{n+1,j})_{n\in\mathbb{N};j\in\{1,...,n\};\gamma\in\{-,+\}})\] 
  where $M$ is a cubical reflexive $(\infty,0)$-magma, $C$ is a cubical strict $\infty$-groupoid, $\pi$ is
a morphism in $(\infty,0)\text{-}\mathbb{C}\mathbb{M}\text{ag}_\text{r}$
\[\xymatrix{M\ar[rr]^{\pi}&&C}\] 
and $([-;-]^{n}_{n+1,j})_{n\in\mathbb{N};j\in\{1,...,n\}}, ([-;-]^{n,\gamma}_{n+1,j})_{n\in\mathbb{N};j\in\{1,...,n\};\gamma\in\{-,+\}}$ are the cubical bracketing structures which have
already been defined in \ref{stretchings}. A morphism of cubical $\infty$-groupoidal stretchings
\[\xymatrix{\mathbb{E}\ar[rr]^{(m,c)}&&\mathbb{E}'}\]
is given by the following commutative square in $(\infty,0)\text{-}\mathbb{C}\mathbb{M}\text{ag}_\text{r}$,
\[\xymatrix{M\ar[d]_{\pi}\ar[rr]^{m}&&M'\ar[d]^{\pi'}\\
  C\ar[rr]_{c}&&C' }\]
such that for all $n\in \mathbb{N}$, and for all
$(\alpha,\beta)\in\widetilde{M}_{n}$,
\[m_{n+1}([\alpha,\beta]^{n}_{n+1,j})=[m_{n}(\alpha),m_{n}(\beta)]^{n}_{n+1,j} 
\hspace{.2cm}(j\in\{1,...,n+1\})\]

and

\[m_{n+1}([\alpha,\beta]^{n,\gamma}_{n+1,j})=[m_{n}(\alpha),
m_{n}(\beta)]^{n,\gamma}_{n+1,j}\hspace{.2cm}(j\in\{1,...,n\}, \gamma\in\{-,+\})\]
The category of cubical $\infty$-groupoidal stretchings is denoted $\Etg$.

Now consider the forgetful functor:
\[\xymatrix{\Etg\ar[r]^(.6){U_{}}&\Cu\E}\]
given by 

$$\xymatrix{(M,C,\pi,([-;-]^{n}_{n+1,j})_{n\in\mathbb{N};j\in\{1,...,n\}},([-;-]^{n,\gamma}_{n+1,j})_{n\in\mathbb{N};j\in\{1,...,n\};\gamma\in\{-,+\}})\ar@{|->}[r]^{}&M }$$

This
functor has a left adjoint which produces a monad
$\mathbb{W}^0=(W^0,\eta^0,\nu^0)$ on the category of cubical sets.

\begin{definition}
Cubical weak $\infty$-groupoids are algebras for 
the monad $\mathbb{W}^0=(W^0,\eta^0,\nu^0)$ defined above on the category $\mathbb{C}\E$ of cubical sets.
\end{definition}

Thus the category of our models of cubical weak $\infty$-groupoids is 
denoted $\mathbb{W}^0\text{-}\mathbb{A}\text{lg}$.

If it is evident to see that we have "an embedded" of $\Igp$ in $\Cc$, this is
also the case for the weak case, even it is a bit more subtle : it comes from the 
forgetful functor  
$$\begin{tikzcd}
\Etg\arrow[rrr,"U"]&&&\Et
\end{tikzcd}$$
which forgets the underlying $(\infty,0)$-structures. Also we have the following 
morphism of the category $\mathbb{A}\text{dj}$ of adjunctions :
$$\begin{tikzcd}
        \Etg\arrow[rrr,"V"]
             \arrow[ddd,xshift=1ex,"\dashv"{left},"U^0"]&&&\Et
             \arrow[ddd,xshift=1ex,"\dashv"{left},"U"]\\\\\\
  \CS\arrow[rrr,"id"]
  \arrow[uuu,xshift=-1ex,"F"] 
        &&&\CS
  \arrow[uuu,xshift=-1ex,"F"] 
 \end{tikzcd}$$
 because
$U\circ V=U^0$ (see \cite{kamel}), thus it produces a morphism $V^*$ in the category $\mathbb{M}\text{nd}$ of monads
 $$\begin{tikzcd}
 (\Cu\E,\mathbb{W})\arrow[rrr,"V^*"]&&&(\Cu\E,\mathbb{W}^0) 
 \end{tikzcd}$$
 and passing to algebras, gives the following functor $\mathbb{A}\text{lg}(V^*)$ which is the "embedded"
we were looking for

$$\begin{tikzcd}
\mathbb{W}^0\text{-}\mathbb{A}\text{lg}\arrow[rrr,"\mathbb{A}\text{lg}(V^*)"]&&&\mathbb{W}\text{-}\mathbb{A}\text{lg}
\end{tikzcd}$$

In \cite{brown} it is proved that the category of cubical strict $\infty$-categories with connections is equivalent to the category of 
globular strict $\infty$-categories. In \cite{lucas} it is proved that the category of cubical strict $(\infty,m)$-categories
with connections is equivalent to the category of globular strict $(\infty,m)$-categories (which were first defined in
\cite{Cam}) and in particular he proved that the category of cubical strict $\infty$-groupoids
with connections is equivalent to the category of globular strict $\infty$-groupoids. 

Because our models of cubical weak $\infty$-groupoids with connections are the direct analogue of globular
weak $\infty$-groupoids (models of it are built in \cite{Cam,batanin-main,Maltsin-Gr}), and also because of the conjecture of 
Batanin in \cite{bat-pen} which says that his globular weak $\infty$-categories are "equivalent" to Penon 
globular weak $\infty$-categories, and plus the recent result of Bourke \cite{bourke}, which resolve the conjecture
of Ara \cite{ara} which says that Batanin's globular weak $\infty$-categories are equivalent to 
Grothendieck-Maltsiniotis weak $\infty$-categories, all these facts together impose to put the following 
hypothesese :

\begin{conj}[1]
The category $\mathbb{W}^0\text{-}\mathbb{A}\text{lg}$ of cubical weak $\infty$-groupoids with connections, 
is equipped with a Quillen model structure which is Quillen
equivalent to the category of globular weak $\infty$-groupoids equipped with a Quillen model 
structure.
\end{conj}

\begin{conj}[2]
The category $\mathbb{W}^0\text{-}\mathbb{A}\text{lg}$ of cubical weak $\infty$-groupoids with connections, 
is equiped with a Quillen model structure, the one of the first conjecture, such that its localization is equivalent to the category of $CW$-spaces. 
\end{conj}

This second conjecture is inspired by the result in \cite{malts} which says that the category 
$\mathbb{C}_r$ defined in \ref{reflexive_cubical_sets} is a \textit{test category}, that is the 
category of presheaves on $\mathbb{C}_r$ is equipped with a Quillen model 
structure (the \textit{Cisinski one}, see \cite{malts}), such that its localization is equivalent
to the category of $CW$-spaces. 
 
These hypotheses together must solve the \textit{Grothendieck conjecture on homotopy types} :

\begin{conj}[Grothendieck]
The category of globular weak $\infty$-groupoids is equipped with a Quillen model structure such that 
its localization is equivalent to the category of $CW$-spaces. 
\end{conj}

\section{Steps toward the cubical weak $\infty$-category of cubical weak $\infty$-categories.}

In this last section we are going to show how cubical stretchings give
algebraic models of cubical weak $\infty$-functors and cubical weak $\infty$-natural 
transformations. We can go further as it was done for the globular paradigm in \cite{kamel},
but just with this level $2$ of cubical algebras we obtain some 
interesting canonical $2$-cubes in \ref{c1}, \ref{c2} and \ref{c3}. In the end of this section \ref{c4}
we draw the cocubical object in some category of monads that we hope
to describe with more precision in the future and which contains all cubical analogue of globular algebras of 
\cite{kamel}. This cocubical object should be 
an important step to obtain the expected cubical weak $\infty$-category of cubical weak 
$\infty$-categories and thus the expected cubical Grothendieck $\infty$-topos, cubical 
$\infty$-stacks, etc. More precision for such cubical higher structure is in \cite{camark-cub-2}.

\subsection{The category of cubical weak $\infty$-functors}
\label{cub-funct}

In Theoretical Physics basic data of an \textit{Extended Topological Quantum Field Theory} (\textit{ETQFT}) 
\cite{jacob-etqft,feshbach-voronov} is given by a weak monoidal $\infty$-functor between a higher monoidal category of 
cobordisms and kind of higher categorification of the monoidal category of Hilbert spaces. For example in \cite{jacob-etqft} 
the \textit{simplicial geometry} is used to define ETQFT, while in \cite{feshbach-voronov} they used instead the \textit{multiple geometry} 
of Charles Ehresmann \cite{Ehresmann}. In this section we are going to define algebraic model of cubical weak
$\infty$-functors with the hope that in the future it can be useful for accurate algebraic models of cubical ETQFT.
 
In \cite{penon-amiens} Jacques Penon had proposed algebraic models
of globular weak $\infty$-functors which were extended to all kind of globular weak higher transformations in \cite{kamel}. The methods used
in \cite{kamel,penon-amiens} has consisted to use different kind of \textit{stretchings} in order to weakened different kind of strict structure. For 
example in \cite{penon-amiens} he has built a category of stretchings named in \cite{kamel} the category of $(0,\infty)$-\textit{categorical stretchings}
\footnote{$(0,\infty)$-categorical stretchings must not be confused with $(\infty,0)$-categorical stretchings used in \ref{isostretchings} to 
weakened cubical strict $(\infty,0)$-categories and which were used in \cite{Cam} to weakened globular strict $(\infty,0)$-categories.} 
 which were adapted to weakened strict $\infty$-functors. And in \cite{kamel} the author used the category of $(n,\infty)$-\textit{categorical stretchings}
 to weakened all kind of globular strict $n$-transformations for all $n\geq 2$ (strict natural $\infty$-transformations correspond to $n=2$ and strict $\infty$-modifications
 correspond to $n=3$, etc.). As we are going to see, our models of cubical weak $\infty$-functors are build with similar technology : we are going
 to define \textit{cubical functorial stretchings} which contains all "informations"\footnote{here we put these brackets, because these structural informations must be thought up to some weak equivalences, following the idea of \textit{models} developed in any higher category theory} of the structure behind cubical weak $\infty$-functors. This
 structure produces a monad on the category $\Cu\E\times\Cu\E$ which algebras are our models of cubical weak $\infty$-functors. In 
 \ref{cub-trans} we shall investigate similar constructions for models of cubical weak $\infty$-natural transformations. 
 
 Cubical strict $\infty$-functors have been defined in \ref{strict-cat}. A morphism
 between two cubical strict $\infty$-functors
 \begin{tikzcd}
 C\arrow[rr,"F"]&&D
 \end{tikzcd}
 and 
 \begin{tikzcd}
 C'\arrow[rr,"F'"]&&D'
 \end{tikzcd} 
 is given by a commutative $2$-cube in $\Cu\mathbb{C}\text{AT}$
 
 $$\begin{tikzcd}
C \arrow[ddd,"F"{left}]\arrow[rrr,"c"] &&& C'\arrow[ddd,"F'"] \\\\\\
D \arrow[rrr,"d"{below}] &&& D'
\end{tikzcd}$$

The category of cubical strict $\infty$-functors is denoted $\I\Cu\mathbb{F}\text{unct}$.

\subsubsection{The category of cubical $(0,\infty)$-magmas}
A cubical $(0,\infty)$-magma is given by a morphism $\xymatrix{M_0\ar[rr]^{F_M}&&M_1}$ of $\Cu\E$ such that $M_0$ and $M_1$ are 
objects of $\I\Cu\Mag_\text{r}$. This object is denoted $(M_0,F_M,M_1)$. 

A morphism between $(0,\infty)$-magmas
\[\begin{tikzcd}
(M_0,F_M,M_1) \arrow[rrr,"m"]&&&(M'_0,F'_M,M'_1)
\end{tikzcd}\]

is given by two morphisms of $\I\Cu\Mag_\text{r}$ :
\begin{tikzcd}
M_0 \arrow[rr,"m_0"]&&M'_0
\end{tikzcd},
\begin{tikzcd}
M_1 \arrow[rr,"m_1"]&&M'_1
\end{tikzcd},
such that the following diagram commutes in $\Cu\E$

$$\begin{tikzcd}
M_0 \arrow[ddd,"m_0"{left}]\arrow[rrr,"F_M"] &&& M_1\arrow[ddd,"m_1"] \\\\\\
M'_0 \arrow[rrr,"F'_M"{below}] &&& M'_1
\end{tikzcd}$$

The category of cubical $(0,\infty)$-magmas is denoted by $(0,\infty)\text{-}\Cu\Mag_\text{r}$

\subsubsection{The category of cubical $(0,\infty)$-categorical stretchings}
A $(0,\infty)$-stretching is given by a triple
 \[\mathbb{E}=(\mathbb{E}_0,\mathbb{E}_1,F_M,F_C)\]
 such that $\mathbb{E}_0$, $\mathbb{E}_1$ are cubical categorical stretchings given by
 \[ \mathbb{E}_0=(M_0,C_0,\pi_0,(^{0}[-;-]^{n}_{n+1,j})_{n\in\mathbb{N};j\in\{1,...,n\}},(^{0}[-;-]^{n,\gamma}_{n+1,j})_{n\in\mathbb{N};j\in\{1,...,n\};\gamma\in\{-,+\}})\] 
 and 
 \[ \mathbb{E}_1=(M_1,C_1,\pi_1,(^{1}[-;-]^{n}_{n+1,j})_{n\in\mathbb{N};j\in\{1,...,n\}},(^{1}[-;-]^{n,\gamma}_{n+1,j})_{n\in\mathbb{N};j\in\{1,...,n\};\gamma\in\{-,+\}})\] 
 also : $(M_0,F_M,M_1)$ is an object of $(0,\infty)\text{-}\Cu\Mag_\text{r}$, 
 \begin{tikzcd}
C_0\arrow[rr,"F_C"]&&C_1
\end{tikzcd} 
is a strict cubical $\infty$-functor, such that the following square is commutative in $\Cu\E$ :

$$\begin{tikzcd}
M_0 \arrow[ddd,"\pi_0"{left}]\arrow[rrr,"F_M"] &&& M_1\arrow[ddd,"\pi_1"] \\\\\\
C_0 \arrow[rrr,"F_C"{below}] &&& C_1
\end{tikzcd}$$

A morphism of $(0,\infty)$-stretchings
$$\begin{tikzcd}
\mathbb{E}=(\mathbb{E}_0,\mathbb{E}_1,F_M,F_C)\arrow[rrr,""]&&&\mathbb{E}'=(\mathbb{E}'_0,\mathbb{E}'_1,F'_M,F'_C)
\end{tikzcd}$$
is given by the following commutative diagram in $\Cu\E$ : 

$$\begin{tikzcd}[row sep=scriptsize, column sep=scriptsize]
&& M_0' \arrow[rrr,"F'_M"] \arrow[ddd,"\pi'_0"] &&& M_1' \arrow[ddd,"\pi'_1"] \\ M_0\arrow[rru,"m_0"] \arrow[rrr, "F_M",crossing over] \arrow[ddd,"\pi_0"] & && M_1 \arrow[rru,"m_1"]\\\\
&& C_0' \arrow[rrr,"F'_C"] && & C_1' \\
C_0 \arrow[rru,"c_0"] \arrow[rrr,"F_C"] & && C_1 \arrow[rru,"c_1"] \arrow[from=uuu, "\pi_1", pos=0.3, crossing over]\\
\end{tikzcd}$$

such that $(m_0,m_1)$ is a morphism of $(0,\infty)\text{-}\Cu\Mag_\text{r}$, $(m_0,c_0)$ and $(m_1,c_1)$ are 
morphisms of $\I\Cu\mathbb{E}\text{tC}$. The category of $(0,\infty)$-stretchings is denoted $(0,\infty)\text{-}\Cu\mathbb{E}\text{tC}$.

Now consider the forgetful functor:
$$\xymatrix{(0,\infty)\text{-}\Cu\mathbb{E}\text{tC}\ar[rr]^{U_{}}&&\Cu\E\times\Cu\E}$$
given by 

$$\xymatrix{\mathbb{E}=(\mathbb{E}_0,\mathbb{E}_1,F_M,F_C)\ar@{|->}[r]^(.6){U}&(M_0,M_1) }$$

It is not difficult to show that the category $(0,\infty)\text{-}\Cu\mathbb{E}\text{tC}$ is projectively sketchable
and that its sketch contains the projective sketch of $\Cu\E\times\Cu\E$. 
Thus this
functor has a left adjoint which produces a monad
$\mathbb{T}^0=(T^0,\lambda^0,\mu^0)$ on the category $\Cu\E\times\Cu\E$.
\begin{definition}
  Cubical weak $\infty$-functors are algebras for the monad $\mathbb{T}^0$ above.
 \end{definition}
 
 Thus a cubical weak $\infty$-functor is given by a quadruple $(C_0,C_1,v_0,v_1)$ such that if we note 
 $T^0(C_0,C_1)=(T^0_0(C_0,C_1),T^0_1(C_0,C_1)$ then we get its underling morphisms of $\Cu\E$
 $$\begin{tikzcd}
 T^0_0(C_0,C_1)\arrow[rr,"v_0"]&&C_0 
 \end{tikzcd}\qquad
 \begin{tikzcd}
 T^0_1(C_0,C_1)\arrow[rr,"v_1"]&&C_1 
 \end{tikzcd}$$  
 
 and these morphisms of $\Cu\E$ put on $(C_0,C_1)$ a structure of cubical weak $\infty$-functor 
  \begin{tikzcd}
 C_0\arrow[rrr,"F"]&&&C_1 
 \end{tikzcd}  
 
 defined by $$F=v_1\circ F_M\circ\lambda^{0}_{0}(C_0,C_1)$$
 with 
 
 \begin{tikzcd}
 C_0\arrow[ddd,"\lambda^{0}_{0}{(C_0,C_1)}"{left}]\\\\\\
 T^0_0(C_0,C_1)\arrow[ddd,"v_0"{left}]\arrow[rrr,"F",dotted]&&&
 T^0_1(C_0,C_1)\arrow[ddd,"v_1"]\\\\\\
 C_0\arrow[rrr,"v_1{\circ F_M\circ}\lambda^0_0"{below},dotted]&&&C_1
 \end{tikzcd}

\subsection{The category of cubical weak $\infty$-natural transformations}
\label{cub-trans}
Now we describe a monad on the category 
$$(\Cu\E)^4=\Cu\E\times\Cu\E\times\Cu\E\times\Cu\E$$ 
which algebras are our models of cubical weak $\infty$-natural transformations. In \cite{kamel} we defined globular $\infty$-natural transformations by using the structure given by an adapted category of globular stretchings, namely the category of $(1,\infty)$-stretchings. 
Here we use similar technology by defining first the category of \textit{cubical} $(1,\infty)$-\textit{stretchings} which contains the underlying structure needed to weakened cubical strict natural $\infty$-transformations. This last monad $\mathbb{T}^1$ (see below) gives some $2$-cubes in \ref{c3} and a first flavor of an expected cocubical object of monads described in the end of this article. 

\subsubsection{The category of cubical strict $\infty$-natural transformations}
Cubical strict natural transformations were introduced in \cite{grandis-pare}. Here we give the evident strict and higher version of it.
A cubical strict $\infty$-natural transformation is given by a $2$-cube in $\CC$

$$\begin{tikzcd}
C_{0,0} \arrow[rrrddd,phantom,"\tau\Downarrow"] \arrow[ddd,"H"{left}]\arrow[rrr,"F"] &&& C_{1,0}\arrow[ddd,"G"] \\\\\\
C_{0,1} \arrow[rrr,"K"{below}] &&&C_{1,1}
\end{tikzcd}$$

which $0$-cells corresponds to four cubical strict $\infty$-categories $C_{0,0}$, $C_{0,1}$, $C_{1,0}$, $C_{1,1}$, which $1$-cells corresponds to four 
cubical strict $\infty$-functors $F$, $G$, $H$, $K$, and which the only $2$-cell $\tau$ corresponds, for all $0$-cells $a$ in $C_{0,0}$, to a $1$-cell
\begin{tikzcd}
G(F(a))\arrow[rr,"\tau(a)"]&&K(H(a))
\end{tikzcd}
such that for all $1$-cells 
\begin{tikzcd}
a\arrow[rr,"f"]&&b
\end{tikzcd}
of $C_{0,0}$ we have the following commutative diagram :

$$\begin{tikzcd}
G(F(a)) \arrow[dd,"G(F(f))"{left}]\arrow[rr,"\tau(a)"] &&K(H(a))\arrow[dd,"K(H(f))"] \\\\
G(F(b)) \arrow[rr,"\tau(b)"{below}] &&K(H(b))
\end{tikzcd}$$

A morphism between two cubical strict $\infty$-natural transformations $\tau$ and 
$\tau'$ is given by a $3$-cube in $\CC$

$$\begin{tikzcd}[row sep=scriptsize, column sep=scriptsize]
&& C'_{0,0} \arrow[rrrddd,phantom,"\tau'\Downarrow",pos=0.6] \arrow[rrr,"F'"] \arrow[ddd,"H'"] &&& C'_{1,0} \arrow[ddd,"G'"] \\ C_{0,0}\arrow[rrrddd,phantom,"\tau\Downarrow",pos=0.4] \arrow[rru,"c_{0,0}"] \arrow[rrr, "F",crossing over] \arrow[ddd,"H"{left}] & && C_{1,0} \arrow[rru,"c_{1,0}"]\\\\
&& C'_{0,1}\arrow[rrr,"K'"{below}] && & C'_{1,1} \\
C_{0,1}\arrow[rru,"c_{0,1}"] \arrow[rrr,"K"{below}] & && C_{1,1} \arrow[rru,"c_{1,1}"{below}] \arrow[from=uuu, "G", pos=0.4, crossing over]\\
\end{tikzcd}$$

such that $c_{1,0} F=F' c_{0,0}$, $c_{1,1} G=G' c_{1,0}$, 
$c_{0,1} H=H' c_{0,0}$ and $c_{1,1} K=K' c_{0,1}$. The category of 
cubical strict $\infty$-natural transformations is denoted $(1,\infty)\text{-}\Cu\Trans$, and
we obtain an internal $2$-cube in $\mathbb{C}\text{AT}$

\label{c1}
$$\begin{tikzcd}
   (1,\infty)\text{-}\Cu\Trans \arrow[rrrr, yshift=4ex,"\sigma^2_{1,1}"]   
   \arrow[rrrr, yshift=1ex,"\sigma^2_{1,2}"]
   \arrow[rrrr, yshift=-1ex,"\tau^2_{1,1}"{below}]
    \arrow[rrrr, yshift=-4ex,"\tau^2_{1,2}"{below}]
      &&&& \I\Cu\mathbb{F}\text{unct} 
      \arrow[rrrr, yshift=1ex,"\sigma^1_0"]
      \arrow[rrrr, yshift=-1ex,"\tau^1_0"{below}]   
        &&&&\Cu\mathbb{C}\text{AT}
\end{tikzcd}$$

\begin{proposition}
\label{deux-cubes}
The internal $2$-cube of $\mathbb{C}\text{AT}$ just above can be structured 
in a strict cubical $2$-category
\end{proposition}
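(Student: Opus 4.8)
The plan is to exhibit explicitly all the missing structural functors — the reflexivities, the connections and the two compositions — that turn the displayed diagram into a strict cubical $2$-category internal to $\CAT$, and then to check that they satisfy the axioms of \ref{strict-cat} truncated at dimension $2$. The three categories play the roles of the $0$-, $1$- and $2$-cells: $\CC$ is the object of $0$-cells, $\I\Cu\mathbb{F}\text{unct}$ the object of $1$-cells and $(1,\infty)\text{-}\Cu\Trans$ the object of $2$-cells, while the given sources and targets $\sigma^{1}_{0},\tau^{1}_{0}$ and $\sigma^{2}_{1,j},\tau^{2}_{1,j}$ $(j\in\{1,2\})$ read off the four edges $F,G,H,K$ of a natural transformation square. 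Since $\CAT$ possesses all the pullbacks needed to form composable pairs, the whole structure will live internally in $\CAT$ as soon as each structural map is shown to be a functor.

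First I would define the reflexive and connection functors. The functor $1^{0}_{1}\colon\CC\to\I\Cu\mathbb{F}\text{unct}$ sends a cubical strict $\infty$-category $C$ to $\mathrm{id}_{C}$, and the two functors $1^{1}_{2,1},1^{1}_{2,2}\colon\I\Cu\mathbb{F}\text{unct}\to(1,\infty)\text{-}\Cu\Trans$ send a functor $F$ to the two degenerate squares carrying $F$ on the two edges in direction $2$ (resp. direction $1$), identities on the remaining edges, and the identity $1$-cell on $F(a)$ as the mediating $2$-cell $\tau$. The connection functors $1^{1,+}_{2,1},1^{1,-}_{2,1}\colon\I\Cu\mathbb{F}\text{unct}\to(1,\infty)\text{-}\Cu\Trans$ send $F\colon C\to D$ to the two folded squares having $F$ on two adjacent edges and identities on the other two, again with identity mediating $2$-cell; these are precisely the cubical incarnation of the companion/folding squares of $F$.

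Next I would define the compositions. The operation $\circ^{1}_{1}$ on $\I\Cu\mathbb{F}\text{unct}$ is ordinary composition of cubical strict $\infty$-functors, which is strictly associative and unital with respect to $1^{0}_{1}$. The two operations $\circ^{2}_{1}$ and $\circ^{2}_{2}$ on $(1,\infty)\text{-}\Cu\Trans$ are the horizontal and the vertical pasting of natural transformation squares: $\circ^{2}_{1}$ glues two squares sharing an edge in direction $1$ and $\circ^{2}_{2}$ glues along a shared edge in direction $2$. That a glued square is again a natural $\infty$-transformation — i.e. that the mediating $1$-cell obtained by composing the two given mediating $1$-cells is natural for every $1$-cell $f$ — follows from pasting the naturality square of \ref{cub-trans}. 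Each operation is a functor because a morphism of natural transformations is a $3$-cube whose faces are the relevant squares, so pasting of $3$-cubes matches pasting of their boundary $2$-cubes.

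Finally I would verify the axioms of \ref{strict-cat}. The cubical source/target relations and the reflexivity and connection relations reduce to the bookkeeping of which of $F,G,H,K$ or an identity occupies each edge, and hold by construction; the axioms of positions and associativity hold because functor composition and square pasting are strict. The genuinely cubical content — and the step I expect to be the main obstacle — is the pair of transport laws together with the interchange law, which assert that pasting the connection and reflexivity squares in the prescribed $2\times 2$ patterns reproduces the connection of a composite. These have no globular analogue and must be checked by a direct diagram chase in $\CC$, using that all the mediating $2$-cells involved are identities, so that the matrices of \ref{strict-cat} collapse to commuting squares of functors. Once these are in place, the middle-four interchange between $\circ^{2}_{1}$ and $\circ^{2}_{2}$ is the classical Godement interchange for natural transformations, and the diagram is endowed with the desired strict cubical $2$-category structure.
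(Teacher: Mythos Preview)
Your proposal is correct and follows essentially the same route as the paper: both supply the reflexivities, the connections and the two pastings of transformation squares explicitly, and then relegate the axiom check to a routine verification. The paper is in fact terser than you on the axioms (it simply says ``left to the reader''), but more explicit than you on one point: it writes down the two composition formulas $(\rho\circ^{2}_{1,1}\tau)(a)=\rho(H(a))\circ G'(\tau(a))$ and $(\rho\circ^{2}_{1,2}\tau)(a)=K'(\tau(a))\circ\rho(F(a))$, which involve whiskering the given mediating $1$-cells by the appropriate functors before composing them --- your phrase ``the mediating $1$-cell obtained by composing the two given mediating $1$-cells'' slightly undersells this, though your reference to Godement interchange shows you have the right construction in mind.
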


\begin{proof}
\begin{description}

\item Consider the following object $\tau\in (1,\infty)\text{-}\Cu\Trans$ :

$$\begin{tikzcd}
C_{0,0} \arrow[rrrddd,phantom,"\tau\Downarrow"] \arrow[ddd,"H"{left}]\arrow[rrr,"F"] &&& C_{1,0}\arrow[ddd,"G"] \\\\\\
C_{0,1} \arrow[rrr,"K"{below}] &&&C_{1,1}
\end{tikzcd}$$
such that $\sigma^2_{1,1}(\tau)=F$, $\sigma^2_{1,2}(\tau)=H$, $\tau^2_{1,1}(\tau)=K$ and
$\tau^2_{1,2}(\tau)=G$, and such that $\sigma^1_{0}$ and $\tau^1_{0}$ are 
clearly defined.

\item \underline{Definition of the classical reflexivity} :

$$\begin{tikzcd}
   (1,\infty)\text{-}\Cu\Trans
      &&&& \I\Cu\mathbb{F}\text{unct} 
         \arrow[llll, yshift=-1ex,"1^{1}_{2,2}"]
      \arrow[llll, yshift=1ex,"1^{1}_{2,1}"{above}]   
        &&&&\arrow[llll,"1^{0}_{1}"{above}]        
        \Cu\mathbb{C}\text{AT}
\end{tikzcd}$$

$1^{1}_{2,1}(F)$ is given by 
$$\begin{tikzcd}
C_{0,0} \arrow[rrrddd,phantom,"1^{1}_{2,1}(F)\Downarrow"] 
\arrow[ddd,"1_{C_{0,0}}"{left}]\arrow[rrr,"F"] &&& C_{1,0}\arrow[ddd,"1_{C_{1,0}}"] \\\\\\
C_{0,0} \arrow[rrr,"F"{below}] &&&C_{1,0}
\end{tikzcd}$$
and is such that $1^{1}_{2,1}(F)(a)=1^{0}_{1}(F(a))$ for all $0$-cells 
$a\in C_{0,0}(0)$, 

and also $1^{1}_{2,2}(F)$ is given by 

$$\begin{tikzcd}
C_{0,0} \arrow[rrrddd,phantom,"1^{1}_{2,2}(F)\Downarrow"] 
\arrow[ddd,"F"{left}]\arrow[rrr,"1_{C_{0,0}}"] &&& C_{0,0}\arrow[ddd,"F"] \\\\\\
C_{1,0} \arrow[rrr,"1_{C_{1,0}}"{below}] &&&C_{1,0}
\end{tikzcd}$$
and is such that $1^{1}_{2,2}(F)(a)=1^{0}_{1}(F(a))$ for all $0$-cells 
$a\in C_{0,0}(0)$.

\item \underline{Definition of the connections} :

$$\begin{tikzcd}
   (1,\infty)\text{-}\Cu\Trans
      &&&& \I\Cu\mathbb{F}\text{unct} 
         \arrow[llll, yshift=-1ex,"1^{1,-}_{2,2}"]
      \arrow[llll, yshift=1ex,"1^{1,+}_{2,1}"{above}]   
\end{tikzcd}$$

$1^{1,-}_{2,1}(F)$ is given by 
$$\begin{tikzcd}
C_{0,0} \arrow[rrrddd,phantom,"1^{1,-}_{2,1}(F)\Downarrow"] 
\arrow[ddd,"F"{left}]\arrow[rrr,"F"] &&& C_{1,0}\arrow[ddd,"1_{C_{1,0}}"] \\\\\\
C_{1,0} \arrow[rrr,"1_{C_{1,0}}"{below}] &&&C_{1,0}
\end{tikzcd}$$
and is such that $1^{1,-}_{2,1}(F)(a)=1^{0}_{1}(F(a))$ for all $0$-cells 
$a\in C_{0,0}(0)$, 

and $1^{1,+}_{2,1}(F)$ is given by 
$$\begin{tikzcd}
C_{0,0} \arrow[rrrddd,phantom,"1^{1,+}_{2,1}(F)\Downarrow"] 
\arrow[ddd,"1_{C_{0,0}}"{left}]\arrow[rrr,"1_{C_{0,0}}"] &&& C_{0,0}\arrow[ddd,"F"] \\\\\\
C_{0,0}\arrow[rrr,"F"{below}] &&&C_{1,0}
\end{tikzcd}$$
and is such that $1^{1,+}_{2,1}(F)(a)=1^{0}_{1}(F(a))$ for all $0$-cells 
$a\in C_{0,0}(0)$.

\item The following shape of $2$-cells

$$\begin{tikzcd}
C_{0,0} \arrow[rrrddd,phantom,"\tau\Downarrow"] \arrow[ddd,"H"{left}]\arrow[rrr,"F"] &&& C_{1,0}\arrow[ddd,"G"] \\\\\\
C_{0,1} \arrow[rrrddd,phantom,"\rho\Downarrow"] \arrow[ddd,"H'"{left}]\arrow[rrr,"K"{below}] &&&C_{1,1}\arrow[ddd,"G'"]\\\\\\
C_{0,2} \arrow[rrr,"K'"{below}] &&&C_{1,2}
\end{tikzcd}$$
allows to define the composition $\rho\circ^{2}_{1,1}\tau$

$$\begin{tikzcd}
C_{0,0} \arrow[rrrddd,phantom,"\rho\circ^{2}_{1,1}\tau\Downarrow"] \arrow[ddd,"H'\circ H"{left}]\arrow[rrr,"F"] &&& C_{1,0}\arrow[ddd,"G'\circ G"] \\\\\\
C_{0,2} \arrow[rrr,"K"{below}] &&&C_{1,2}
\end{tikzcd}$$
by the formula : 
$$(\rho\circ^{2}_{1,1}\tau)(a)=\rho(H(a))\circ G'(\tau(a))$$

and the following shape of $2$-cells
$$\begin{tikzcd}
C_{0,0} \arrow[rrrddd,phantom,"\tau\Downarrow"] \arrow[ddd,"H"{left}]
\arrow[rrr,"F"] &&& C_{1,0}\arrow[ddd,"G"]\arrow[rrrddd,phantom,"\rho\Downarrow"]
\arrow[rrr,"F'"]&&&C_{2,0}\arrow[ddd,"G'"]\\\\\\
C_{0,1} \arrow[rrr,"K"{below}] &&&C_{1,1}\arrow[rrr,"K'"{below}] &&&C_{2,1}
\end{tikzcd}$$
allows to define the composition $\rho\circ^{2}_{1,2}\tau$

$$\begin{tikzcd}
C_{0,0} \arrow[rrrddd,phantom,"\rho\circ^{2}_{1,2}\tau\Downarrow"] \arrow[ddd,"H"{left}]\arrow[rrr,"F'\circ F"] &&& C_{2,0}\arrow[ddd,"G"] \\\\\\
C_{0,1} \arrow[rrr,"K'\circ K"{below}] &&&C_{2,1}
\end{tikzcd}$$
by the formula :
$$(\rho\circ^{2}_{1,2}\tau)(a)=K'(\tau(a))\circ\rho(F(a))$$

The proof that these datas put a structure of cubical strict $2$-categories on the internal $2$-cube
of the proposition is left to the reader.
\end{description}
\end{proof}

\subsubsection{The category of cubical $(1,\infty)$-magmas}
A cubical $(1,\infty)$-magma is an object with shape

$$\begin{tikzcd}
M_{0,0} \arrow[rrrddd,phantom,"\tau_M\Downarrow"] \arrow[ddd,"H_M"{left}]\arrow[rrr,"F_M"] &&& M_{1,0}\arrow[ddd,"G_M"] \\\\\\
M_{0,1} \arrow[rrr,"K_M"{below}] &&&M_{1,1}
\end{tikzcd}$$

such that $(M_{0,0},F_M,M_{1,0})$, $(M_{1,0},G_M,M_{1,1})$, $(M_{0,0},H_M,M_{0,1})$
and $(M_{0,1},K_M,M_{1,1})$ are objects of $(0,\infty)\text{-}\Cu\Mag_\text{r}$, and such that
$\tau_M$ is a map 
\begin{tikzcd}
M_{0,0}(0)\arrow[rrr,"\tau_M"]&&&M_{1,1}(1)
\end{tikzcd}
which sends each $0$-cells $a$ of $M_{0,0}$ to an $1$-cell $\tau_M(a)\in M_{1,1}(1)$ such that
$s^{1}_{0}(\tau_M(a))=G_M(F_M(a))$ and $t^{1}_{0}(\tau_M(a))=K_M(H_M(a))$. We want
to avoid heavy notations and shall denote usually just by $\tau_M$ such object of a category
$(1,\infty)\text{-}\Cu\Mag_\text{r}$, where we have to think this greek letter $\tau$ as the variable
 usually used for natural transformations and the subscript $M$ in it just means "Magmatic".

Given $\tau_M$ and $\tau'_M$ two objects of $(1,\infty)\text{-}\Cu\Mag_\text{r}$, a morphism
between them is given by a commutative diagram in $\I\Cu\E$

$$\begin{tikzcd}[row sep=scriptsize, column sep=scriptsize]
&& M'_{0,0} \arrow[rrrddd,phantom,"\tau'_M\Downarrow",pos=0.6] \arrow[rrr,"F'_M"] \arrow[ddd,"H'_M"] &&& M'_{1,0} \arrow[ddd,"G'_M"] \\ M_{0,0}\arrow[rrrddd,phantom,"\tau_M\Downarrow",pos=0.4] \arrow[rru,"m_{0,0}"] \arrow[rrr, "F_M",crossing over] \arrow[ddd,"H_M"{left}] & && M_{1,0} \arrow[rru,"m_{1,0}"]\\\\
&& M'_{0,1}\arrow[rrr,"K'_M"{below}] && & M'_{1,1} \\
M_{0,1}\arrow[rru,"m_{0,1}"] \arrow[rrr,"K_M"{below}] & && M_{1,1} \arrow[rru,"m_{1,1}"{below}] \arrow[from=uuu, "G_M", pos=0.4, crossing over]\\
\end{tikzcd}$$

such that $(m_{0,0},m_{1,0})$, $(m_{1,0},m_{1,1})$, $(m_{0,0},m_{0,1})$, $(m_{0,1},m_{1,1})$
are morphisms of $(0,\infty)\text{-}\Cu\Mag_\text{r}$. It is important to note that commutativity 
of this diagram means also the equality $m_{1,1}\circ\tau_M=\tau'_M\circ m_{0,0}$.

We obtain an internal $2$-cube in $\mathbb{C}\text{AT}$
\label{c2}
$$\begin{tikzcd}
   (1,\infty)\text{-}\Cu\Mag_\text{r} \arrow[rrrr, yshift=4ex,"\sigma^2_{1,1}"]   
   \arrow[rrrr, yshift=1ex,"\sigma^2_{1,2}"]
   \arrow[rrrr, yshift=-1ex,"\tau^2_{1,1}"{below}]
    \arrow[rrrr, yshift=-4ex,"\tau^2_{1,2}"{below}]
      &&&& (0,\infty)\text{-}\Cu\Mag_\text{r} 
      \arrow[rrrr, yshift=1ex,"\sigma^1_0"]
      \arrow[rrrr, yshift=-1ex,"\tau^1_0"{below}]   
        &&&&\I\Cu\Mag_\text{r}
 \end{tikzcd}$$

\begin{proposition}
The internal $2$-cube of $\mathbb{C}\text{AT}$ just above can be structured 
in a cubical reflexive $2$-magma
\end{proposition}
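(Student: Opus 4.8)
The plan is to transport, essentially verbatim, the constructions from the proof of Proposition~\ref{deux-cubes} from the strict cubical setting to the present magmatic one, the crucial simplification being that a cubical reflexive $2$-magma requires only the source/target maps, the reflexions, the connections and the two partial compositions $\circ^2_{1,1},\circ^2_{1,2}$, subject to the axioms of positions of \ref{strict-infty}, but \emph{no} associativity, interchange, or transport laws. The source and target maps $\sigma^2_{1,1},\sigma^2_{1,2},\tau^2_{1,1},\tau^2_{1,2}$ at level $2$ and $\sigma^1_0,\tau^1_0$ at level $1$ are already given by the displayed internal $2$-cube, sending an object $\tau_M$ to its bounding magmatic functors $F_M,H_M,K_M,G_M$. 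First I would record that these assignments are functors of the respective magma categories, which is immediate from the definition of a morphism of $(1,\infty)\text{-}\Cu\Mag_\text{r}$.

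Next I would define the reflexive structure. For the classical reflexions I send a $(0,\infty)$-magma $F_M\colon M_0\to M_1$ to the degenerate squares $1^1_{2,1}(F_M)$ and $1^1_{2,2}(F_M)$ obtained exactly as in Proposition~\ref{deux-cubes}, with identity magmatic functors on the appropriate edges and component map $a\mapsto 1^0_1(F_M(a))$ given by the one-dimensional reflexion $1^0_1$ of the target magma; the connections $1^{1,-}_{2,1}(F_M)$ and $1^{1,+}_{2,1}(F_M)$ are defined by the same four diagrams, again with components $1^0_1(F_M(a))$. One checks that each of these assignments is a functor and that their sources and targets reproduce the reflexivity and connection relations of $\Csr$ (resp.\ $\Cr$) listed in \ref{reflexive_cubical_sets}.

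The two partial compositions are then defined by the magmatic analogues of the formulas of Proposition~\ref{deux-cubes}: for composable $2$-cells $\tau_M,\rho_M$ I set
\[(\rho_M\circ^2_{1,1}\tau_M)(a)=\rho_M(H_M(a))\circ G'_M(\tau_M(a)),\qquad (\rho_M\circ^2_{1,2}\tau_M)(a)=K'_M(\tau_M(a))\circ\rho_M(F_M(a)),\]
where $\circ$ is the one-dimensional composition of the relevant underlying reflexive $\infty$-magma. Here the only genuine verification is well-definedness: using $s^1_0(\tau_M(a))=G_M(F_M(a))$ and $t^1_0(\tau_M(a))=K_M(H_M(a))$ (and the corresponding data for $\rho_M$), together with the fact that $G'_M,K'_M$ are morphisms of $(0,\infty)\text{-}\Cu\Mag_\text{r}$ and hence preserve $s^1_0$ and $t^1_0$, one confirms that the two factors are $\circ$-composable in the appropriate level-$0$ magma and that the result is again an object of $(1,\infty)\text{-}\Cu\Mag_\text{r}$.

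The final, and main, step is to verify the axioms of positions, i.e.\ that $\sigma^2_{1,j}$ and $\tau^2_{1,j}$ interact with $\circ^2_{1,1},\circ^2_{1,2}$ as prescribed; this reduces to the bookkeeping that the outer bounding functors of each composite square are precisely the composites of the corresponding bounding functors of the factors, which is exactly what the defining formulas arrange. I expect this bookkeeping — matching each boundary functor of a composite to the correct composite of boundary functors, and confirming that $\circ^2_{1,1},\circ^2_{1,2}$ are themselves operations internal to $\mathbb{C}\text{AT}$ (i.e.\ functorial in morphisms of the three magma categories) — to be the only real obstacle; it is, however, entirely parallel to the corresponding check in Proposition~\ref{deux-cubes}, with all of the harder strict-$2$-category identities now simply dropped.
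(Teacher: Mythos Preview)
Your proposal is correct and follows the same approach as the paper: the paper's own proof is the single sentence ``The proof is easy and basic datas have been already defined in \ref{deux-cubes},'' and you have simply spelled out what this means---transporting the source/target, reflexion, connection and composition formulas from Proposition~\ref{deux-cubes} verbatim to the magmatic setting and observing that only the axioms of positions survive as obligations. One minor wording quibble: when you justify composability you say ``$G'_M,K'_M$ are morphisms of $(0,\infty)\text{-}\Cu\Mag_\text{r}$''; strictly speaking they are the underlying $\Cu\E$-morphisms constituting \emph{objects} of that category, but this is exactly what you need since cubical-set morphisms preserve $s^1_0,t^1_0$.
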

\begin{proof}
The proof is easy and basic datas have been already defined in \ref{deux-cubes}.
\end{proof}

\subsubsection{The category of cubical $(1,\infty)$-categorical stretchings}

A cubical $(1,\infty)$-categorical stretching is given by a commutative diagram in $\I\Cu\E$ :

$$\begin{tikzcd}[row sep=scriptsize, column sep=scriptsize]
&& M_{0,1}\arrow[rrr,"K_M"] \arrow[ddd,"\pi_{0,1}"] &&& M_{1,1}\arrow[ddd,"\pi_{1,1}"] \\ M_{0,0}\arrow[rrrrru,phantom,"\tau_M\Rightarrow"]
\arrow[rru,"H_M"] \arrow[rrr, "F_M",crossing over] \arrow[ddd,"\pi_{0,0}"{left}] & && M_{1,0} \arrow[rru,"G_M"]\\\\
&& C_{0,1} \arrow[rrr,"K_C"] && & C_{1,1} \\
C_{0,0} \arrow[rrrrru,phantom,"\tau_C\Rightarrow"]\arrow[rru,"H_C"] \arrow[rrr,"F_C"{below}] & && C_{1,0} \arrow[rru,"G_C"{below}] \arrow[from=uuu, "\pi_{1,0}", pos=0.3, crossing over]\\
\end{tikzcd}$$

such that $(\pi_{0,0},F_M,F_C,\pi_{1,0})$, $(\pi_{1,0},G_M,G_C,\pi_{1,1})$, 
$(\pi_{0,0},H_M,H_C,\pi_{0,1})$ and $(\pi_{0,1},K_M,K_C,\pi_{1,1})$ are 
objects of $(0,\infty)\text{-}\Cu\mathbb{E}\text{tC}$, and also $\tau_M$ is an object
of $(1,\infty)\text{-}\Cu\Mag_\text{r}$ and $\tau_C$ is an object of
$(1,\infty)\text{-}\Cu\Trans$. It is important to note that commutativity 
of this diagram means also that the equality $\pi_{1,1}\circ\tau_M=\tau_C\circ\pi_{0,0}$ holds.
Such cubical $(1,\infty)$-categorical stretching can be denoted $(\tau_M,\tau_C)$. Given
an other cubical $(1,\infty)$-categorical stretching $(\tau'_M,\tau'_C)$ :

$$\begin{tikzcd}[row sep=scriptsize, column sep=scriptsize]
&& M'_{0,1}\arrow[rrr,"K'_M"] \arrow[ddd,"\pi'_{0,1}"] &&& M'_{1,1}\arrow[ddd,"\pi'_{1,1}"] \\ M'_{0,0}\arrow[rrrrru,phantom,"\tau'_M\Rightarrow"]
\arrow[rru,"H'_M"] \arrow[rrr, "F'_M",crossing over] \arrow[ddd,"\pi'_{0,0}"{left}] & && M'_{1,0} \arrow[rru,"G'_M"]\\\\
&& C'_{0,1} \arrow[rrr,"K'_C"] && & C'_{1,1} \\
C'_{0,0} \arrow[rrrrru,phantom,"\tau'_C\Rightarrow"]\arrow[rru,"H'_C"] \arrow[rrr,"F'_C"{below}] & && C'_{1,0} \arrow[rru,"G'_C"{below}] \arrow[from=uuu, "\pi'_{1,0}", pos=0.3, crossing over]\\
\end{tikzcd}$$

a morphism $\xymatrix{(\tau_M,\tau_C)\ar[r]^{}&(\tau'_M,\tau'_C)}$ of such cubical $(1,\infty)$-categorical stretchings is given by

\begin{itemize}
\item a morphism of $(1,\infty)\text{-}\Cu\Mag_\text{r}$ underlied by $(m_{0,0},m_{1,0},m_{0,1},m_{1,1})$, a
morphism of $(1,\infty)\text{-}\Cu\Trans$ underlied by $(c_{0,0},c_{1,0},c_{0,1},c_{1,1})$ :

$$\begin{tikzcd}[row sep=scriptsize, column sep=scriptsize]
&& M'_{0,0} \arrow[rrrddd,phantom,"\tau'_M\Downarrow",pos=0.6] \arrow[rrr,"F'_M"] \arrow[ddd,"H'_M"] &&& M'_{1,0} \arrow[ddd,"G'_M"] \\ M_{0,0}\arrow[rrrddd,phantom,"\tau_M\Downarrow",pos=0.4] \arrow[rru,"m_{0,0}"] \arrow[rrr, "F_M",crossing over] \arrow[ddd,"H_M"{left}] & && M_{1,0} \arrow[rru,"m_{1,0}"]\\\\
&& M'_{0,1}\arrow[rrr,"K'_M"{below}] && & M'_{1,1} \\
M_{0,1}\arrow[rru,"m_{0,1}"] \arrow[rrr,"K_M"{below}] & && M_{1,1} \arrow[rru,"m_{1,1}"{below}] \arrow[from=uuu, "G_M", pos=0.4, crossing over]\\
\end{tikzcd}\qquad \begin{tikzcd}[row sep=scriptsize, column sep=scriptsize]
&& C'_{0,0} \arrow[rrrddd,phantom,"\tau'_C\Downarrow",pos=0.6] \arrow[rrr,"F'_C"] \arrow[ddd,"H'_C"] &&& C'_{1,0} \arrow[ddd,"G'_C"] \\ C_{0,0}\arrow[rrrddd,phantom,"\tau_C\Downarrow",pos=0.4] \arrow[rru,"c_{0,0}"] \arrow[rrr, "F_C",crossing over] \arrow[ddd,"H_C"{left}] & && C_{1,0} \arrow[rru,"c_{1,0}"]\\\\
&& C'_{0,1}\arrow[rrr,"K'_C"{below}] && & C'_{1,1} \\
C_{0,1}\arrow[rru,"c_{0,1}"] \arrow[rrr,"K_C"{below}] & && C_{1,1} \arrow[rru,"c_{1,1}"{below}] \arrow[from=uuu, "G_C", pos=0.4, crossing over]\\
\end{tikzcd}$$

\item the following morphisms : $((m_{0,0},c_{0,0}),(m_{1,0},c_{1,0}))$, 
$((m_{1,0},c_{1,0}),(m_{1,1},c_{1,1}))$, $((m_{0,0},c_{0,0}),(m_{0,1},c_{0,1}))$ 
and $((m_{0,1},c_{0,1}),(m_{1,1},c_{1,1}))$, of
$(0,\infty)\text{-}\Cu\mathbb{E}\text{tC}$

$$\begin{tikzcd}[row sep=scriptsize, column sep=scriptsize]
&& M'_{0,0}\arrow[rrr,"F'_M"] \arrow[ddd,"\pi'_{0,0}"] &&& M'_{1,0}\arrow[ddd,"\pi'_{1,0}"] \\ M_{0,0}
\arrow[rru,"m_{0,0}"] \arrow[rrr, "F_M",crossing over] \arrow[ddd,"\pi_{0,0}"{left}] & && M_{1,0} \arrow[rru,"m_{1,0}"]\\\\
&& C'_{0,0} \arrow[rrr,"F'_C"] && & C'_{1,0} \\
C_{0,0} \arrow[rru,"c_{0,0}"]\arrow[rrr,"F_C"{below}] & && C_{1,0} \arrow[rru,"c_{1,0}"{below}] \arrow[from=uuu, "\pi_{1,0}", pos=0.3, crossing over]\\
\end{tikzcd}\qquad  \begin{tikzcd}[row sep=scriptsize, column sep=scriptsize]
&& M'_{0,0}\arrow[rrr,"H'_M"] \arrow[ddd,"\pi'_{0,0}"] &&& M'_{0,1}\arrow[ddd,"\pi'_{0,1}"] \\ M_{0,0}
\arrow[rru,"m_{0,0}"] \arrow[rrr, "H_M",crossing over] \arrow[ddd,"\pi_{0,0}"{left}] & && M_{0,1} \arrow[rru,"m_{0,1}"]\\\\
&& C'_{0,0} \arrow[rrr,"H'_C"] && & C'_{0,1} \\
C_{0,0} \arrow[rru,"c_{0,0}"]\arrow[rrr,"H_C"{below}] & && C_{0,1} \arrow[rru,"c_{0,1}"{below}] \arrow[from=uuu, "\pi_{0,1}", pos=0.3, crossing over]\\
\end{tikzcd}$$

 $$\begin{tikzcd}[row sep=scriptsize, column sep=scriptsize]
&& M'_{1,0}\arrow[rrr,"G'_M"] \arrow[ddd,"\pi'_{1,0}"] &&& M'_{1,1}\arrow[ddd,"\pi'_{1,1}"] \\ M_{1,0}
\arrow[rru,"m_{1,0}"] \arrow[rrr, "G_M",crossing over] \arrow[ddd,"\pi_{1,0}"{left}] & && M_{1,1} \arrow[rru,"m_{1,1}"]\\\\
&& C'_{1,0} \arrow[rrr,"G'_C"] && & C'_{1,1} \\
C_{1,0} \arrow[rru,"c_{1,0}"]\arrow[rrr,"G_C"{below}] & && C_{1,1} \arrow[rru,"c_{1,1}"{below}] \arrow[from=uuu, "\pi_{1,1}", pos=0.3, crossing over]\\
\end{tikzcd}\qquad \begin{tikzcd}[row sep=scriptsize, column sep=scriptsize]
&& M'_{0,1}\arrow[rrr,"K'_M"] \arrow[ddd,"\pi'_{0,1}"] &&& M'_{1,1}\arrow[ddd,"\pi'_{1,1}"] \\ M_{0,1}
\arrow[rru,"m_{0,1}"] \arrow[rrr, "K_M",crossing over] \arrow[ddd,"\pi_{0,1}"{left}] & && M_{1,1} \arrow[rru,"m_{1,1}"]\\\\
&& C'_{0,1} \arrow[rrr,"K'_C"] && & C'_{1,1} \\
C_{0,1} \arrow[rru,"c_{0,1}"]\arrow[rrr,"K_C"{below}] & && C_{1,1} \arrow[rru,"c_{1,1}"{below}] \arrow[from=uuu, "\pi_{1,1}", pos=0.3, crossing over]\\
\end{tikzcd}$$
\end{itemize}

We denote $(1,\infty)\text{-}\Cu\mathbb{E}\text{tC}$ the category of cubical $(1,\infty)$-categorical stretchings. Now we have a forgetful functor :

$$\xymatrix{(1,\infty)\text{-}\Cu\mathbb{E}\text{tC}\ar[rr]^{U_{}}&&(\Cu\E)^4}$$
which sends the object $(\tau_M,\tau_C)$ to the object $(M_{0,0},M_{1,0},M_{0,1},M_{1,1})$.

This
functor has a left adjoint which produces a monad
$\mathbb{T}^1=(T^1,\lambda^1,\mu^1)$ on the category $(\Cu\E)^4$.
\begin{definition}
  Cubical weak natural $\infty$-transformations are algebras for the monad $\mathbb{T}^1$ above.
\end{definition}

Thus we obtain a $2$-cube in the category $\mathbb{A}\text{dj}$ of pairs
of adjunctions defined in \cite{kamel}

\label{c3}
$$
\begin{tikzcd}
(1,\infty)\text{-}\Cu\mathbb{E}\text{tC}\arrow[rrrr, yshift=4ex,"\sigma^2_{1,1}"]
   \arrow[dddddd,xshift=1ex,"\dashv"{left},"U"]   
   \arrow[rrrr, yshift=1ex,"\sigma^2_{1,2}"]
   \arrow[rrrr, yshift=-1ex,"\tau^2_{1,1}"{below}]
    \arrow[rrrr, yshift=-4ex,"\tau^2_{1,2}"{below}]
      &&&&(0,\infty)\text{-}\Cu\mathbb{E}\text{tC}
    \arrow[dddddd,xshift=1ex,"\dashv"{left},"U"] 
      \arrow[rrrr, yshift=1ex,"\sigma^1_0"]
      \arrow[rrrr, yshift=-1ex,"\tau^1_0"{below}]   
        &&&&\I\mathbb{C}\mathbb{E}\text{tC}
             \arrow[dddddd,xshift=1ex,"\dashv"{left},"U"]
     \\\\\\\\\\\\
 (\CS)^4\arrow[rrrr, yshift=4ex,"\sigma^2_{1,1}"]
 \arrow[uuuuuu,xshift=-1ex,"F"] 
    \arrow[rrrr, yshift=1ex,"\sigma^2_{1,2}"]
   \arrow[rrrr, yshift=-1ex,"\tau^2_{1,1}"{below}]
    \arrow[rrrr, yshift=-4ex,"\tau^2_{1,2}"{below}]
      &&&&(\CS)^2
  \arrow[uuuuuu,xshift=-1ex,"F"]     
      \arrow[rrrr, yshift=1ex,"\sigma^1_0"]
      \arrow[rrrr, yshift=-1ex,"\tau^1_0"{below}]   
        &&&&\CS
  \arrow[uuuuuu,xshift=-1ex,"F"] 
 \end{tikzcd}$$
 
 which allow to obtain a $2$-cocube in the category $\mathbb{M}\text{nd}$ of categories equipped with monads defined in \cite{kamel} 
 $$
\begin{tikzcd}
  ((\CS)^4,\mathbb{T}^1) 
      &&&&((\CS)^2,\mathbb{T}^0) \arrow[llll, yshift=4ex,"\sigma^2_{1,1}"{above}]   
   \arrow[llll, yshift=1ex,"\sigma^2_{1,2}"{above}]
   \arrow[llll, yshift=-1ex,"\tau^2_{1,1}"]
    \arrow[llll, yshift=-4ex,"\tau^2_{1,2}"]      
      &&&&(\CS,\mathbb{W}) 
      \arrow[llll, yshift=1ex,"\sigma^1_0"{above}]
      \arrow[llll, yshift=-1ex,"\tau^1_0"]   
 \end{tikzcd}$$
 
 And finally it gives the following $2$-cube in $\mathbb{C}\text{AT}$
 
 $$
\begin{tikzcd}
  \mathbb{T}^1\text{-}\mathbb{A}\text{lg}  \arrow[rrrr, yshift=4ex,"\sigma^2_{1,1}"]   
   \arrow[rrrr, yshift=1ex,"\sigma^2_{1,2}"]
   \arrow[rrrr, yshift=-1ex,"\tau^2_{1,1}"{below}]
    \arrow[rrrr, yshift=-4ex,"\tau^2_{1,2}"{below}]
      &&&& \mathbb{T}^0\text{-}\mathbb{A}\text{lg}
      \arrow[rrrr, yshift=1ex,"\sigma^1_0"]
      \arrow[rrrr, yshift=-1ex,"\tau^1_0"{below}]   
        &&&&\mathbb{W}\text{-}\mathbb{A}\text{lg}
 \end{tikzcd}$$ 

\begin{proposition}
The internal $2$-cube of $\mathbb{C}\text{AT}$ just above can be structured 
in a cubical weak $2$-category
\end{proposition}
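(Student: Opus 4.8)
The plan is to follow the same recipe that produced the strict cubical $2$-category of Proposition~\ref{deux-cubes}, but now at the level of the algebra categories, and to let the monadic machinery of Section~\ref{magma} supply the compatibility with the weak structure. First I would record where the present $2$-cube comes from: it is the image, under the ``category of algebras'' assignment $\mathbb{A}\text{lg}(-)$, of the $2$-cocube in $\mathbb{M}\text{nd}$ exhibited just above, which in turn arises from the $2$-cube in $\mathbb{A}\text{dj}$ of cubical $(1,\infty)$-, $(0,\infty)$- and $\infty$-categorical stretchings. Since each passage $\mathbb{A}\text{dj}\to\mathbb{M}\text{nd}\to\mathbb{C}\text{AT}$ is functorial and carries the source/target functors $\sigma^{2}_{1,k},\tau^{2}_{1,k},\sigma^{1}_{0},\tau^{1}_{0}$ to the corresponding source/target functors of the final cube, it suffices to produce the cubical operations on $\mathbb{T}^{1}\text{-}\mathbb{A}\text{lg}\to\mathbb{T}^{0}\text{-}\mathbb{A}\text{lg}\to\mathbb{W}\text{-}\mathbb{A}\text{lg}$ and to check they are compatible with the monadic structure already transported along this chain.

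Next I would define the reflexivities, the connections and the two compositions on the three algebra categories as the evident weak analogues of the formulas written in the proof of Proposition~\ref{deux-cubes}. A cubical weak $\infty$-category $C\in\mathbb{W}\text{-}\mathbb{A}\text{lg}$ is sent by $1^{0}_{1}$ to the identity cubical weak $\infty$-functor on $C$; a cubical weak $\infty$-functor is sent by $1^{1}_{2,1},1^{1}_{2,2}$ and by the connections $1^{1,\pm}_{2,1}$ to the whiskered identity transformation-algebras exactly as in the strict picture; and the horizontal and vertical composites of two composable cubical weak natural $\infty$-transformations are defined by the same whiskering formulas $(\rho\circ^{2}_{1,1}\tau)(a)=\rho(H(a))\circ G'(\tau(a))$ and $(\rho\circ^{2}_{1,2}\tau)(a)=K'(\tau(a))\circ\rho(F(a))$, now interpreted inside the underlying cubical weak $\infty$-categories rather than strict ones.

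To see that these assignments are well defined and that composites of $\mathbb{T}^{1}$-algebras are again $\mathbb{T}^{1}$-algebras, I would invoke the magmatic properties of Section~\ref{magma}: each composite and reflexion is built from the monad units $\lambda^{i}$, the multiplications $\mu^{i}$ and the structural morphisms via expressions of the form $v(\eta(-)\star\eta(-))$, and the computation given there shows that such maps are automatically morphisms of the underlying cubical $\infty$-magmas. The weakening of the axioms — the two transport laws and the interchange law relating $\circ^{2}_{1,1}$ and $\circ^{2}_{1,2}$ — is then witnessed not by strict equalities but by the bracketing $2$-cells $[-;-]^{1}_{2,j}$ and $[-;-]^{1,\gamma}_{2,j}$ supplied by the $(1,\infty)$-categorical stretchings, precisely as the sample $2$-cells displayed after Definition~\ref{weak-def} witness weak associativity inside a single $\mathbb{W}$-algebra.

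The hard part will be the bookkeeping of this last step: one must verify that the reflexivity, connection and composition assignments lift to genuine \emph{functors} between the algebra categories, respecting the algebra structure maps and not merely the underlying cubical sets, and that the complete list of cubical $2$-category axioms holds up to the above bracketing cells in a coherent fashion. This is the exact weak counterpart of the verification that was ``left to the reader'' in Proposition~\ref{deux-cubes}; it is routine but lengthy, the only genuinely new ingredient beyond that proof being the magmatic computation of Section~\ref{magma} that guarantees compatibility of the operations with the monads $\mathbb{W},\mathbb{T}^{0},\mathbb{T}^{1}$.
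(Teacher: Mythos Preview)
Your proposal is correct and takes essentially the same approach as the paper: the paper's own proof consists only of the remark that the details are ``quite long but not difficult'' and that the basic data are ``similar to those build in~\ref{deux-cubes}'', which is precisely the recipe you have spelled out --- lift the reflexivities, connections and compositions of Proposition~\ref{deux-cubes} to the algebra categories, and use the bracketing cells of the stretchings to witness the weak axioms. If anything, you have supplied more of the argument than the paper does, in particular the observation that the magmatic calculation of Section~\ref{magma} is what guarantees that the composite and reflexion formulas are compatible with the $\mathbb{T}^{1}$-, $\mathbb{T}^{0}$- and $\mathbb{W}$-algebra structures.
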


\begin{proof}
Detail of the proof is quite long but is not difficult. For example basic datas of such structure are similar 
to those build in \ref{deux-cubes}.
\end{proof}
\label{c4}
We finish this article by drawing the cocubical shape of monads for all cubical weak higher transformations 
that we hope to describe in a future work. For clarity we change the denotation of the monads
$\mathbb{W}$, $\mathbb{T}^0$ and $\mathbb{T}^1$ described in this article with : 
$\mathbb{W}^0:=\mathbb{W}$, $\mathbb{W}^1:=\mathbb{T}^0$ and 
$\mathbb{W}^2:=\mathbb{T}^1$ :

$$\begin{tikzcd}
 \mathbb{W}^0 \arrow[rr, yshift=1.5ex,"s^{0}_{1}"]
 \arrow[rr, yshift=-1.5ex,"t^{0}_{1}"{below}]
 &&\mathbb{W}^1 
\arrow[rr, yshift=1.5ex,"s^{2}_{1,1}"]
\arrow[rr, yshift=-1.5ex,"t^{2}_{1,1}"{below}]
\arrow[rr, yshift=4.5ex,"s^{2}_{1,2}"] 
\arrow[rr, yshift=-4.5ex,"t^{2}_{1,2}"{below}]   
        && \mathbb{W}^2
         \arrow[rr, yshift=1.5ex,"s^{3}_{2,1}"]
        \arrow[rr, yshift=-1.5ex,"t^{3}_{2,1}"{below}]
        \arrow[rr, yshift=4.5ex,"s^{3}_{2,2}"] 
        \arrow[rr, yshift=-4.5ex,"t^{3}_{2,2}"{below}]
        \arrow[rr, yshift=7.5ex,"s^{3}_{2,3}"] 
        \arrow[rr, yshift=-7.5ex,"t^{3}_{2,3}"{below}]&& 
         \mathbb{W}^{3} 
        \arrow[rr, yshift=1.5ex,"s^{4}_{3,4}"]
        \arrow[rr, yshift=-1.5ex,"t^{4}_{3,4}"{below}]
        \arrow[rr, yshift=4.5ex,"s^{4}_{3,3}"] 
        \arrow[rr, yshift=-4.5ex,"t^{4}_{3,3}"{below}]
        \arrow[rr, yshift=7.5ex,"s^{4}_{3,2}"] 
        \arrow[rr, yshift=-7.5ex,"t^{4}_{3,2}"{below}]        
        \arrow[rr, yshift=10.5ex,"s^{4}_{3,1}"] 
        \arrow[rr, yshift=-10.5ex,"t^{4}_{3,1}"{below}] && 
        \mathbb{W}^{4}\cdots \mathbb{W}^{n-1}  
\arrow[rr, yshift=1.5ex,"s^{n}_{n-1,n-1}"]
        \arrow[rr, yshift=-1.5ex,"t^{n}_{n-1,n-1}"{below}]
        \arrow[rr, yshift=4.5ex,dotted] 
        \arrow[rr, yshift=-4.5ex,dotted]
        \arrow[rr, yshift=7.5ex,"s^{n}_{n-1,i}"] 
        \arrow[rr, yshift=-7.5ex,"t^{n}_{n-1,i}"{below}]        
        \arrow[rr, yshift=10.5ex,dotted] 
        \arrow[rr, yshift=-10.5ex,dotted]    
  \arrow[rr, yshift=13.5ex,"s^{n}_{n-1,1}"] 
        \arrow[rr, yshift=-13.5ex,"t^{n}_{n-1,1}"{below}]&& \mathbb{W}^{n}\cdots    
  \end{tikzcd}$$
For example $\mathbb{W}^3$-algebras are cubical weak $\infty$-modifications\footnote{Its globular analogue has
been described in \cite{kamel}.}. This cocubical object of monads should be a cocubical object of operads \cite{camark-cub-2} 
and we believe that it is in fact a $\mathbb{W}^0$-coalgebra or at least a $\mathbb{B}^0_C$-coalgebra in the sense of
\cite{camark-cub-1} where $\mathbb{B}^0_C$-algebras are the operadic models of cubical weak $\infty$-categories
described in \cite{camark-cub-1}.


\bigbreak{}
\begin{minipage}{1.0\linewidth}
  Camell \textsc{Kachour}\\
  Laboratoire de Mathématique d'Orsay\\
  Equipe d'Arithmétique et de Géométrie Algébrique,
  CNRS,
  France.\\
  Phone: 00 33760407993\\
  Email:\href{mailto:camell.kachour@gmail.com}{\url{camell.kachour@gmail.com}}
\end{minipage}

\end{document}